\newtheorem{theorem}{Theorem}[section]
\newtheorem{lemma}[theorem]{Lemma}
\newtheorem{prop}[theorem]{Proposition}
\theoremstyle{definition}
\newtheorem{definition}[theorem]{Definition}
\newtheorem{example}[theorem]{Example}
\newtheorem{cor}[theorem]{Corollary}
\newtheorem{conj}[theorem]{Conjecture}
\newtheorem{theoremx}{Theorem}
\theoremstyle{remark}
\newtheorem{remark}[theorem]{Remark}
\numberwithin{equation}{section}
\newcommand{\defeq}{\vcentcolon=}
\newcommand{\cl}{{\rm cl}}
\newcommand{\scl}{{\rm scl}}
\newcommand{\pscl}{{\rm pscl}}
\newcommand{\cone}{{\rm cone}}
\newcommand{\conv}{{\rm conv}}
\newcommand{\supp}{{\rm supp}}
\begin{document}
	\title{Scl in free products}
%	\author{Danny Calegari}
%	\address{Department of Mathematics, University of Chicago, Chicago, Illinois, 60637}
%	\email{dannyc@math.uchicago.edu}
	\author{Lvzhou Chen}
	\address{Department of Mathematics, University of Chicago, Chicago, Illinois, 60637}
	\email{lzchen@math.uchicago.edu}
	
	%    General info
	%\subjclass[2000]{Primary 54C40, 14E20; Secondary 46E25, 20C20}
	
	\date{July 17, 2017. v3.0}
	
	%\dedicatory{This paper is dedicated to our advisors.}
	
	%\keywords{Differential geometry, algebraic geometry}
	\begin{abstract}
		We study stable commutator length (scl) in free products via surface maps into a wedge of spaces. We prove that scl is piecewise rational linear if it vanishes on each factor of the free product, generalizing the main result in \cite{DSSS}. We further prove that the property of isometric embedding with respect to scl is preserved under taking free products. The method of proof gives a way to compute scl in free products which lets us generalize and derive in a new way several well-known formulas. Finally we show independently and in a new approach that scl in free products of cyclic groups behaves in a piecewise quasi-rational way when the word is fixed but the orders of factors vary, previously proved by Timothy Susse, settling a conjecture of Alden Walker.
	\end{abstract}
	\maketitle

	\section{Introduction}
	Let $G$ be a group and $g$ be an element of the commutator subgroup $[G,G]$, the \emph{commutator length} of $g$, denoted $\cl(g)$, is the minimal number $n$ such that \linebreak $g=[a_1,b_1][a_2,b_2]\cdots[a_n,b_n]$ for some $a_i,b_i\in G$, and the \textit{stable commutator length} of $g$, denoted $\scl(g)$, is the limit $\lim_{n\to\infty}\cl(g^n)/n$ which always exists by subadditivity.
	
	It is obvious from the definition that $\scl$ has the following basic properties:
	\begin{enumerate}
		\item monotone: for any homomorphism $\phi:G\to H$ and $g\in [G,G]$, we have $\scl_G(g)\ge\scl_H(\phi(g))$;
		\item characteristic: for any $\phi\in{\rm Aut}(G)$ and $g\in [G,G]$, $\scl(g)=\scl(\phi(g))$.
	\end{enumerate}
	
	It follows that the \emph{spectrum}, the set of values that $\scl_G$ takes, is a group invariant. However, $\scl$ is notoriously difficult to compute unless it is known to vanish. Thus many interesting questions about the spectrum are extremely hard to answer.
	
	\subsection{Main results}
	Gromov \cite{Gro} asked whether the spectrum is rational (or perhaps algebraic) when $G$ is finitely presented. A counter-example was found by Zhuang \cite{ZIRRSCL}. On the other hand, Calegari \cite{DFPQL} showed that $\scl$ is rational and can be computed efficiently in a free group by interpreting and studying $\scl$ in terms of surface maps. He later showed in \cite{DSSS} that a modification of the geometric argument proves rationality of scl in free products of abelian groups. We generalize this latter result, substantially weakening the assumption that the factors are abelian.
	\begin{theoremx}[Rationality]\label{rationality}
		Let $G=*_\lambda G_\lambda$ with $\scl_{G_\lambda}\equiv0$ for each $\lambda$, then $\scl$ is piecewise rational linear in $G$.
	\end{theoremx}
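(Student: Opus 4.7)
The plan is to extend Calegari's geometric computation of $\scl$ via admissible surfaces (as in \cite{DFPQL,DSSS}) from free products of abelian factors to the present setting, using $\scl_{G_\lambda}\equiv 0$ to replace the ``disk pieces'' of the abelian case by arbitrary factor pieces whose Euler characteristic can still be controlled linearly. First I realize $K(G,1)$ as the wedge $X=\bigvee_\lambda K(G_\lambda,1)$ with wedge point $\ast$, and write $\scl(c)=\inf -\chi^-(S)/(2n(S))$ over admissible maps $f\colon(S,\partial S)\to(X,\ast)$ of degree $n(S)$. After a generic perturbation of $f$ and the usual simplifications (compressing inessential preimage circles and removing disk/sphere components of $S$), I may assume $\Gamma\defeq f^{-1}(\ast)$ is a disjoint union of properly embedded essential arcs. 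Cutting $S$ along $\Gamma$ decomposes it into \emph{factor pieces} $\{P_i\}$, each mapping into a single $K(G_\lambda,1)$ and with boundary alternating between arcs of $\partial S$ (labelled by syllables in the normal form of $c$) and arcs of $\Gamma$.

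Next, encode the decomposition by a vector $v$ in a finite-dimensional rational vector space recording the multiplicities of each piece-type (a factor index $\lambda$ together with the cyclic sequence of labels around $\partial P_i$) and the pairing of cut arcs. Gluing compatibility and the requirement that the syllable-arcs reassemble into $n$ copies of $c$ are homogeneous rational linear conditions, so the realizable $v$ form a rational polyhedral cone $\mathcal{C}(c)$. The identity $\chi(S)=\sum_i\chi(P_i)-k$, with $k=\abs{\Gamma}$, rewrites $-\chi(S)/(2n)$ as $k/(2n)-\sum_i\chi(P_i)/(2n)$; the first summand is rational linear in $v$. For the second, group pieces by factor: $C_\lambda\defeq\sum_{P_i\to K(G_\lambda,1)}\partial P_i$ is a null-homologous chain in $G_\lambda$ depending linearly on $v$, and the hypothesis $\scl_{G_\lambda}(C_\lambda)=0$ asserts precisely that the extremal Euler characteristic per unit boundary of any surface in $K(G_\lambda,1)$ bounding $C_\lambda$ vanishes. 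Hence, after passing to a large common multiple of $v$ and replacing the union of $\lambda$-pieces by an extremal surface admissible for $C_\lambda$, the contribution $\sum_i\chi(P_i)/(2n)$ reduces in the limit to a rational linear ``trivial-piece bonus'' $\ell(v)$ counting those pieces whose boundary word is trivial in $G_\lambda$.

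Combining, $\scl(c)=\inf_{v\in\mathcal{C}(c)}\bigl(k(v)/(2n(v))-\ell(v)\bigr)$ is the value of a rational linear program on a rational polyhedral cone, hence rational and attained; since $c\mapsto(\mathcal{C}(c),k,\ell,n)$ varies rational-linearly with $c$, parametric linear programming produces a piecewise rational linear $\scl$ on $B_1^H(G)$.

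The principal obstacle is the replacement step just sketched: the vanishing $\scl_{G_\lambda}(C_\lambda)=0$ is only an asymptotic statement, so one must simultaneously scale the entire decomposition by a large integer $N$, insert a near-extremal surface for $N\cdot C_\lambda$ in each factor, and reglue coherently across all factors so that the result is still an admissible surface for $Nn(v)$ copies of $c$. Making this compatible across all factors at once—and checking that the resulting linear functional really agrees with $\scl$ rather than giving only a lower bound—is the technical heart of the argument; once carried out, the remaining work is the same linear-programming bookkeeping as in \cite{DFPQL,DSSS}, extended from disks to arbitrary factor pieces.
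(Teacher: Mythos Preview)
Your proposal has a genuine gap at the encoding step. You say $v$ lives in a ``finite-dimensional rational vector space recording the multiplicities of each piece-type (a factor index $\lambda$ together with the cyclic sequence of labels around $\partial P_i$)''. But the set of such cyclic sequences is infinite: a factor piece can have arbitrarily many corners, since its boundary may traverse many syllable-arcs before closing up (e.g.\ already for $c=ab$ with $a$ of order $k$ in $A$, the disk piece on the $A$-side has $k$ corners). So either $\mathcal{C}(c)$ is infinite-dimensional and your linear-programming conclusion does not follow, or you must pass to a genuinely finite encoding.

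The paper uses the finite encoding by \emph{pairs} of consecutive syllables, i.e.\ coordinates indexed by $T_2(A)$, and this is exactly where the difficulty you have hidden reappears. With that encoding, many different surfaces $S_A$ have the same vector $v$, and the ``trivial-piece bonus'' is no longer a well-defined linear function of $v$: the maximal number of disks one can extract from a surface with encoding $v$ is the function $\kappa_A(v)=\sup\{\sum t_i : v=v'+\sum t_i d_i,\ v'\in V_A,\ d_i\in\mathcal{D}_A\}$, which is concave and homogeneous but \emph{a priori} badly nonlinear, because the set $\mathcal{D}_A$ of disk vectors can be extremely complicated (the paper gives an example with $A$ the Heisenberg group where $\mathcal{D}_A$ consists of lattice points between two parabolas with a parity constraint). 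The heart of the paper is Lemma~\ref{keylemma}: for any rational polyhedral cone $C$ and any subset $D\subset C$ of lattice points, there is a \emph{finite} $D'\subset D$ with $D+C=D'+C$; applied to $(V_A,\mathcal{D}_A)$ this forces $\conv(\mathcal{D}_A)+V_A$ to be a rational polyhedron and hence $\kappa_A$ to be the minimum of finitely many rational linear functions. This is the missing idea in your outline, and it is not the ``replacement/regluing'' step you flagged as the principal obstacle---that step (your asymptotic insertion of near-extremal factor surfaces) is the comparatively easy Lemma~\ref{chi0scl0}.
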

	This holds, for example, when all $G_\lambda$ are amenable. See Remark \ref{vanscl} for a list of groups having vanishing scl.
	
	A homomorphism $\phi:G\to H$ for which $\scl_H(\phi(c))=\scl_G(c)$ for all chains $c$ (see Section \ref{sec2}) is said to be \emph{isometric} for scl. Injections admitting a retract are isometric. It is shown by Calegari--Walker \cite{CWIEND} that random homomorphisms between free groups are isometric for scl. In this paper, we show that isometric embeddings (meaning injective and isometric) are preserved under taking free products:
	\begin{theoremx}[Isometric Embedding]
		Let $f_\lambda:H_\lambda\to G_\lambda$ be a family of isometric embeddings, then so is the induced map $f:*_\lambda H_\lambda\to*_\lambda G_\lambda$.
	\end{theoremx}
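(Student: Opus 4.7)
The plan is to exploit the surface-map interpretation of scl in a free product developed earlier in the paper: an admissible surface $S$ for a chain in $*_\lambda G_\lambda$ maps into the wedge $\bigvee_\lambda K(G_\lambda,1)$, and after being put transverse to the wedge point it cuts canonically into factor pieces $S_\lambda\to K(G_\lambda,1)$ with $\chi^-$ additive across the pieces. Once scl in the free product is viewed as arising from such decompositions, isometry of each $f_\lambda$ passes through piece by piece.

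Injectivity of $f\defeq *_\lambda f_\lambda$ follows from the normal form theorem for free products, since each injective $f_\lambda$ carries reduced words to reduced words. The inequality $\scl_{*_\lambda G_\lambda}(f(c))\le\scl_{*_\lambda H_\lambda}(c)$ is immediate from monotonicity of scl under homomorphisms. For the reverse inequality, fix a rational chain $c$ in $*_\lambda H_\lambda$ and $\epsilon>0$, and choose an admissible surface $S\to\bigvee_\lambda K(G_\lambda,1)$ for $f(c)$ whose normalized cost $-\chi^-(S)/2n(S)$ is within $\epsilon$ of $\scl_{*_\lambda G_\lambda}(f(c))$.

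Cutting $S$ along the preimage of the wedge point decomposes it into pieces $S_\lambda$. Each boundary component of $S_\lambda$ alternates arcs from $\partial S$, which map into $f_\lambda(H_\lambda)\subset G_\lambda$ because $\partial S$ maps to $f(c)$, with arcs that collapse to the wedge point. Hence $\partial S_\lambda$ represents a chain $c_\lambda$ supported in the subgroup $f_\lambda(H_\lambda)$, which by injectivity of $f_\lambda$ pulls back to a chain $c_\lambda'$ in $H_\lambda$, automatically in $[H_\lambda,H_\lambda]$ because $S_\lambda$ bounds it. The isometry hypothesis then gives $\scl_{H_\lambda}(c_\lambda')=\scl_{G_\lambda}(c_\lambda)$, so one may replace each $S_\lambda$ by an admissible surface $S_\lambda'\to K(H_\lambda,1)$ for $c_\lambda'$ of no greater cost, up to $\epsilon$. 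Regluing the $S_\lambda'$ along the combinatorial pattern of wedge-point arcs recorded by $S$ produces an admissible surface $S'\to\bigvee_\lambda K(H_\lambda,1)$ for $c$ with $-\chi^-(S')/2n(S')\le\scl_{*_\lambda G_\lambda}(f(c))+O(\epsilon)$, and the reverse inequality follows as $\epsilon\to 0$.

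The principal obstacle is the regluing step: when each factor piece $S_\lambda$ is replaced by an scl-optimal surface $S_\lambda'$ for the same boundary chain, the cyclic matching of wedge-point arcs along the cut curves must be preserved so that reassembly yields a genuine surface realizing $c$ itself rather than some other chain with the same letter content. This forces the decomposition lemma to record the full arc-pairing combinatorics as part of the boundary data each factor surface must realize, and one may need to pass to covers of the pieces to match multiplicities along the common arcs before gluing. Once that combinatorial scaffolding is in place, which is essentially the content of the wedge-of-spaces framework developed in Section \ref{sec2}, the isometry hypothesis applies to each factor and the theorem follows.
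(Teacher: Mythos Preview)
Your approach is the direct surface-swapping argument that the paper itself sketches as an alternative in the Remark immediately following the theorem; the paper's primary proof instead runs through the polyhedral machinery, using Lemma~\ref{chi0gen} to express $\chi_{o,G_\lambda}(v)$ in terms of $\pscl_{G_\lambda}$ and the disk-vector function $\eta_{G_\lambda}$, and then observing that injectivity of $f_\lambda$ identifies $\mathcal{D}_{H_\lambda}$ with $\mathcal{D}_{G_\lambda}$ (hence $\eta$) while the isometry hypothesis identifies $\pscl$, so the two optimization problems of Lemma~\ref{detscl} coincide.

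Your sketch has a real gap in the replacement step. The quantity that must be controlled for each piece is $\chi_o(S_\lambda)=\chi(S_\lambda)-\tfrac12\#(\text{arcs})$, hence $\chi(S_\lambda)$, not $\chi^-(S_\lambda)$. When $S_\lambda$ has disk components one has $\chi(S_\lambda)>\chi^-(S_\lambda)$, and the isometry hypothesis---a statement about scl, hence about $\chi^-$---does not by itself produce a replacement $S_\lambda'$ in $H_\lambda$ with $\chi(S_\lambda')/n\ge\chi(S_\lambda)-\epsilon$. The fix, and this is exactly what the paper's Remark does, is to treat disk components separately: their boundaries are words in the arcs that represent the identity in $G_\lambda$, hence by \emph{injectivity} of $f_\lambda$ already represent the identity in $H_\lambda$, so these disks factor through $K(H_\lambda,1)$ and are retained verbatim; only the union of the non-disk components is replaced by an scl-approximating surface via the isometry hypothesis. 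You correctly flag regluing as delicate, but the $v$-vector actually matches automatically once the boundary covers the prescribed loops; the issue you are missing is the $\chi$ versus $\chi^-$ discrepancy from disks, which is precisely why injectivity (not just isometry) is needed. A second, smaller point: the pulled-back chain $c_\lambda'$ is not homologically trivial in $H_\lambda$ ``because $S_\lambda$ bounds it''---that only gives triviality in $G_\lambda$, and $f_\lambda$ need not be injective on $H_1$. The correct reason is combinatorial, coming from $c\in B_1(*_\lambda H_\lambda)$ as in Remark~\ref{redun}.
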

	
	A spin-off of the techniques used in the proof is a new method to compute scl; we give examples in Section \ref{app}. 
	
	In particular, these techniques give new insights for scl in families. It was proved by Calegari--Walker \cite{CWInthull} that for free products of free abelian groups, certain families of words $w(n)$ (called surgery families) are eventually quasi-rational in $n$. A similar question was studied by Walker: for any fixed rational chain $c$ in $F_n$, and any $\bm{o}=(o_1,o_2,\ldots,o_n)$, with $o_i\ge2$, let $c_{\bm{o}}$ be the image of $w$ under the natural homomorphism $\phi:F_n\to *_i\mathbb{Z}/o_i\mathbb{Z}$, how does $\scl(c_{\bm{o}})$ vary as a function of $\bm{o}$?
	
	It was observed experimentally by Walker \cite{AWscylla} that $\scl(c_{\bm{o}})$ exhibits interesting periodic behavior, and he conjectured that the result is piecewise quasi-linear in $1/o_i$ (see Conjecture \ref{conj}). In Section \ref{wconj} we give a counter-example, but prove a weaker version: $\scl(c_{\bm{o}})$ is piecewise quasi-rational in $\bm{o}$ (see Theorem \ref{weakWconj}). It was pointed out by Timothy Susse that he had proved this weaker version earlier in \cite[Corollary 4.14]{SusseSCL} using a different approach.
	
	It is worth mentioning that the method in this paper can be used to generalize and give a new approach to the spectral gap theorem by Duncan--Howie \cite{DH}, which will be discussed in another paper \cite{SCLGAP}.
	
	\subsection{Contents of paper} We first give basic definitions in Section \ref{sec2}. Then in Section \ref{sec3} we introduce a way, following \cite{DSSS}, to use a finite dimensional polyhedral cone to encode surface maps into a wedge of spaces with given boundary information. The encoding loses information, so in Section \ref{sec4} we study a nonlinear optimization problem on the fibers. This reduces the computation of scl to a lattice point problem, which we solve, deducing Theorem \ref{rationality} and Theorem \ref{isoemb}. When scl vanishes in each factor, the non-linearity comes from \emph{disk vectors}, which become complicated compared to the abelian case discussed in \cite{DSSS}. In Section \ref{app}, we apply our method to give generalizations and new proofs of old results, where we also prove a formula conjectured by Alden Walker in \cite{AWscylla}. Finally in Section \ref{wconj} we give a counter-example to Walker's conjecture and prove a weaker version.
	
	\subsection{Acknowledgment} The author thanks his advisor Danny Calegari for insightful introduction to this topic. The author also thanks Timothy Susse and Alden Walker for useful conversations. Finally the author thanks the referee for nice suggestions.
	
	\section{Background}\label{sec2}
	In this section we give the definitions and basic facts about scl that we will use. All of these can be found in \cite{DSCL}.
	
	\begin{definition}
		Let $S$ be a compact surface. Define $$\chi^-(S)=\sum_i\min(0,\chi(S_i))$$ where $S_i$ are the components of $S$ and $\chi$ is the Euler characteristic. Equivalently, $\chi^-(S)$ is the Euler characteristic of $S$ after removing disk and sphere components.
	\end{definition}
	\begin{definition}
		Let $g_i\in G$ ($1\le i\le k$) such that they sum to $0$ in $H_1(G;\mathbb{R})$. Let $K$ be a $K(G,1)$. For all $i$, let $\gamma_i:S^1\to K$ be a loop representing the conjugacy class of $g_i$ and $L=\sqcup_i S^1$. A compact oriented surface $S$ together with a map $f:S\to K$ is called \textit{admissible} of degree $n(S)\ge1$ if the following diagram commutes
		\[
		\begin{CD}
		\partial S @>{i}>> S\\
		@V{\partial f}VV @V{f}VV\\
		L @>{\sqcup \gamma_i}>> K
		\end{CD}
		\]
		where $i$ is the inclusion map and $\partial f_*[\partial S]=n(S)[L]$.
		
		Define $$\scl(g_1+g_2+\cdots+g_k)=\inf_S\frac{-\chi^-(S)}{2n(S)}$$ over all admissible surfaces.
	\end{definition}
	If $k=1$, the geometric definition agrees with the algebraic one \cite[Proposition 2.10]{DSCL}. We (informally) say a surface map is \textit{efficient} if $-\chi^-(S)/2n(S)$ is close to $\scl(\sum g_i)$.
	\begin{remark}\label{sameori}
		A priori the degrees on different components of $\partial S$ could have opposite signs. Such an admissible surface can be replaced by another one that is at least as efficient as $S$, by taking suitable finite covers and gluing components with opposite orientations together. Thus one may restrict attention to \emph{monotone} admissible surfaces, i.e. $\partial f$ is orientation preserving \cite[Proposition 2.13]{DSCL}.
	\end{remark}
	Recall the complex of real group chains $(C_*(G;\mathbb{R}),\partial)$ whose homology is $H_*(G;\mathbb{R})$, the real group homology of $G$. In the sequel, we write $B_1(G)$ for $B_1(G;\mathbb{R})$, the \linebreak 1-boundaries. scl is defined on integral 1-boundaries, and has a unique continuous linear extension to a pseudo-norm on $B_1(G)$, which vanishes on $$H(G)\defeq\text{span}_\mathbb{R}\left<ng-g^n, g-hgh^{-1}\right>\le B_1(G),$$ thus scl descends to a pseudo-norm on the quotient. See \cite{DSCL} for details.
	\begin{definition}
		Define $B_1^H(G)=B_1(G)/H(G)$. We say scl is \textit{piecewise rational linear} if it is piecewise rational linear on every finite dimensional rational subspace of $B_1^H(G)$. We say a group homomorphism $f:G_1\to G_2$ is an \textit{isometric embedding} if $f$ is injective and the induced map $f:B_1^H(G_1)\to B_1^H(G_2)$ preserves scl, i.e. $\scl_{G_1}(c)=\scl_{G_2}(f(c))$ for all $c\in B_1^H(G_1)$.
	\end{definition}
	The simplest isometric embeddings come from retracts.
	\begin{prop}\label{retract}
		Let $i:H\to G$ and $r:G\to H$ be group homomorphisms such that $r\circ i={\rm id}_H$, then $i$ is an isometric embedding.
	\end{prop}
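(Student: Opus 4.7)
The plan is to deduce this essentially formally from the two basic properties of scl listed in the introduction, namely monotonicity under homomorphisms and its extension by continuity to $B_1^H$.

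First I would observe that $i$ is injective: if $i(h)=i(h')$ then applying $r$ gives $h=r\circ i(h)=r\circ i(h')=h'$. Since monotonicity and linearity pass to the induced maps on $B_1^H$, it remains to verify the scl-preserving condition on chains $c\in B_1^H(H)$.

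For this, I would apply monotonicity twice. Using $i:H\to G$, monotonicity gives $\scl_H(c)\ge \scl_G(i(c))$. Using $r:G\to H$ applied to $i(c)$, monotonicity gives $\scl_G(i(c))\ge \scl_H(r(i(c)))$. But $r\circ i=\mathrm{id}_H$, so $r(i(c))=c$ and the second inequality reads $\scl_G(i(c))\ge \scl_H(c)$. Combining the two inequalities yields $\scl_G(i(c))=\scl_H(c)$, which is exactly the isometric embedding condition.

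There is no real obstacle: the only subtle point is that monotonicity is stated in the excerpt for $g\in[G,G]$, while the conclusion is about chains in $B_1^H$. This is handled by the fact that scl extends uniquely by continuity and linearity from integral $1$-boundaries to $B_1^H$, so the chain-level monotonicity $\scl_H(c)\ge\scl_G(i(c))$ and $\scl_G(i(c))\ge\scl_H(r(i(c)))$ follow from the element-level statement together with linearity of the induced maps $i_*,r_*$ on group chains.
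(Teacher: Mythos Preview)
Your proof is correct and is exactly the approach the paper takes: the paper's proof simply states that it ``follows immediately from monotonicity of scl,'' and your argument spells out precisely that two-line monotonicity sandwich $\scl_H(c)\ge \scl_G(i(c))\ge \scl_H(r\circ i(c))=\scl_H(c)$.
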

	This follows immediately from monotonicity of scl.
	\begin{remark}\label{finiteness}
		In particular, the calculation of scl in a free product of infinitely many groups reduces to computations in the free product of finitely many groups.
	\end{remark}
	
	\section{Encoding Surface Maps as Vectors}\label{sec3}
	In this section, we introduce the method from \cite{DSSS} to encode admissible surface maps into a wedge of spaces as vectors in a finite dimensional rational polyhedron.
	
	In the sequel, fix $G=A*B$ to be a free product of two groups $A$ and $B$. Since every finite dimensional rational subspace of $B_1^H(G)$ is a rational subspace of $\left<Z\right>\cap B_1^H(G)$, for some finite subset $Z$ of nontrivial conjugacy classes in $G$, we fix such a $Z$ and study the restriction of scl to $\left<Z\right>\cap B_1^H(G)$. We assume that there are no torsion elements in $Z$ since $ng=g^n=1$ in $B_1^H(G)$ if $g$ is of order $n$.
	
	Let $K_A$ and $K_B$ be a $K(A,1)$ and $K(B,1)$ respectively, then $K=K_A\vee K_B$ is a $K(G,1)$ with wedge point $*$. By choosing appropriate loops to represent elements of $Z$, we get an oriented closed 1-manifold $L$ (one component for each element of $Z$) together with a map $\Gamma:L\to K$ such that for each component $L_i$:
	\begin{enumerate}
		\item either $\Gamma(L_i)$ is disjoint from $*$ and thus contained entirely in $K_A$ or $K_B$ (referred to as \emph{self loops});
		\item or $\Gamma^{-1}(*)\cap L_i$ cuts $L_i$ into finitely many intervals, each mapped alternately to a based loop in one of $K_A$ and $K_B$.
	\end{enumerate}
	Therefore, $L\backslash\Gamma^{-1}(*)$ has finitely many components, each taken to a loop contained in one of $K_A$ and $K_B$ (See Figure \ref{Lex}). Let $T(A)$ and $T(B)$ be the set of components taken to $K_A$ and $K_B$ respectively.
	
	\begin{figure}[H]
		\begin{center}
			\resizebox{222pt}{126pt}{\includegraphics{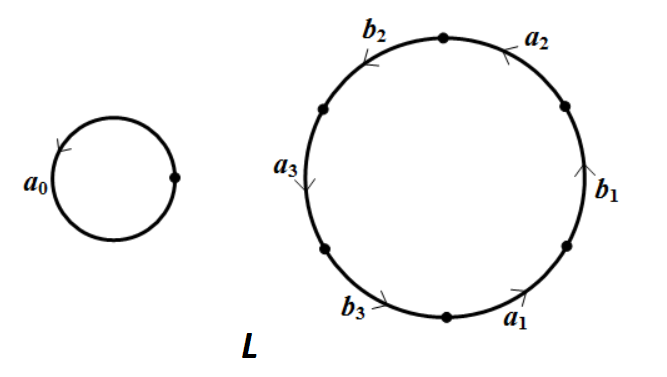}}
			\caption{The 1-manifold $L$ when $Z=\{a_0,a_1b_1a_2b_2a_3b_3\}$; the component on the left is a self-loop.} \label{Lex}
		\end{center}
	\end{figure}
	
	Now for any surface $f:S\to K$ admissible for an integral class in $\left<Z\right>\cap B_1^H(G)$, we may assume up to a homotopy that $\partial f:\partial S\to L$ is a (possibly disconnected) covering map, and assume $f$ is transverse to $*$, i.e. $F\defeq f^{-1}(*)$ is a finite disjoint union of embedded loops and proper arcs. We may also assume (by Remark \ref{sameori}) $\partial f:\partial S\to L$ is orientation preserving.
	
	We can eliminate loops in $F$ by compressing the innermost one each time, which does not increase $-\chi^{-}(S)$. Each proper arc in $F$ is essential in $S$ since $\partial f$ is a covering. So from now on, we assume that $F$ consists of (essential) proper arcs.
	
	Let $S_A$ and $S_B$ be $f^{-1}(K_A)$ and $f^{-1}(K_B)$ respectively, and we focus on $S_A$ in the rest of this section. 
	
	$S_A$ is a surface with corners, and each component of $\partial S_A$ either covers a self loop mapped to $K_A$, or can be decomposed into arcs alternating between components of $F$ and arcs mapped to elements in $T(A)$ (See Figure \ref{SASB}). Note that the corners of $S_A$ are exactly $F\cap\partial S$, thus the orbifold Euler characteristic of $S_A$ $$\chi_o(S_A)\defeq\chi(S_A)-\frac{1}{4}\#(\text{corners})=\chi(S_A)-\frac{1}{2}\#(\text{components of }F),$$ and $S$ can be obtained by gluing $S_A$ and $S_B$ along $F$, hence $$\chi(S)=\chi_o(S_A)+\chi_o(S_B).$$

	\begin{figure}
		\begin{center}
			\resizebox{323pt}{129pt}{\includegraphics{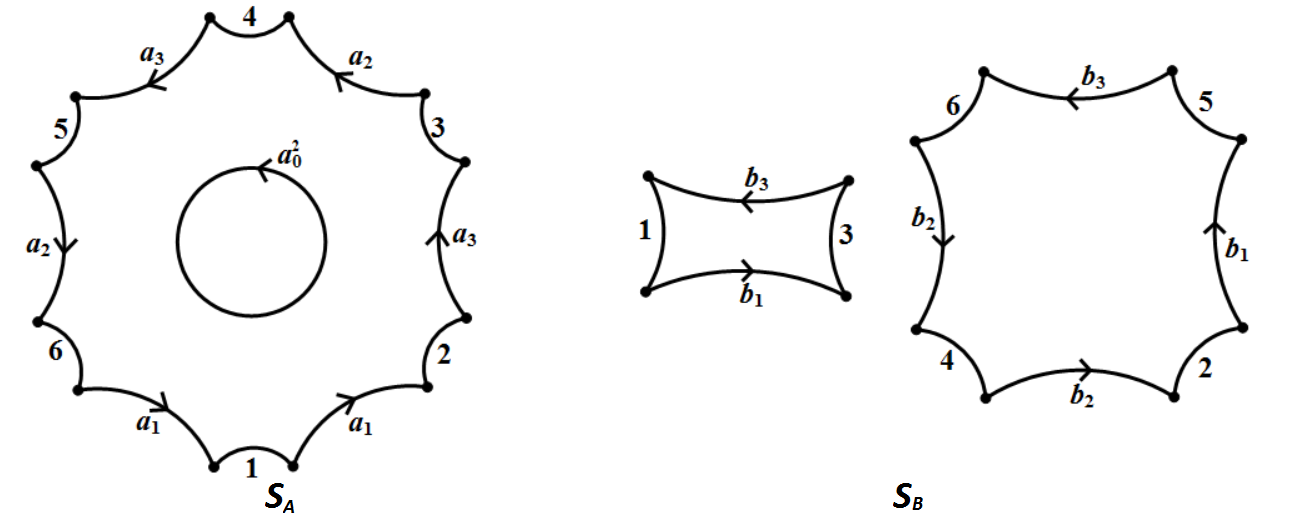}}
			\caption{An example of $S_A$ and $S_B$; components of $F$ are labeled by numbers and arcs with the same label are identified after gluing.} \label{SASB}
		\end{center}
	\end{figure}
	Also note that each component of $F$ with orientation induced from $S_A$ goes from one element of $T(A)$ to another, and thus can be encoded as an ordered pair of these two elements of $T(A)$. Although elements of $T(A)$ corresponding to self loops do not appear in this way, it is convenient to encode a component of $\partial S_A$ that covers a self loop $\tau$ with degree $n$ as $n(\tau,\tau)$, thus we define $$T_2(A)=\{(\tau,\tau')\in T(A)^2|\tau=\tau'\text{ if one of them cooresponds to a self loop}\}.$$
	
	Let $C_1(A)$ and $C_2(A)$ be the $\mathbb{R}$-vector spaces with bases $T(A)$ and $T_2(A)$ respectively, then we can encode the surface $S_A$ as a vector $v(S_A)$ in $C_2(A)$ as follows: each component of $F$ is encoded as an element of $T_2(A)$ described as above, each component of $\partial S_A$ that covers some self loop $\tau$ with degree $n$ is encoded as $n(\tau,\tau)$, and $v(S_A)$ is defined to be the sum of these vectors in $C_2(A)$.
	
	Obviously, $v(S_A)$ is a non-negative integer vector in $C_2(A)$, and it satisfies two more linear constraints. Define a (rational) linear map $\partial:C_2(A)\to C_1(A)$ by $\partial(\tau,\tau')=\tau-\tau'$, then $\partial\circ v(S_A)=0$ since every boundary components of $S_A$ closes up. Define $h:C_2(A)\to H_1(A)\otimes\mathbb{R}$ by $h(\tau,\tau')=\frac{1}{2}(\tau+\tau')$, where $H_1(A)$ is the abelianization of $A$, then $h\circ v(S_A)$ is just the image of $[\partial S_A]$ in $H_1(A;\mathbb{R})$, which is $0$ since it bounds $S_A$.
	
	\begin{definition}\label{VA}
		Let $V_A$ be the convex rational polyhedral cone of non-negative vectors $v\in C_2(A)$ satisfying $\partial(v)=0$ and $h(v)=0$.
	\end{definition}
	
	The discussion above shows that $v(S_A)$ is an integer vector in $V_A$ for any such $S_A$. Conversely, for any integer vector $v\in V_A$, since $\partial(v)=0$, the sum \linebreak $\sum\frac{1}{2}(\tau+\tau')$ actually defines an integral homology class in $H_1(A;\mathbb{Z})$, whose image under $H_1(A;\mathbb{Z})\to H_1(A;\mathbb{Z})\otimes\mathbb{R}\cong H_1(A;\mathbb{R})$ is $h(v)=0$. Hence there is a positive integer $n$ such that the integral homology class given by $nv$ is trivial and thus bounds some (actually many) surface(s). The same thing holds for rational vectors in $V_A$. We summarize this as a lemma for later use.
	
	\begin{lemma}[\cite{DSSS}]\label{Qvec}
		The vector $v(S_A)$ is integral in $V_A$. Conversely, for any rational vector $v\in V_A$, there is an integer $n\ge1$ such that $nv=v(S_A)$ for some $S_A$.
	\end{lemma}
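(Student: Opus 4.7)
The forward direction unpacks definitions. By construction $v(S_A)$ is a non-negative integer combination of elements of $T_2(A)$. The relation $\partial v(S_A)=0$ holds because each non-self-loop boundary component of $S_A$ alternates between $T(A)$-arcs and $F$-arcs and closes up, forcing the number of $F$-arc head-endpoints to match the number of tail-endpoints at each $\tau\in T(A)$; self-loop components contribute terms of the form $n(\tau,\tau)\in\ker\partial$. The relation $h(v(S_A))=0$ records that $\partial S_A$ bounds $S_A$ inside $K_A$, so it has trivial class in $H_1(A;\mathbb{R})$.

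For the converse, let $v\in V_A$ be rational and clear denominators to assume $v$ is integral. In Stage 1 I build a prospective boundary: for each non-self-loop pair $(\tau,\tau')\in T_2(A)$ prepare $v(\tau,\tau')$ abstract arcs labeled $\tau\to\tau'$ to play the role of $F$-arcs, and for each self-loop $\tau$ a single closed curve of degree $v(\tau,\tau)$ over $\tau$; in addition prepare $\sum_{\tau'} v(\tau,\tau')$ copies of each non-self-loop $\tau$. The hypothesis $\partial v=0$ says precisely that at every non-self-loop $\tau$ the endpoint counts match, so any endpoint-matching produces a closed oriented 1-manifold with corners $C$ together with a map $C\to K_A$ whose $T_2(A)$-encoding is exactly $v$.

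In Stage 2 I cap $C$ off. The class $[C]\in H_1(A;\mathbb{Z})$ has image $h(v)=0$ in $H_1(A;\mathbb{R})$, so it is torsion; replacing $v$ by $nv$ (and $C$ by $n$ parallel copies) for an appropriate $n\ge 1$ arranges $[C]=0$ integrally. Then $C$ bounds a compact oriented surface mapped to $K_A$ extending the map on $C$, e.g.\ by realizing a singular 2-chain with boundary $C$ as a surface. The resulting $S_A$ satisfies $v(S_A)=nv$ by construction. The only substantive obstacle is the torsion in $H_1(A;\mathbb{Z})$, which forces the multiplicative passage; the remaining ingredients --- elementary endpoint-matching and the standard fact that null-homologous 1-cycles bound --- are routine.
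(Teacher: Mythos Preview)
Your proof is correct and follows essentially the same approach as the paper. The paper's argument (given in the paragraph preceding the lemma rather than as a formal proof) is precisely your two-stage outline: the forward direction is the earlier verification that $v(S_A)$ is a non-negative integer vector with $\partial v(S_A)=0$ and $h(v(S_A))=0$; for the converse one observes that $\partial v=0$ lets the data assemble into a well-defined integral $1$-cycle, that $h(v)=0$ forces this cycle to be torsion in $H_1(A;\mathbb{Z})$, and that after scaling by a suitable $n$ the resulting null-homologous cycle bounds a surface. Your Stage~1 makes the endpoint-matching step more explicit than the paper does, but there is no substantive difference in method.
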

	
	Such an encoding reduces the huge space of admissible surfaces to a finite dimensional space. However, this reduction comes at a cost. There are many different surfaces $S_A$ encoded as the same $v(S_A)$, thus we are led to the following optimization problem: given $v$, a rational vector in $V_A$, what is the infimum of $-\chi_o(S_A)/n(S_A)$ over all surfaces $S_A$ with $v(S_A)=n(S_A)v$ for some $n$? We address this in the next section. 
	
	\section{Nonlinear Optimization}\label{sec4}
	Now we study the optimization problem discussed above. We follow \cite{DSSS}, except that there are significant new issues because the factors are non-abelian. The key observation is Lemma \ref{keylemma}.
	\begin{definition}
		For any rational vector $v\in{V_A}$, define $$\chi_{o,A}(v)=\sup\left\{\left.\frac{\chi_o(S_A)}{n}\right|{v(S_A)=nv}\text{ for some }n\in\mathbb{N}\right\}.$$
	\end{definition}
	As we saw in Section \ref{sec3}, $\chi_o(S_A)=\chi(S_A)-\frac{1}{2}\#(\text{components of }F)$. The number of components of $F$ is a linear function $|v|$ in $v(S_A)$ defined as follows: on the basis, $|(\tau,\tau')|$ is $1$ if $\tau\neq\tau'$, and is $0$ if otherwise; then extend by linearity. Notice that $|v|$ is just the $L^1$ norm if there is no self loop.
	
	Therefore,
	\begin{equation}\label{chi0}
	\chi_{o,A}(v)=-\frac{1}{2}|v|+\sup\left\{\left.\frac{\chi(S_A)}{n}\right|{v(S_A)=nv}\text{ for some }n\in\mathbb{N}\right\}.
	\end{equation}
	Note that the second term is quite similar to the definition of $-2\cdot\scl$, but $S_A$ could have disk components and $\partial S_A$ could be admissible for different chains in $B_1^H(A)$. We first deal with disk components. 
	
	\subsection{Disk Vectors}
	\begin{definition}\label{diskvdef}
		We call $v\in V_A$ a \textit{disk vector} if $v$ encodes some disk. Denote the set of disk vectors by $\mathcal{D}_A$. For $v\in V_A$, we say $v=v'+\sum t_id_i$ is an \textit{admissible expression} if $v'\in V_A$, $t_i\ge0$ and $d_i\in\mathcal{D}_A$. Define $$\kappa_A(v)=\sup\left\{\left.\sum t_i\right|v=v'+\sum t_id_i\text{ is an admissible expression}\right\}.$$
	\end{definition}
	Roughly speaking, $\kappa_A(v)$ is the maximal ``number'' of disk vectors that can be subtracted from $v$. In the case where scl vanishes on $B_1^H(A)$, we have
	\begin{lemma}\label{chi0scl0}
		If scl vanishes on $B_1^H(A)$, then $\chi_{o,A}(v)=-\frac{1}{2}|v|+\kappa_A(v)$ for any rational vector $v\in V_A$.
	\end{lemma}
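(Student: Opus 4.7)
My plan is to verify both directions of the claimed identity. By identity \eqref{chi0}, it suffices to prove
\[
\sigma(v) \defeq \sup\{\chi(S_A)/n : v(S_A) = nv\} = \kappa_A(v),
\]
where we may restrict $S_A$ to have no sphere components (such components contribute $0$ to $v(S_A)$ but $+2$ to $\chi$, and the admissible surfaces $S \to K$ used to compute $\scl$ may always be chosen sphere-free).

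For the direction $\sigma(v) \leq \kappa_A(v)$: given $S_A$ with $v(S_A) = nv$, decompose $S_A = D \sqcup S'$ into its disk components $D$ and its non-disk, non-sphere components $S'$. Then
\[
v(S_A) = \sum_{j \in D} d_j + v(S'),
\]
where each $d_j$ is the disk vector of a single disk. Dividing by $n$ gives the admissible expression $v = v(S')/n + \sum_j (1/n) d_j$, so $\kappa_A(v) \geq \#D/n$. Since $S'$ has no disk or sphere components, $\chi(S') \leq 0$, and hence $\chi(S_A)/n = \#D/n + \chi(S')/n \leq \#D/n \leq \kappa_A(v)$. Taking the supremum finishes this direction.

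For the direction $\sigma(v) \geq \kappa_A(v)$: fix $\epsilon > 0$ and a rational admissible expression $v = v' + \sum_i t_i d_i$ with $\sum_i t_i > \kappa_A(v) - \epsilon$. Clearing denominators, take $N$ so that $Nv'$ and each $Nt_i$ are integral; by Lemma \ref{Qvec} there exist $M \geq 1$ and a surface $\tilde S$ with $v(\tilde S) = MN v'$. The boundary of $\tilde S$ determines a chain $c' \in B_1^H(A)$, and by hypothesis $\scl_A(c') = 0$. For any $\delta > 0$, the plan is to produce a disk-free and sphere-free surface $S'$ with $v(S') = K v'$ exactly (for some $K \in \mathbb{N}$) and $-\chi(S')/K < \delta$. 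Adjoining $K t_i$ disks realizing each $d_i$ gives $S_A = S' \sqcup D$ with $v(S_A) = K v$ and
\[
\chi(S_A)/K = \chi(S')/K + \sum_i t_i > \sum_i t_i - \delta > \kappa_A(v) - \epsilon - \delta,
\]
so letting $\delta \to 0$ and then $\epsilon \to 0$ yields the desired bound.

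The main obstacle is the construction of $S'$ with $v(S') = K v'$ \emph{exactly}. The vanishing of $\scl_A(c')$ supplies admissible surfaces $\Sigma$ for $c'$ with $-\chi^-(\Sigma)/n(\Sigma) < \delta$, but $v(\Sigma)$ may only agree with $n(\Sigma) v'$ as chain class modulo $H(A)$, not on the nose: different Eulerian reorganizations of the $T(A)$-arcs yield the same chain but distinct vectors. I would bridge this by combining $\Sigma$ with extra copies of $\tilde S$ (which has vector exactly on the ray through $v'$) and performing identifications along matching boundary arcs lying in $F$. Such identifications preserve $\chi$ (the pairs of cancelled corners on either side balance the loss of an $F$-arc) and implement precisely the swap moves that parametrize the fiber of vectors over a fixed chain class, so a careful combinatorial bookkeeping should show that the discrepancy $v(\Sigma) - n(\Sigma) v'$ can always be neutralized, yielding the $S'$ required above.
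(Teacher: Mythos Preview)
Your $\leq$ direction is correct and matches the paper. The gap is in the $\geq$ direction, but not for the reason you think: the ``obstacle'' you identify is illusory, and your proposed workaround (gluing $\Sigma$ to copies of $\tilde S$ along $F$-arcs with ``careful combinatorial bookkeeping'') is both incomplete as written and unnecessary.

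The point you are missing is that the vector $v(\,\cdot\,)$ is determined entirely by the boundary, together with its decomposition into $T(A)$-arcs separated by preimages of the wedge point $*$. When you form the chain $c'\in B_1(A)$ from $\partial\tilde S$, you are free to choose the representing loops $\gamma_j:S^1\to K_A$ in Definition~2.2 to be \emph{literally the boundary loops of $\tilde S$}, carrying their existing polygonal (corner) structure. An admissible surface $\Sigma$ for $c'$ then comes, by definition, with a map $\partial\Sigma\to\sqcup_j\gamma_j$ which (after passing to a monotone representative, Remark~2.3) is an orientation-preserving covering. Pulling back the corner structure along this covering shows immediately that $v(\Sigma)=n(\Sigma)\,v(\tilde S)=n(\Sigma)MN\,v'$ on the nose; there is no ``Eulerian reorganization'' because the boundary of $\Sigma$ is not free to wander among words representing the same conjugacy class --- it is pinned to the specific loops $\gamma_j$. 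This is exactly what the paper does: it takes $S_A'$ with $v(S_A')=nv'$, lets $c=\partial S_A'$, and then any efficient admissible $S_A''$ for $c$ automatically satisfies $v(S_A'')=N\,v(S_A')$.

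Once you see this, your argument collapses to the paper's: take $\Sigma$ admissible for $c'$ with $-\chi^-(\Sigma)/n(\Sigma)<\delta$, set $K=n(\Sigma)MN$, adjoin $Kt_i$ disks realizing each $d_i$, and use $\chi(\Sigma)\ge\chi^-(\Sigma)$. No surgery is required, and you need not insist that $\Sigma$ be disk-free.
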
	
	\begin{proof}
		This is equivalent to showing that $$\sup\left\{\left.\frac{\chi(S_A)}{n}\right|v(S_A)=nv\text{ for some }n\in\mathbb{N}\right\}=\kappa_A(v)$$ by equation (\ref{chi0}). Suppose $v(S_A)=nv$ for some $n\in\mathbb{N}$, let $D_1,\dots,D_k$ be the disk components of $S_A$ and $S_A=S_A'\sqcup(\sqcup D_i)$, then $\chi(S_A)=\chi^-(S_A)+k\le k$ and $v=v(S_A')/n+\sum v(D_i)/n$ is an admissible expression. Then $\kappa_A(v)\ge k/n$ and thus $\chi(S_A)/n\le\kappa_A(v)$, this proves the ``$\le$'' direction. 
		
		Conversely, for any given $\epsilon>0$, there is an admissible expression $v=v'+\sum t_id_i$ where $|\kappa_A(v)-\sum t_i|<\epsilon$. We may assume that each $t_i$ is rational, then $v'$ is also rational since $v$ is. Hence there is an integer $n\ge1$ such that each $nt_i$ is an integer and $nv'=v(S_A')$ for some $S_A'$ by Lemma \ref{Qvec}. Now $\partial S_A'$ defines a chain $c$ in $B_1(A)$ where scl vanishes, thus we can find some $S_A''$ such that $\partial S_A''=Nc$, $v(S_A'')=Nv(S_A')=Nnv'$ and $|-\chi^-(S_A'')/N|<\epsilon$. Also find disks $D_i$ such that $v(D_i)=d_i$, and take $Nnt_i$ copies of $D_i$ for each $i$. Finally take $S_A$ to be the disjoint union of all these disks and $S_A''$, then $v(S_A)=Nnv'+Nn\sum t_id_i=Nnv$ and $\chi(S_A)/Nn=\chi(S_A'')/Nn+\sum t_i\ge\chi^-(S_A'')/Nn+(\kappa_A(v)-\epsilon)\ge-\epsilon/n+\kappa_A(v)-\epsilon$. Since $\epsilon$ is arbitrary, this proves the other direction.
	\end{proof}
	This motivates the study of $\kappa_A(v)$ since $|v|$ is already linear on $V_A$. The following lemma is the same as Lemma 3.10 in \cite{DSSS}. The proof is standard, thus we omit it.
	\begin{lemma}\label{minkfunc}
		$\kappa_A$ is a non-negative concave homogeneous function on $V_A$. The subset of $V_A$ on which $\kappa_A=1$ is the boundary of $\conv(\mathcal{D}_A)+V_A$, where ``$+$'' denotes the Minkowski sum.
	\end{lemma}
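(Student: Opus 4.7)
The plan is to verify the three structural properties (non-negativity, homogeneity, concavity) directly from the definition of admissible expressions, then identify $\{\kappa_A = 1\}$ with the topological boundary of $\conv(\mathcal{D}_A)+V_A$ as a consequence of homogeneity together with the convex-cone structure of $V_A$.

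Non-negativity is immediate since $v = v + \sum_{\emptyset}$ is a valid admissible expression giving $\sum t_i = 0$. Before proving homogeneity and concavity, I would first check that the supremum defining $\kappa_A(v)$ is finite: because every $d_i \in \mathcal{D}_A$ is a nonzero vector with non-negative coordinates and $v' = v - \sum t_i d_i$ must lie in $V_A \subseteq \mathbb{R}_{\ge 0}^{T_2(A)}$, each coordinate of $\sum t_i d_i$ is bounded by the corresponding coordinate of $v$, so the sum $\sum t_i$ is bounded. Homogeneity is then routine: for $\lambda > 0$, the map sending an admissible expression $v = v' + \sum t_i d_i$ to $\lambda v = \lambda v' + \sum (\lambda t_i) d_i$ is a bijection between admissible expressions for $v$ and those for $\lambda v$, and scales the weight sums by $\lambda$.

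For concavity, I would take $v_1, v_2 \in V_A$ and $\lambda \in [0,1]$, pick admissible expressions $v_j = v_j' + \sum_i t_{j,i} d_{j,i}$ close to realising $\kappa_A(v_j)$, and combine them as
\[
\lambda v_1 + (1-\lambda) v_2 = \bigl[\lambda v_1' + (1-\lambda) v_2'\bigr] + \sum_i (\lambda t_{1,i}) d_{1,i} + \sum_i ((1-\lambda) t_{2,i}) d_{2,i}.
\]
Since $V_A$ is a convex cone, the bracketed term lies in $V_A$, so this is an admissible expression with weight sum $\lambda \sum_i t_{1,i} + (1-\lambda) \sum_i t_{2,i}$. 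Taking the supremum over the original expressions yields $\kappa_A(\lambda v_1 + (1-\lambda) v_2) \ge \lambda \kappa_A(v_1) + (1-\lambda) \kappa_A(v_2)$.

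For the boundary claim, the key step is the identification $\{v \in V_A : \kappa_A(v) \ge 1\} = \conv(\mathcal{D}_A) + V_A$. The inclusion ``$\supseteq$'' is immediate: if $v = w + v_0$ with $w = \sum t_i d_i$, $\sum t_i = 1$, $v_0 \in V_A$, this exhibits an admissible expression with weight sum $1$. Conversely, given $v = v' + \sum t_i d_i$ with $S \defeq \sum t_i \ge 1$, set $w = S^{-1} \sum t_i d_i \in \conv(\mathcal{D}_A)$ and rewrite $v = w + \bigl(v' + (S-1)w\bigr)$; since $\mathcal{D}_A \subseteq V_A$ and $V_A$ is a convex cone, the second summand lies in $V_A$. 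Combined with homogeneity, this gives $\{\kappa_A \ge c\} = c \cdot (\conv(\mathcal{D}_A) + V_A)$ for all $c > 0$. Finally, because $\kappa_A$ is concave and finite on $V_A$, it is continuous on the relative interior, so $\{\kappa_A > 1\}$ is relatively open and $\{\kappa_A = 1\}$ coincides with the (relative) boundary of $\conv(\mathcal{D}_A) + V_A$.

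The main obstacle I anticipate is the last continuity step: a concave function is automatically continuous on the relative interior of its domain, but one has to rule out pathological behaviour at the relative boundary of $V_A$ when claiming that $\{\kappa_A = 1\}$ is exactly the boundary. This is handled by noting that $\kappa_A$ is the support-style functional of a closed convex set built from finitely many rational data, so the set $\conv(\mathcal{D}_A) + V_A$ is closed and its boundary is cut out precisely by the equation $\kappa_A = 1$.
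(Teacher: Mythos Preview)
The paper omits the proof entirely, citing it as standard (Lemma 3.10 in \cite{DSSS}), so there is nothing to compare against; your argument is the expected one, and the verification of non-negativity, finiteness, homogeneity, and concavity is correct as written.

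For the boundary claim there is one point to sharpen. Your identity $\{\kappa_A\ge 1\}=\conv(\mathcal{D}_A)+V_A$ is only established in the form ``$v$ admits an admissible expression of weight $\ge 1$ iff $v\in\conv(\mathcal{D}_A)+V_A$''; if the supremum defining $\kappa_A(v)$ is not attained this gives only the nesting $\{\kappa_A>1\}\subseteq\conv(\mathcal{D}_A)+V_A\subseteq\{\kappa_A\ge 1\}$. That nesting, together with homogeneity, already yields the inclusion $\{\kappa_A=1\}\subseteq\partial(\conv(\mathcal{D}_A)+V_A)$: if $\kappa_A(v)=1$ then $tv$ lies in the set for $t>1$ and outside it for $t<1$. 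It is the reverse inclusion that needs continuity or closedness.

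Your final paragraph resolves this by asserting that $\conv(\mathcal{D}_A)+V_A$ is closed because it is ``built from finitely many rational data.'' That is not available here: $\mathcal{D}_A$ is in general an infinite set of integer points (see Example~\ref{diskex}), and the fact that $\conv(\mathcal{D}_A)+V_A$ is nonetheless a closed rational polyhedron is precisely the content of Lemma~\ref{keylemma} and its corollary, proved afterward. This is a forward reference rather than a genuine circularity (Lemma~\ref{keylemma} does not use Lemma~\ref{minkfunc}), and in the paper's only application (Corollary~\ref{keyob}) closedness is already in hand before Lemma~\ref{minkfunc} is invoked, so no harm is done; but you should not present closedness as something obvious from the definition of $\mathcal{D}_A$.
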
	
	
	\subsection{Key Observation} Now we are coming to the key observation that makes it possible to generalize the result in \cite{DSSS} to our rationality theorem. 
	
	In \cite{DSSS}, essentially using that $A$ is free abelian, $\mathcal{D}_A$ is determined explicitly as integer points lying in some open faces of $V_A$. Then $\conv(\mathcal{D}_A)+V_A$ is shown to be a finitely sided rational convex polyhedron using such an explicit description of $\mathcal{D}_A$, which also produces an effective algorithm to compute scl in that case. However, the set $\mathcal{D}_A$ of disk vectors could be very complicated and hard to determine explicitly in general. 
	
	The following example illustrates how complicated $\mathcal{D}_A$ could be, even when $A$ is the simplest non-abelian group. The study of this example was initiated in an unpublished note by Timothy Susse \cite{Susse}, who did computer experiments and gave conjectural pictures of the result.
	\begin{example}\label{diskex}
		Let $A=\mathcal{H}_3(\mathbb{Z})=\left<x,y,z\ |\ z=[x,y],[x,z]=[y,z]=1\right>$ be the 3-dimensional Heisenberg group, which is $2$-step nilpotent, and $[A,A]=\left<z\right>$. Suppose $T(A)=\{a,b,c\}$ for some $a,b,c\in A\backslash\{id\}$ such that $abc=z^m$ for some $m\in\mathbb{Z}$, which occurs if we consider $g=a\alpha b\beta c\gamma\in [G,G]$ with $G=A*B$ and $\alpha,\beta,\gamma$ in some group $B$.
		
		Let us look at a 2-dimensional subcone of $V_A$ spanned by $P=(a,b)+(b,c)+(c,a)$ and $N=(a,c)+(b,a)+(c,b)$, and find all $(u,v)\in \mathbb{Z}^2_+$ such that $uP+vN$ is a disk vector. For fixed $(u,v)$, the vector $uP+vN$ is a disk vector if and only if there is some cyclic word $w$ in $a,b,c$ such that 
		\begin{enumerate}
			\item $w$ represents $id$ in $A$;
			\item $w$ contains $u$ copies of each of $ab$, $bc$, $ca$ and $v$ copies of $ac$, $cb$, $ba$ as subwords.
		\end{enumerate}
		
		Notice that since $abc=z^m$, $a$ commutes with $bc$ and $cb$, and similarly for $b$ and $c$; we also have $[a,b]=[b,c]=[c,a]=z^n$ for some $n$. Any cyclic word with equal number (say, $k$) of $a,b,c$ in it can be written uniquely as $(abc)^k[a,b]^r$ for some $r\in\mathbb{Z}$ by moving letters around. One can prove by induction (see Appendix) that for fixed $(u,v)$, and the set of cyclic words satisfying the restriction (2) above, the set of $r$ that can appear is
		\begin{equation}\label{Suv}
		S_{u,v}=\left\{\begin{array}{ll}
		\interval{-\frac{v(v+1)}{2}}{\frac{v(v-3)}{2}}\cap\mathbb{Z}& u=v\\
		\interval{-\frac{v(v+1)}{2}}{\frac{v(v-1)}{2}}\cap\mathbb{Z}& u>v\\
		\interval{-\frac{u(u+1)}{2}-v}{\frac{u(u-1)}{2}-v}\cap\mathbb{Z}& u<v.
		\end{array}\right.
		\end{equation}
		
		Therefore, $uP+vN$ is a disk vector if and only if $(abc)^{u+v}[a,b]^r=id$ for some $r\in S_{u,v}$, or equivalently $m(u+v)+nr=0$ has a solution for $r\in S_{u,v}$. For example if $\frac{m}{n}=\frac{1}{2}$, the set of $(u,v)$ for which $uP+vN$ is a disk vector is
		$$\left\{\left.(u,v)\in\mathbb{Z}^2_+\right|1\le v\le u\le v^2\text{ or } u<v\le u^2; \text{ and }u\equiv v \mod 2\right\},$$ which is the set of integer points in the shaded region in Figure \ref{diskv}, bounded by two parabolas, such that the two coordinates have the same parity.
	\end{example}
	
	\begin{figure}
		\begin{center}
			\resizebox{200pt}{200pt}{\includegraphics{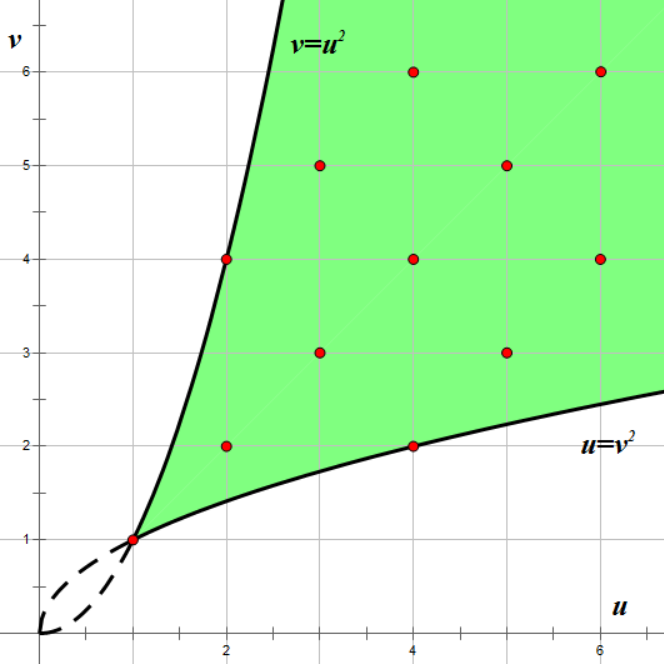}}
			\caption{Disk vectors in a subcone of $V_A$ with $A=\mathcal{H}_3(\mathbb{Z})$.} \label{diskv}
		\end{center}
	\end{figure}
	
	Nevertheless, Corollary \ref{keyob}, a consequence of our key Lemma \ref{keylemma}, shows that $\conv(\mathcal{D}_A)+V_A$ is always a \emph{finitely sided} rational convex cone no matter how complicated $\mathcal{D}_A$ is. The key reason is that an integer point in a rational cone cannot be too close to a given face unless it lies on that face.
	
	We first recall some standard definitions. Let $X=\mathbb{R}^n$.
	\begin{definition}
		A convex \textit{polyhedral cone} in $X$ is a set $C=\{x|f_i(x)\ge0,\forall i\in I\}$, where each $f_i:X\to\mathbb{R}$ is a linear map and $I$ is \emph{finite}. In addition, $C$ is \textit{rational} if each $f_i$ is rational. $C$ is \textit{simplicial} if $f_i$ are linearly independent in $X^*$, or equivalently, $C$ is the convex cone spanned by some linearly independent vectors.
	\end{definition}
	It follows that if $C$ is simplicial and rational, then $C$ is the convex cone spanned by some linearly independent rational vectors. Here is the key observation.
	\begin{lemma}\label{keylemma}
		Let $C$ be a rational polyhedral cone in $X$, and $D$ be a subset of $C\cap(\frac{1}{L}\cdot\mathbb{Z})^n$ for some $L\in\mathbb{Z}_+$. Then there is a finite subset $D'$ of $D$ such that $D+C=D'+C$.
	\end{lemma}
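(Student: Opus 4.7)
The plan is to interpret the equality $D+C=D'+C$ as a well-quasi-order statement. Define a preorder on $X$ by setting $x\preceq y$ iff $y-x\in C$; transitivity and reflexivity follow from $C$ being a convex cone. Since $D'\subseteq D$, the inclusion $D'+C\subseteq D+C$ is automatic, so it suffices to produce a finite subset $D'\subseteq D$ such that every $d\in D$ admits some $d'\in D'$ with $d'\preceq d$.

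The key reduction is to project into a non-negative orthant via the facet inequalities of $C$ and appeal to Dickson's lemma. By the Minkowski--Weyl description of rational polyhedral cones, write $C=\{x\in X:\langle a_i,x\rangle\ge 0,\ i=1,\ldots,r\}$ for some rational $a_1,\ldots,a_r$, and define the linear map
$$\Psi:X\to\mathbb{R}^r,\qquad \Psi(x)=(\langle a_1,x\rangle,\ldots,\langle a_r,x\rangle).$$
By construction, $x\preceq y$ if and only if every coordinate of $\Psi(y)-\Psi(x)$ is non-negative, so $\Psi$ both preserves and reflects the preorder. Rationality of the $a_i$ together with $D\subseteq(\tfrac{1}{L}\mathbb{Z})^n$ yields an integer $M$ with $\Psi(D)\subseteq(\tfrac{1}{M}\mathbb{Z})^r$, and $\Psi(D)\subseteq\mathbb{R}_{\ge 0}^r$ because $D\subseteq C$.

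Dickson's lemma now applies to $M\cdot\Psi(D)\subseteq\mathbb{Z}_{\ge 0}^r$: this set has only finitely many minimal elements under componentwise order. Let $y_1,\ldots,y_k$ be the corresponding minimal elements of $\Psi(D)$, pick one preimage $d_j\in D$ of each $y_j$, and set $D'=\{d_1,\ldots,d_k\}$. For any $d\in D$, $\Psi(d)$ dominates some $y_j$ componentwise, which reflects back to $d-d_j\in C$, so $d\in d_j+C\subseteq D'+C$. This yields $D+C=D'+C$.

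The main subtlety is that $\Psi$ need not be injective when $C$ has nontrivial lineality space, but this is harmless: the argument only uses that $\Psi$ preserves and reflects $\preceq$, and choosing arbitrary preimages of the minimal $y_j$ suffices. Rationality of the $a_i$ is used precisely to place $\Psi(D)$ in a discrete subset of $\mathbb{R}_{\ge 0}^r$; without it $\Psi(D)$ could be dense and Dickson's lemma would not apply.
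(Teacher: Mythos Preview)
Your proof is correct and takes a genuinely different route from the paper's. The paper first decomposes $C$ into simplicial rational cones, then changes basis to reduce each simplicial piece to the standard orthant $\mathbb{R}_{\ge 0}^k$, and finally runs an explicit induction on $k$: project $D$ onto each coordinate hyperplane, apply the inductive hypothesis there, lift back, and observe that the remaining points of $D$ are confined to a bounded box. This induction is, in effect, a hands-on proof of Dickson's lemma for the orthant.

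Your argument sidesteps all of this by working with the dual (facet) description of $C$: the map $\Psi$ sends $C$-dominance to coordinatewise dominance in $\mathbb{R}_{\ge 0}^r$ tautologically, rationality puts $\Psi(D)$ in a lattice, and Dickson's lemma finishes the job. This is shorter and more conceptual, and it avoids the simplicial decomposition entirely. The paper's approach, by contrast, is more self-contained (it does not cite Dickson's lemma by name) and its explicit inductive construction gives slightly more control over what $D'$ looks like, which could matter for algorithmic purposes; but for the purely existential statement your argument is cleaner. Your remark about the lineality space is apt and handles the only subtlety: since $\Psi$ \emph{reflects} the preorder (not merely preserves it), non-injectivity causes no trouble.
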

	\begin{proof}
		The claim is trivially true if $D$ is empty. From now on we assume $D$ to be nonempty. We first reduce the problem to the case where $C$ is simplicial. Decompose $C$ into finitely many simplicial rational cones $C_i$, $i=1,\dots,k$. Suppose the claim is true for simplicial rational cones, then let $D_i=D\cap C_i$ and apply the claim to each pair $(C_i,D_i)$, we get some finite sets $D_i'$ such that $D_i'+C_i=D_i+C_i$. Now let $D'=\cup D_i'$. It suffices to show that $D+C\subset D'+C$. Actually, for each point $d+c\in D+C$ where $d\in D$ and $c\in C$, we have $d\in D_i$ for some $i$ since $D=\cup D_i$. Now $D_i\subset D_i+C_i=D_i'+C_i$, thus there exists $d'\in D_i'\subset D'$ and $c'\in C_i\subset C$ such that $d=d'+c'$, thus $d+c=d'+(c'+c)$ lies in $D'+C$.
		
		Therefore, we only need to show the claim for any simplicial rational cone $C$, which can be further reduced as follows to the case where $C$ is the first orthant of $X$. Let $c_i$, $i=1,\dots,k$ be the linearly independent rational vectors that span $C$. Extend this to a rational basis of $X$ and take a linear transformation $f$ of $X$ by sending $c_i$ to $e_i$, where $\{e_i\}_{i=1}^n$ is the standard basis. In terms of matrices (with respect to the basis $e_i$), $f$ is an $n\times n$ matrix with rational entries. Let $N$ be the lcm of the denominators of the entries. Then the image of $(\frac{1}{L}\cdot\mathbb{Z})^n$ under $f$ lies in $(\frac{1}{LN}\cdot\mathbb{Z})^n$, hence $f(D)$ is a subset of $(\frac{1}{LN}\cdot\mathbb{Z})^n$.
		
		Thus we only need to show the claim for $$C=\{x=(x_1,\dots,x_n)|x_i\ge0,i=1,\dots,k;\ x_i=0, i>k\}.$$ Up to applying the map $v\mapsto Lv$ (our statement is irrelevant to the scale), we assume without loss of generality that $L=1$ in the sequel, i.e. $D$ lies in the integer lattice. Now we may ignore $e_i$ for $i>k$. Thus we assume without loss of generality that $C$ is the first orthant of $X$ and proceed by induction on the dimension $n$ (note that $n$ is actually the dimension of $C$, not $X$). See Figure \ref{keyex} for an illustration of our induction in a special case.
		
		The base case $n=1$ is obvious. For the inductive step, fix any $i\in\{1,\dots,n\}$, let $F_i=\{x\in C|x_i=0\}$ be the $i$-th face of $C$ and $p_i$ be the projection from $C$ to $F_i$. Let $D_i=p_i(D)$, which lies in the integer lattice and $F_i$. Thus by induction hypothesis (applied to $(F_i,D_i)$), there is a finite set $D_i'\subset D_i$ such that $D_i'+F_i=D_i+F_i$. For each $x'\in D_i'$, choose some $x''\in D$ such that $p_i(x'')=x'$. Hence there is a finite set $D_i''\subset D$ which projects to $D_i'$ under $p_i$. Note the following simple but crucial fact: for any $x,y\in C$, if $y_i\ge x_i$, then $y$ lies in $x+C$ if and only if $p_i(y)\in p_i(x)+F_i$. Thus if we take $M_i=\max\{x_i|x\in D_i''\}$, then for any point $y$ with $y_i\ge M_i$, we have $y\in D_i''+C$ if and only if $p_i(y)\in D_i'+F_i=D_i+F_i$. Therefore, if $y_i\ge M_i$ and $y\in D$, then $p_i(y)\in D_i\subset D_i+F_i$, which means $y\in D_i''+C$. In other words, if $y\in D\backslash (D_i''+C)$, then $0\le y_i<M_i$.
		
		\begin{figure}
			\begin{center}
				\resizebox{210pt}{215pt}{\includegraphics{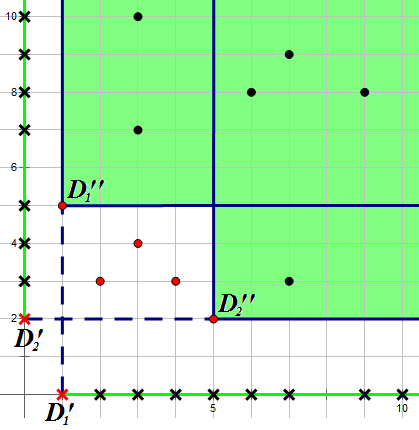}}
				\caption{An illustration of our induction argument in the case $X=\mathbb{R}^2$ and $C=\mathbb{R}_{\ge0}^2$. The dots are points in $D$ and the crosses on the axes are $D_1$ and $D_2$; the red crosses are $D_1'$ and $D_2'$, whose lifts are $D_1''$ and $D_2''$. The shaded region $D''+C$ contains the majority (black dots) of $D$, thus we can take $D'$ to be the red dots.} \label{keyex}
			\end{center}
		\end{figure}
		
		Now let $i$ range from $1$ to $n$ and take $D''=\cup D_i''$. By what we showed above, if $y\in D\backslash(D''+C)=D\backslash[\cup(D_i''+C)]$, then $0\le y_i<M_i$ for each $i$. Hence $D\backslash(D''+C)$ is a bounded subset of $\mathbb{Z}^n$ and therefore finite. Take $D'=D''\cup [D\backslash(D''+C)]$, the claim follows.
	\end{proof}
	\begin{cor}
		Let $X$, $C$ and $D$ be as above, then $\conv(C+D)=\conv(D)+C$ is a closed rational polyhedron.
	\end{cor}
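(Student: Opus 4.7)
The plan is to verify the set-theoretic equality $\conv(C+D)=\conv(D)+C$ first, and then reduce the polyhedrality claim to the finite case supplied by Lemma \ref{keylemma}.

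For the equality, I would argue by elementary rewriting. The inclusion $\conv(D)+C \subseteq \conv(C+D)$ follows because any $\sum \lambda_i d_i + c$ with $\sum \lambda_i = 1$ equals $\sum \lambda_i (d_i+c)$, which is a convex combination of points of $C+D$. The reverse inclusion uses convexity of the cone $C$: a convex combination $\sum \lambda_i(d_i+c_i)$ splits as $\sum \lambda_i d_i + \sum \lambda_i c_i$, where the first summand lies in $\conv(D)$ and the second in $C$. (The case $D=\emptyset$ is vacuous.)

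Next, I would invoke Lemma \ref{keylemma} to fix a finite $D' \subseteq D$ with $D+C = D'+C$, and upgrade this to $\conv(D)+C = \conv(D')+C$. The inclusion $\supseteq$ is immediate from $D' \subseteq D$. For $\subseteq$, given $x = \sum \lambda_i d_i + c \in \conv(D)+C$, each $d_i \in D \subseteq D'+C$ can be written as $d'_i + c_i$ with $d'_i \in D'$ and $c_i \in C$, yielding
\[
x = \sum \lambda_i d'_i + \Bigl(\sum \lambda_i c_i + c\Bigr) \in \conv(D') + C.
\]

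Finally, since $D'$ is a finite subset of $(\tfrac{1}{L}\mathbb{Z})^n$, its convex hull $\conv(D')$ is a rational polytope; and $C$ is a rational polyhedral cone by hypothesis. The Minkowski sum of a rational polytope and a rational polyhedral cone is a closed rational polyhedron by the Minkowski--Weyl theorem, which completes the proof. There is no real obstacle here: the corollary is essentially a packaging of Lemma \ref{keylemma} with a standard convex-geometric fact, the only subtlety being the verification that the identity $D+C=D'+C$ passes to convex hulls, which was handled above.
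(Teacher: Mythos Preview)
Your proof is correct and follows essentially the same route as the paper's: both reduce to the finite set $D'$ from Lemma \ref{keylemma}, identify $\conv(C+D)=\conv(D)+C=\conv(D')+C$, and observe that the Minkowski sum of a rational polytope with a rational polyhedral cone is a closed rational polyhedron. Your argument is more explicit (you spell out the passage from $D+C=D'+C$ to $\conv(D)+C=\conv(D')+C$, which the paper leaves implicit), but the ideas are the same.
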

	\begin{proof}
		By the lemma above, $\conv(C+D)$ has finitely many rational (actually integral) vertices (a subset of $D'$), thus $\conv(C+D)$ is a rational polyhedron. $\conv(C+D)=\conv(D)+C$ since the Minkowski sum commutes with taking convex hull and $C$ is convex. It is closed since $\conv(C+D)=\conv(C+D')=\conv(D')+C$ is the Minkowski sum of a compact set and a closed set.
	\end{proof}
	Applying this to $C=V_A$ and $D=\mathcal{D}_A$, we get
	\begin{cor}\label{keyob}
		In any case, $\conv(\mathcal{D}_A)+V_A$ is a (closed) rational polyhedron. Thus $\kappa_A$ is the minimum of finitely many rational linear functions. If scl vanishes on $B_1^H(A)$, then $\chi_{o,A}$ (originally defined on rational vectors in $V_A$) has a (unique) continuous extension $\kappa_A-|\cdot|/2$, which is the minimum of finitely many rational linear functions.
	\end{cor}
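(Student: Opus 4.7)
The plan is to deduce all three assertions essentially from the previous corollary together with Lemmas \ref{minkfunc} and \ref{chi0scl0}. For the first claim, I would apply the previous corollary with $C = V_A$ and $D = \mathcal{D}_A$. Every disk vector comes from an actual disk $D \to K_A$ and is defined by counting integer numbers of components of $F$ and integer covering degrees on self-loops; hence $\mathcal{D}_A \subset V_A \cap \mathbb{Z}^{T_2(A)}$ and we may take $L = 1$. The corollary then immediately gives that $\conv(\mathcal{D}_A) + V_A$ is a closed rational polyhedron.

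For the second claim, I would invoke Lemma \ref{minkfunc}, which tells us that $\{v \in V_A : \kappa_A(v) = 1\}$ equals the boundary (in $V_A$) of $\conv(\mathcal{D}_A) + V_A$. Since this polyhedron is now known to be rational and finitely sided, its boundary decomposes into finitely many facets, each supported by a rational hyperplane of the form $\{\ell_i = 1\}$ for some rational linear functional $\ell_i$ on the ambient space $C_2(A)$. Positive homogeneity of $\kappa_A$ (Lemma \ref{minkfunc}) means that once the level set at height $1$ is pinned down, $\kappa_A$ is determined on all rays through the origin, and concavity forces $\kappa_A(v) = \min_i \ell_i(v)$ on $V_A$. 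This gives the desired min-of-rational-linear description of $\kappa_A$.

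For the third claim, assume $\scl$ vanishes on $B_1^H(A)$. Lemma \ref{chi0scl0} asserts $\chi_{o,A}(v) = \kappa_A(v) - \tfrac{1}{2}|v|$ on rational vectors of $V_A$. Since $|\cdot|$ is linear and $\kappa_A$ was just shown to be the minimum of finitely many rational linear functions, the right-hand side $\kappa_A - |\cdot|/2$ is already defined on all of $V_A$, is continuous there, and is itself the minimum of finitely many rational linear functions. By density of rational vectors in $V_A$ it is the unique continuous extension of $\chi_{o,A}$, finishing the proof. The main obstacle is the second paragraph: one must correctly read off a min-of-linear-functions formula for $\kappa_A$ from the polyhedral description, being careful that only the facets of $\conv(\mathcal{D}_A) + V_A$ meeting the interior of $V_A$ contribute linear pieces (facets lying inside $\partial V_A$ are invisible to $\kappa_A$), and that homogeneity genuinely lets us propagate the level-set data to all of $V_A$.
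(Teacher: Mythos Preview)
Your proposal is correct and follows exactly the paper's approach: the paper's proof is a three-sentence argument that invokes the previous corollary for the first assertion, Lemma \ref{minkfunc} for the second, and Lemma \ref{chi0scl0} for the third, and you have simply unpacked each of these deductions in more detail (including the standard passage from the polyhedral level set $\{\kappa_A=1\}$ to a min-of-linear-functions formula via homogeneity and concavity). The caveat you flag about which facets contribute is a fair technical point but does not represent a divergence from the paper's strategy.
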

	\begin{proof}The first assertion follows immediately from the corollary above. Combining Lemma \ref{minkfunc}, we get the second assertion. Combining Lemma \ref{chi0scl0}, we get the last assertion. 
	\end{proof}
	
	\subsection{Rationality Theorem} Corollary \ref{keyob} generalizes Lemma 3.12 in \cite{DSSS} by weakening the assumption ``(free) abelian'' to ``scl vanishes''. Now following the argument in \cite{DSSS}, we get our first main result:
	\setcounter{theoremx}{0}
	\begin{theoremx}[Rationality]\label{PQL}
		Let $G_\lambda$ ($\lambda\in\Lambda$) be a family of groups where scl vanishes on each $G_\lambda$, then scl is piecewise rational linear on the free product $G=*_\lambda G_\lambda$.
	\end{theoremx}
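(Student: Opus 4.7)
My plan is to reformulate $\scl$ as a parametric linear program whose objective has only finitely many affine pieces (thanks to Corollary \ref{keyob}) and then invoke classical rationality for parametric LPs. Fix a finite-dimensional rational subspace $V\subset B_1^H(G)$. By Remark \ref{finiteness}, $V$ sits inside $B_1^H$ of a free product of finitely many factors, so I may assume $G=G_1*\cdots*G_k$. The two-factor setup of Section \ref{sec3} extends verbatim from $A*B$ to $k$ factors: choose a finite set $Z$ of non-torsion conjugacy classes with $V\subset\langle Z\rangle\cap B_1^H(G)$, build $L\to K=K_1\vee\cdots\vee K_k$, and encode an admissible surface $f:S\to K$ by the tuple $(v(S_1),\ldots,v(S_k))\in V_1\times\cdots\times V_k$, where $S_i=f^{-1}(K_i)$. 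The identity $\chi(S)=\sum_{i=1}^k\chi_o(S_i)$ is immediate from the discussion preceding Definition \ref{VA}.

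The tuples arising from actual surfaces are, up to integer scaling, precisely the rational points of a rational polyhedral cone $W\subset V_1\times\cdots\times V_k$ cut out by two families of rational linear equations: (i) \emph{gluing} constraints equating, for each ordered pair of factors $(i,j)$, the multiplicity of each $F$-arc type as seen from the $V_i$ side and from the $V_j$ side; and (ii) \emph{boundary} constraints asserting that the covering degrees induced on the components of $L$ are proportional to the coefficients of a chain $c\in V$. Let $\pi:W\to V$ be the rational linear map returning the represented chain; Lemma \ref{Qvec} guarantees that every rational point of $W$ is, after integer scaling, realized by some surface decomposition.

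Because $\scl$ vanishes on each $G_i$, Corollary \ref{keyob} says each $\chi_{o,i}$ extends continuously to $V_i$ as the minimum of finitely many rational linear functions. Normalizing by homogeneity and passing from integer to rational tuples via Lemma \ref{Qvec} plus continuity, one obtains
\[
2\,\scl(c)=-\sup\Bigl\{\textstyle\sum_{i=1}^k\chi_{o,i}(v_i)\;\Big|\;(v_1,\ldots,v_k)\in W,\ \pi(v_1,\ldots,v_k)=c\Bigr\}.
\]
The objective is concave and piecewise rational linear with finitely many affine pieces; the feasible set is a rational polyhedral fiber that varies rationally linearly with $c$; and the problem is homogeneous of degree one. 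Subdividing $W$ into the rational subcones on which a single affine piece of the objective is minimal and applying standard parametric LP rationality on each piece produces a piecewise rational linear function $c\mapsto\scl(c)$ on $V$.

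The genuine obstacle is exactly the one that Corollary \ref{keyob} has already removed: a priori $\mathcal{D}_i$ can be infinite (Example \ref{diskex}), so it is not obvious that $\chi_{o,i}$ has finitely many linear pieces. With that settled, the remaining steps are bookkeeping parallel to the abelian case in \cite{DSSS}: writing the gluing and boundary equations explicitly as rational linear conditions, justifying the passage from integer-point sup to rational-vector sup via Lemma \ref{Qvec} and continuity, and verifying that a parametric LP with rational piecewise-linear concave objective over a rational polyhedral feasible set has a rational piecewise-linear value function.
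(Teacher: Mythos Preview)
Your proposal is correct and follows essentially the same route as the paper's own proof: reduce to finitely many factors, build the rational gluing cone $Y$ (your $W$) inside $\prod_i V_{G_i}$, use Corollary~\ref{keyob} to express each $\chi_{o,G_i}$ as a minimum of finitely many rational linear functions, and then read off piecewise rational linearity of $\scl$ from the resulting parametric linear program on the fibers $Y_l=\pi^{-1}(c)$. The only cosmetic differences are that the paper wedges the $K(G_i,1)$'s along a tree (so no three factors meet at one point) rather than at a single point, and separates your ``boundary constraint'' (ii) out as the linear map $d:Y\to H_1(L)$ rather than building it into the definition of the cone.
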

	Given Corollary \ref{keyob}, the proof is the same as that in \cite{DSSS}. We include it for completeness.
	\begin{proof}
		We first focus on the case of $G=A*B$.
		
		As in Section \ref{sec3}, fix a finite subset $Z$ of nontrivial conjugacy classes in $G$ and define the 1-manifold $L$. Also let $T_2(A)$, $V_A$ and $\chi_{o,A}$ be as above, similarly define these for $B$. Let $Y\subset V_A\times V_B$ be the set of pairs $(v_A,v_B)$ that can be ``glued up'', i.e.: for any $(\tau,\tau')\in T_2(A)$ with $\tau$ not a self-loop (then neither is $\tau'$), we can find a unique $(\sigma,\sigma')\in T_2(B)$ such that $\sigma'$ is the oriented arc in the 1-manifold $L$ following $\tau$, and $\tau'$ follows $\sigma$; we require the $(\tau,\tau')$-coordinate of $v_A$ equals the $(\sigma,\sigma')$-coordinate of $v_B$ for any such $(\tau,\tau')$ and $(\sigma,\sigma')$. Then $Y$ is still a rational cone. Define $\chi_o(v_A,v_B)=\chi_{o,A}(v_A)+\chi_{o,B}(v_B)$ for any $(v_A,v_B)\in Y$. Then by Corollary \ref{keyob}, $\chi_o$ is the minimum of finitely many rational linear functions. Finally define $d:Y\to H_1(L)$ to be the unique rational linear map that $d(y)=\partial f_*(\partial S)$ in $H_1(L)$ whenever $y=(v(S_A),v(S_B))\in Y$ for some surface $S=S_A+S_B$. 
		
		Now for any $l\in H_1(L)$ that corresponds to a chain $z\in\left<Z\right>\cap B_1^H(G)$, let $Y_l=d^{-1}(l)\subset Y$, then we have
		\begin{equation}
			\scl(z)=-\max_{y\in Y_l}\chi_o(y)/2.
		\end{equation}
		Notice that $Y_l$ is a finitely sided convex polyhedron since $Y$ is, and it is rational if $l$ is. Thus scl is computed as a convex programming problem, which can be reduced to a linear programming problem by introducing slack variables because $\chi_o$ is the minimum of finitely many rational linear functions. Then it follows that scl is piecewise rational linear on $\left<Z\right>\cap B_1^H(G)$. Since $Z$ is arbitrary, the conclusion follows.
		
		For the general case, since every rational subspace only involves finitely many factors, it suffices to show that the conclusion holds when $\Lambda$ is finite according to Remark \ref{finiteness}. Now if $\Lambda$ is finite, we can build $K(G,1)$ by gluing up $K(G_\lambda,1)$'s so that no three factors are attached at the same point. This guarantees that surfaces $S_{G_\lambda}$ are glued up in a simple way. Then define $T_2(G_\lambda)$, $V_{G_\lambda}$ and $\chi_{o,G_\lambda}$ as before. Similarly define $Y$ by writing down the suitable gluing condition. Then the same argument above shows that scl is piecewise rational linear.
	\end{proof}
	\begin{remark}\label{vanscl}
		Many groups have vanishing scl. There are three main sources:
		\begin{enumerate}
			\item small groups such as amenable groups, which include finite groups and solvable groups;
			\item irreducible lattices of higher rank Lie groups (see Theorem 5.26 in \cite{DSCL} for a precise statement);
			\item some transformation groups such as Homeo$^+(S^1)$ \cite[Theorem 2.43]{DSCL}, subgroups of PL$^+(I)$ \cite[Theorem A]{DPL}, Homeo$_c(\mathbb{R}^n)$, and Thompson-Stein groups $T_{p,q}$ with $\text{gcd}(p-1,q-1)=1$ (see \cite[Lemma 3.6]{ZIRRSCL} or \cite[Lemma 5.15]{DSCL}).
		\end{enumerate}
	\end{remark}
	\begin{remark}
		The proof actually gives a method to determine scl in free products when scl vanishes on each factor. It produces an algorithm as long as one can determine the vertices of the convex cone $\conv(V_{A_\lambda}+\mathcal{D}_{A_\lambda})$, which seems hard in general since it requires some knowledge of $\mathcal{D}_{A_\lambda}$. The method, however, is still helpful to study scl in families.
	\end{remark}
	\begin{remark}\label{redun}
		When considering $Y_l$, it is redundant in the following sense to impose $h=0$ in the definition of $V_A$ (see Definition \ref{VA}). If we define $V_A'$ to be non-negative vectors $v\in C_2(A)$ satisfying $\partial(v)=0$, then $V_A$ is the sub-polyhedral cone of $V_A'$ on which $h=0$. We can similarly define $Y$ and the linear map $d$ using $V_A'$ instead of $V_A$, denote them by $Y'$ and $d'$. It turns out that if $l\in H_1(L)$ corresponds to homologically trivial chain in $B_1(G)$, then $d'^{-1}(l)\subset Y'$ coincides with $Y_l$.
		%
		%In fact it seems to be more convenient to use $V_A'$ when calculating scl since $V_A'$ is easier to determine, especially when we are not sure about the image of elements of $T_1(A)$ in $H_1(A;\mathbb{R})$. See the detailed calculation in the proof of Proposition \ref{selfprod} to get a feeling. But $V_A$ works more efficiently in algorithm when calculating scl of a family of chains.
	\end{remark}
	\begin{cor}\label{isoembscl0}
		Let $f_\lambda:A_\lambda\to B_\lambda$ be a family of injective group homomorphisms. Suppose scl vanishes on each $A_\lambda$ and $B_\lambda$, then the induced map $f:*_\lambda A_\lambda\to*_\lambda B_\lambda$ is an isometric embedding with respect to scl. More precisely, for any $c\in B_1^H(*_\lambda A_\lambda)$, we have $\scl(c)=\scl(f(c))$.
	\end{cor}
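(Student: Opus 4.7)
The plan is to transport the polyhedral optimization problem of Section \ref{sec4} through $f$ and observe that it is literally the same problem on both sides. Monotonicity of scl gives $\scl_{*B_\lambda}(f(c)) \leq \scl_{*A_\lambda}(c)$ immediately, so the task is the reverse inequality. Only finitely many factors are involved in $c$, so by Remark \ref{finiteness} we may assume $\Lambda$ is finite. Choose a finite set $Z$ of cyclically reduced conjugacy classes in $*_\lambda A_\lambda$ containing the support of $c$, and let $Z' = f(Z)$ in $*_\lambda B_\lambda$. Since each $f_\lambda$ is injective, words in $Z'$ remain cyclically reduced with exactly the same alternation pattern across factors, so $f_\lambda$ induces a bijection $T(A_\lambda) \to T(B_\lambda)$ which extends to a linear isomorphism $f_*: C_2(A_\lambda) \to C_2(B_\lambda)$ on basis via $(\tau, \tau') \mapsto (f_\lambda(\tau), f_\lambda(\tau'))$, preserving non-negativity and the combinatorial boundary operator $\partial$.

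As $c$ is a 1-boundary, Remark \ref{redun} allows us to drop the constraint $h = 0$ and work with $V'_{A_\lambda}$ and $V'_{B_\lambda}$; then $f_*$ restricts to a bijection $V'_{A_\lambda} \to V'_{B_\lambda}$. The heart of the argument is that $f_*$ also sends $\mathcal{D}_{A_\lambda}$ bijectively onto $\mathcal{D}_{B_\lambda}$: a disk vector is by definition encoded by a cyclic word $w = \tau_1 \tau_2 \cdots \tau_k$ in letters from $T(A_\lambda)$ with $w = 1$ in $A_\lambda$, and applying the homomorphism $f_\lambda$ gives $f_\lambda(\tau_1) \cdots f_\lambda(\tau_k) = 1$ in $B_\lambda$; conversely any such relation in $B_\lambda$ among images $f_\lambda(\tau_i)$ pulls back via injectivity of $f_\lambda$ to a relation in $A_\lambda$. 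Consequently $\kappa_{A_\lambda} = \kappa_{B_\lambda} \circ f_*$, and the combinatorial quantity $|v|$ is automatically preserved. Since scl vanishes on each $A_\lambda$ and $B_\lambda$, Lemma \ref{chi0scl0} then yields $\chi_{o, A_\lambda} = \chi_{o, B_\lambda} \circ f_*$.

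Finally, the gluing conditions defining the polyhedron $Y$ are purely combinatorial, determined by how oriented arcs of $L$ meet at the wedge point, so the bijections for individual factors assemble into a bijection $Y'^A \to Y'^B$ intertwining the boundary-datum map $d$ and the total orbifold Euler characteristic $\chi_o$. Applying the formula $\scl(z) = -\max_{y \in Y_l} \chi_o(y)/2$ on both sides with the common $l \in H_1(L)$ corresponding to $c$ yields $\scl_{*A_\lambda}(c) = \scl_{*B_\lambda}(f(c))$. The one subtlety to navigate is the $h = 0$ constraint: because $f_\lambda$ need not be surjective on abelianizations, the restricted map $V_{A_\lambda} \to V_{B_\lambda}$ itself can fail to be surjective, which is precisely why Remark \ref{redun} (letting us enlarge to $V'$) is essential for the bijection to be clean.
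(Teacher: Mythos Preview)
Your proof is correct and follows essentially the same approach as the paper's own argument: reduce to finitely many factors, replace $V_{A_\lambda}$ and $V_{B_\lambda}$ by $V'_{A_\lambda}$ and $V'_{B_\lambda}$ via Remark \ref{redun}, use injectivity of $f_\lambda$ to identify the disk-vector sets and hence the functions $\kappa$, and conclude that the two optimization problems computing scl are identical. You spell out a bit more explicitly why injectivity yields the bijection $\mathcal{D}_{A_\lambda}\leftrightarrow\mathcal{D}_{B_\lambda}$ and why dropping the $h=0$ constraint is the crucial maneuver, but the structure of the argument is the same as the paper's.
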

	\begin{proof}
		Again this reduces to the case of finitely many factors. Now run the process above on both sides using $V_{A_\lambda}'$ and $V_{B_\lambda}'$ as in Remark \ref{redun} instead of $V_{A_\lambda}$ and $V_{B_\lambda}$. Then $f_{\lambda}$ identifies $V_{A_\lambda}'$ and $V_{B_\lambda}'$ even if its induced map on homology may not be injective and thus may not identify $V_{A_\lambda}$ and $V_{B_\lambda}$. Injectivity of $f_\lambda$ ensures that $D_{A_\lambda}$ is identified with $D_{B_\lambda}$, thus $\kappa_{A_\lambda}$ is identified with $\kappa_{B_\lambda}$. Then the computation of scl on two sides are results of the same linear programming problem, thus $f$ is isometric for scl.
	\end{proof}
	The condition that scl vanishes on each $A_\lambda$ and $B_\lambda$ ensures that each $f_\lambda$ is isometric. Thus it is natural to ask whether it is enough to get the conclusion only assuming each $f_\lambda$ to be an isometric embedding. Our second main result confirms this. To prove it, we will reveal how scl in free products is determined when scl does not necessarily vanish on each factor.
	
	\subsection{Scl in General Free Products} Now we return to the general case. Similar to the special case discussed above, we need an analog of Lemma \ref{chi0scl0} to reveal the structure of $\chi_{o,A}(v)$. Unlike the case where $\scl$ vanishes on factors, the second term on the right hand side of equation (\ref{chi0}) cannot be computed via $\kappa_{A}$ as in Lemma \ref{chi0} anymore. With notations as before and $\mathcal{D}_A$ defined as in Definition \ref{diskvdef}, we make the following
	\begin{definition}
		For any rational $v\in V_A$, define $$\pscl_A(v)\defeq\inf\left\{\left.\frac{-\chi^-(S_A)}{2n}\right|{v(S_A)=nv}\newline\text{ for some }n\in\mathbb{N}\right\}.$$
		
		Let $\cone_\mathbb{Q}(\mathcal{D}_A)\defeq\{\sum t_id_i|t_i\in\mathbb{Q}, t_i\ge0, d_i\in\mathcal{D}_A\}$. Equivalently, $\cone_\mathbb{Q}(\mathcal{D}_A)$ is the set of rational points in $\cone(\mathcal{D}_A)$. For any $x\in\cone_\mathbb{Q}(\mathcal{D}_A)$, define $$\eta_A(x)\defeq\sup\left\{\left.\sum t_i\right|x=\sum t_id_i \text{ with }t_i\in\mathbb{Q}, t_i\ge0, d_i\in \mathcal{D}_A\right\}.$$
	\end{definition}
	\begin{lemma}\label{chi0gen}
		For any rational $v\in V_A$, we have 
		\begin{eqnarray*}
		\chi_{o,A}(v)&=&-\frac{1}{2}|v|+\sup\left\{\left.-2\pscl_A(v-d)+\eta_A(d)\right|\  d\in\cone_\mathbb{Q}(\mathcal{D}_A),v-d\in V_A\right\}\\
		&\le& -\frac{1}{2}|v|+\kappa_{A}(v).
		\end{eqnarray*}
	\end{lemma}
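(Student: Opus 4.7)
\medskip
\noindent\textbf{Proof proposal.} The plan is to start from the rewriting of $\chi_{o,A}(v)$ given by equation (\ref{chi0}), and show that the unfamiliar sup
$$\sup\left\{\left.\frac{\chi(S_A)}{n}\right|\ v(S_A)=nv\right\}$$
equals $\sup\{-2\pscl_A(v-d)+\eta_A(d)\mid d\in\cone_\mathbb{Q}(\mathcal{D}_A),\ v-d\in V_A\}$. The key idea is to separate each admissible $S_A$ into its disk components and its non-disk components: disks contribute $+1$ to $\chi$ but $0$ to $\chi^-$, while the remaining components behave exactly like $\pscl$-efficient surfaces.

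For the ``$\le$'' direction, given $S_A$ with $v(S_A)=nv$, first compress any sphere components (which does not change $v(S_A)$ or $\chi(S_A)$ when arranged correctly) and then write $S_A=S_A'\sqcup D_1\sqcup\cdots\sqcup D_k$ where the $D_i$ are all the disk components and $S_A'$ contains none. Set $d\defeq\sum_{i=1}^k v(D_i)/n\in\cone_\mathbb{Q}(\mathcal{D}_A)$, so $v-d=v(S_A')/n\in V_A$. Then $\chi(S_A)=\chi(S_A')+k=\chi^-(S_A')+k$ (since $S_A'$ has no components of non-negative Euler characteristic), and
$$\frac{\chi(S_A)}{n}=\frac{\chi^-(S_A')}{n}+\frac{k}{n}\le-2\pscl_A(v-d)+\eta_A(d),$$
because $-\chi^-(S_A')/(2n)\ge\pscl_A(v-d)$ by the definition of $\pscl_A$, and $d$ admits the expression $\sum_{i=1}^k(1/n)v(D_i)$ giving $\eta_A(d)\ge k/n$.

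For the ``$\ge$'' direction, fix $d\in\cone_\mathbb{Q}(\mathcal{D}_A)$ with $v-d\in V_A$ and $\epsilon>0$, choose a rational admissible expression $d=\sum t_id_i$ with $\sum t_i\ge\eta_A(d)-\epsilon$, and let $N$ clear the denominators of the $t_i$. By definition of $\pscl_A$ there exist $m\ge1$ and a surface $S_A'$ with $v(S_A')=mN(v-d)$ and $-\chi^-(S_A')/(2mN)\le\pscl_A(v-d)+\epsilon$; by compressing spheres and treating any stray disk components of $S_A'$ as part of the disk family we may assume $\chi^-(S_A')=\chi(S_A')$. Realize each $d_i$ by an honest disk $D_i$ (using Lemma \ref{Qvec} if needed) and form $S_A\defeq S_A'\sqcup\bigsqcup_i (mNt_i\text{ copies of }D_i)$. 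Then $v(S_A)=mNv$ and $\chi(S_A)/(mN)\ge -2\pscl_A(v-d)+\eta_A(d)-O(\epsilon)$, completing the sup-equality and hence the first line of the lemma. For the final inequality, note $\pscl_A(v-d)\ge0$, so $-2\pscl_A(v-d)+\eta_A(d)\le\eta_A(d)$; and whenever $v-d\in V_A$ and $d=\sum t_id_i$ is an admissible expression, $v=(v-d)+\sum t_id_i$ is an admissible expression for $\kappa_A(v)$, giving $\eta_A(d)\le\kappa_A(v)$. The main technical point throughout is keeping track of which components of the constructed surfaces contribute to $\chi^-$ versus to the disk count, but once the disks are rigorously peeled off, everything lines up.
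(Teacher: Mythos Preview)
Your proposal is correct and follows essentially the same approach as the paper's proof: split $S_A$ into disk and non-disk components to get $L\le R$, and for $L\ge R$ build $S_A$ from a near-optimal surface for $\pscl_A(v-d)$ together with many copies of disks realizing a near-optimal expression for $\eta_A(d)$; the final inequality is likewise obtained by dropping the nonpositive term $-2\pscl_A(v-d)$ and observing that any expression witnessing $\eta_A(d)$ yields an admissible expression for $\kappa_A(v)$. One tiny wording issue: after removing disks, $S_A'$ may still contain annuli (with $\chi=0$), so ``no components of non-negative Euler characteristic'' should read ``no components of positive Euler characteristic''; this does not affect the argument since annuli contribute $0$ to both $\chi$ and $\chi^-$ (the paper sidesteps this by using $\chi(S_A')\ge\chi^-(S_A')$ directly).
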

%	We omit the proof since it is similar to that of Lemma \ref{chi0scl0}.
	\begin{proof}
		We first prove the equality in a similar way as we did for Lemma \ref{chi0scl0}. Let $$L=\sup\left\{\left.\chi(S_A)/n\right|v(S_A)=nv\text{ for some }n\in\mathbb{N}\right\},$$ $$R=\sup\left\{\left.-2\pscl_A(v-d)+\eta_A(d)\right|d\in\cone_\mathbb{Q}(\mathcal{D}_A),v-d\in V_A\right\}.$$ By equation (\ref{chi0}), we just need to show $L=R$.
		
		On the one hand, if $v(S_A)=nv$, let $D_1,\dots,D_k$ be the disk components of $S_A$ and $S_A=S_A'\sqcup(\sqcup D_i)$, then $$\chi(S_A)=\chi^-(S_A)+k\le-2n\pscl_A(v-d)+k\le-2n\pscl_A(v-d)+n\eta_A(d)$$ where $d=\sum v(D_i)/n$, this proves $L\le R$.
		
		On the other hand, for any given $\epsilon>0$, we can find $d\in\cone_\mathbb{Q}(\mathcal{D}_A)$ such that $v'=v-d\in V_A$ and $-2\pscl_A(v')+\eta_A(d)>R-\epsilon$. Then we can write $d=\sum t_id_i$ with $t_i$ nonnegative rational and $d_i\in\mathcal{D}_A$ such that $\sum t_i>\eta_A(d)-\epsilon$. We can also find an integer $n\ge1$ and a surface $S_A'$ such that $v(S_A')=nv'$ and $\frac{\chi^-(S_A')}{2n}>-\pscl_A(v')-\epsilon$. Up to replacing $S_A'$ by a bunch of copies of itself, we may assume that each $nt_i$ is an integer. Now take disks $D_i$ such that $v(D_i)=d_i$. Take $nt_i$ copies of $D_i$ for each $i$ and let $S_A$ be the disjoint union of these disks together with $S_A'$. Then $v(S_A)=nv'+n\sum t_id_i=nv$ and $$\frac{\chi(S_A)}{n}=\frac{\chi(S_A')}{n}+\sum t_i\ge\frac{\chi^-(S_A')}{n}+\sum t_i>-2\pscl_A(v')+\eta_A(d)-3\epsilon>R-4\epsilon.$$ This shows $L\ge R$ and finishes the proof of the equality part.
		
		To show the inequality, suppose $d\in\cone_\mathbb{Q}(\mathcal{D}_A)$ and $v-d\in V_A$. For any $\epsilon>0$, we can write $d=\sum t_i d_i$ such that $\eta_A(d)<\sum t_i+\epsilon$, thus the admissible expression $v=(v-d)+\sum t_i d_i$ shows $$\kappa_A(v)\ge\sum t_i>\eta_A(d)-\epsilon\ge -2\pscl_A(v-d)+\eta_A(d)-\epsilon.$$
		Since $\epsilon$ is arbitrary, the desired inequality holds.
	\end{proof}
	Now we can describe how scl is determined in general free products. Let $G=*_{i}G_i$ be the free product of finitely many groups, consider a finite set $Z$ of conjugacy classes (remove torsion elements) in $G$ and build the 1-manifold $L$ as before, define $V_{G_i}$, $\mathcal{D}_{G_i}$, $\eta_{G_i}$ and $\pscl_{G_i}$ as above. Then define $Y$, as in the proof of Theorem \ref{PQL}, to be the rational polyhedron in $\prod V_{G_i}$ consisting of tuples of vectors from $V_{G_i}$ that can be ``glued up'', and define $\chi_o$ (only for rational points in $Y$) to be the sum of $\chi_{o,G_{i}}$ evaluated on the $i$-th coordinate. Now for any rational $l\in H_1(L)$ that corresponds to a rational chain $z\in\left<Z\right>\cap B_1^H(G)$, let $Y_l=d^{-1}(l)\subset Y$,
	\begin{lemma}\label{detscl}
		 With notations as above, $$\scl(z)=\inf_{{\rm rational}\ y\in Y_l}-\chi_o(y)/2.$$
	\end{lemma}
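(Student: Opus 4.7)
The plan is to prove the two inequalities in the formula separately. The linchpin is that under our standing assumption that $Z$ contains no torsion elements, every admissible surface for $nz$ can be arranged to have no disk or sphere components, so $\chi^-(S)=\chi(S)$. Sphere components are irrelevant: they do not meet $\partial S$, so deleting them preserves admissibility, degree, and $\chi^-$. Disk components are ruled out by a torsion argument: by Remark~\ref{sameori} we may take $\partial f$ to be an orientation-preserving covering, so a disk component $D\subset S$ would satisfy $\partial f|_{\partial D}$ covering some $L_i$ with a positive degree $d$, yielding $g_i^d=1$ in $G$; but $g_i\in Z$ has infinite order, a contradiction.

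For the ``$\ge$'' direction, let $f\colon S\to K$ be admissible for $nz$ and set $y_i=v(S_{G_i})/n$. Then $y=(y_i)$ is rational, glues up tautologically, and satisfies $d(y)=l$ by admissibility, so $y\in Y_l$. The identity $\chi(S)=\sum_i\chi_o(S_{G_i})$ from Section~\ref{sec3}, together with $\chi_o(S_{G_i})/n\le\chi_{o,G_i}(y_i)$, gives $\chi(S)/n\le\chi_o(y)$, hence $-\chi^-(S)/(2n)=-\chi(S)/(2n)\ge-\chi_o(y)/2$. Taking the infimum over $S$ yields $\scl(z)\ge\inf_{y\in Y_l}-\chi_o(y)/2$.

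For the ``$\le$'' direction, fix a rational $y=(v_i)\in Y_l$ and $\epsilon>0$. For each factor, use the definition of $\chi_{o,G_i}$ as a supremum to choose $n_i\in\mathbb{N}$ and $S_{G_i}$ with $v(S_{G_i})=n_iv_i$ and $\chi_o(S_{G_i})/n_i\ge\chi_{o,G_i}(v_i)-\epsilon$. Replacing each $S_{G_i}$ by disjoint copies allows us to take a common multiple $n$, so that $v(S_{G_i})=nv_i$ holds for every $i$. The gluing condition on $Y$ is precisely the combinatorial matching data needed to assemble the $S_{G_i}$'s along $F$-arcs into a surface $S$ admissible for $nz$ (since $d(y)=l$). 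The disk/sphere reduction applies to this $S$, so $\chi^-(S)=\chi(S)=\sum_i\chi_o(S_{G_i})\ge n(\chi_o(y)-m\epsilon)$, where $m$ is the number of factors. Hence $-\chi^-(S)/(2n)\le-\chi_o(y)/2+m\epsilon/2$, and letting $\epsilon\to 0$ and taking the infimum over $y$ completes the proof.

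The main obstacle is the disk-component argument: it is the bridge between $\chi_o$, the natural quantity on the vector side, and $\chi^-$, the quantity defining $\scl$. Without it, the ``$\le$'' direction would demand bounding the number of disk components in the glued-up surface, which looks hard in general. The torsion-free assumption on $Z$ closes this gap, and the remaining tasks --- finding a common $n$ and checking that the gluing condition on $Y$ encodes exactly the combinatorial data for assembling an admissible surface --- are routine.
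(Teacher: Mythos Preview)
Your argument is correct and is essentially the proof the paper has in mind: the paper's own proof is literally ``The proof follows exactly as before,'' referring back to the argument for Theorem~\ref{PQL}, and you have spelled out that argument in the general (non-vanishing) setting. One small comment: in the ``$\ge$'' direction you do not actually need the no-disk argument, since $\chi^-(S)\le\chi(S)$ already gives $-\chi^-(S)/(2n)\ge-\chi(S)/(2n)\ge-\chi_o(y)/2$; the torsion observation is only essential for the ``$\le$'' direction, exactly as you note in your final paragraph.
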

	The proof follows exactly as before. By ignoring the contribution from $\scl$ in factor groups, we get the following estimate:
	\begin{cor}\label{sclineq} With notations as above,
		$$2\cdot \scl(z)\ge\inf_{y=(v_{G_i})\in Y_l}\sum_i \left[\frac{1}{2}|v_{G_i}|-\kappa_{G_i}(v_{G_i})\right],$$ and equality holds if $\scl_{G_i}\equiv0$ for all $i$.
	\end{cor}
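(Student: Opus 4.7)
The plan is to combine Lemma \ref{detscl} with the inequality half of Lemma \ref{chi0gen} termwise, then observe that the equality half of Lemma \ref{chi0scl0} upgrades the inequality to equality when each factor has vanishing scl.

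First, by Lemma \ref{detscl} we have
\[
2\cdot\scl(z)=\inf_{\text{rational }y\in Y_l}-\chi_o(y)=\inf_{y=(v_{G_i})\in Y_l}\sum_i -\chi_{o,G_i}(v_{G_i}),
\]
where the last equality uses the defining sum decomposition of $\chi_o$ on $Y$. Next, Lemma \ref{chi0gen} (applied with $A=G_i$) gives, for each rational $v_{G_i}\in V_{G_i}$, the pointwise bound
\[
-\chi_{o,G_i}(v_{G_i})\ge \tfrac{1}{2}|v_{G_i}|-\kappa_{G_i}(v_{G_i}).
\]
Summing over $i$ and taking infimum over $y\in Y_l$ yields the inequality in the statement. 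Note that both sides only depend on the rational points of $Y_l$ by continuity of $|\cdot|$ and $\kappa_{G_i}$ (the latter is the minimum of finitely many rational linear functions by Corollary \ref{keyob}), so restricting to rational $y$ is harmless.

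For the equality half under $\scl_{G_i}\equiv0$, Lemma \ref{chi0scl0} gives $\chi_{o,G_i}(v_{G_i})=-\tfrac{1}{2}|v_{G_i}|+\kappa_{G_i}(v_{G_i})$ on the nose for each rational $v_{G_i}\in V_{G_i}$. Plugging this equality (rather than the inequality) into the computation of $2\scl(z)$ above turns the sum estimate into an identity, proving the second assertion.

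The only real content is already packaged in the lemmas invoked; the one point to be careful about is matching the two optimization problems, namely that the infimum in Lemma \ref{detscl} is taken only over rational $y\in Y_l$, whereas the statement of the corollary writes the infimum over all $y\in Y_l$. This is resolved by the fact that the objective $\sum_i[\tfrac{1}{2}|v_{G_i}|-\kappa_{G_i}(v_{G_i})]$ is the minimum of finitely many rational linear functions on the rational polyhedron $Y_l$, so its infimum over $Y_l$ agrees with its infimum over the dense set of rational points, and the optimum is actually attained.
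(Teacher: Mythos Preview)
Your proof is correct and follows essentially the same approach as the paper: the inequality combines Lemma \ref{detscl} with the inequality in Lemma \ref{chi0gen}, using Corollary \ref{keyob} to pass from rational points to all of $Y_l$ by continuity of $\kappa_{G_i}$; for the equality case you invoke Lemma \ref{chi0scl0} directly, whereas the paper points to the proof of Theorem \ref{rationality}, but that proof in turn rests on Corollary \ref{keyob}, which is just Lemma \ref{chi0scl0} repackaged.
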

	\begin{proof}
		The inequality follows from Lemma \ref{detscl} and the inequality in Lemma \ref{chi0gen} together with the fact that $\kappa_{G_i}$ has a continuous extension to irrational points (Corollary \ref{keyob}). The equality part is proved in the proof of Theorem \ref{rationality}.
	\end{proof}
	Using Lemma \ref{detscl}, we can generalize Corollary \ref{isoembscl0} to our second main result.
	\setcounter{theoremx}{1}
	\begin{theoremx}[Isometric Embedding]\label{isoemb}
		Let $f_\lambda:H_\lambda\to G_\lambda$ be a family of isometric embeddings with respect to scl, then the induced map $f:*H_\lambda\to*G_\lambda$ is also an isometric embedding.
	\end{theoremx}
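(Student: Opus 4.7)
The plan is to apply Lemma \ref{detscl} to both $\scl_{*H_\lambda}(c)$ and $\scl_{*G_\lambda}(f(c))$ and show that the two infima coincide. By Remark \ref{finiteness} one reduces to finitely many factors; injectivity of $f$ follows from the normal form theorem in free products. Monotonicity of scl already gives $\scl_{*G_\lambda}(f(c)) \leq \scl_{*H_\lambda}(c)$, so only the reverse inequality requires work.

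First I would use Remark \ref{redun} to work throughout with $V'_\lambda$ in place of $V_\lambda$. Each injective $f_\lambda$ identifies the representing loops for elements of $Z$ and hence the bases $T_2(H_\lambda) \leftrightarrow T_2(G_\lambda)$, inducing $V'_{H_\lambda} \cong V'_{G_\lambda}$. The gluing constraints defining $Y$ and the degree map $d : Y \to H_1(L)$ depend only on this combinatorial data, so $Y_l^H$ and $Y_l^G$ are identified rationally as polyhedra. It therefore suffices to verify $\chi_{o,H_\lambda}(v) = \chi_{o,G_\lambda}(f_\lambda(v))$ for every rational $v \in V'_{H_\lambda}$.

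By Lemma \ref{chi0gen}, this decomposes into three pieces. The linear functional $|\cdot|$ is preserved by construction. By injectivity of $f_\lambda$ a cyclic word in $T(H_\lambda)$ is trivial in $H_\lambda$ iff its image is trivial in $G_\lambda$, so the sets of disk vectors $\mathcal{D}_{H_\lambda}$ and $\mathcal{D}_{G_\lambda}$ correspond under $f_\lambda$, giving $\eta_{H_\lambda}(d) = \eta_{G_\lambda}(f_\lambda(d))$. The substantive step is to establish $\pscl_{H_\lambda}(u) = \pscl_{G_\lambda}(f_\lambda(u))$ for each rational $u \in V'_{H_\lambda}$. The key observation is the identification $\pscl_A(u) = \scl_A(c_u)$, where $c_u \in B_1^H(A)$ is the chain read off from $u$: taking the representation of $c_u$ in which the 1-manifold $L_u$ has one component per cyclic word appearing in $u$, a surface $S_A$ has encoding $nu$ precisely when $\partial S_A$ is a degree-$n$ cover of $L_u$, i.e., precisely when $S_A$ is admissible for $c_u$ of degree $n$ in this representation. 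Because scl descends to a well-defined pseudo-norm on $B_1^H$ (independent of the chosen representation), the restricted infimum $\pscl_A(u)$ coincides with $\scl_A(c_u)$. Combining this with the isometric embedding hypothesis then gives
\[
\pscl_{G_\lambda}(f_\lambda(u)) = \scl_{G_\lambda}(c_{f_\lambda(u)}) = \scl_{G_\lambda}(f_\lambda(c_u)) = \scl_{H_\lambda}(c_u) = \pscl_{H_\lambda}(u).
\]

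The main obstacle I anticipate is justifying the identification $\pscl_A(u) = \scl_A(c_u)$ carefully: one must check that the encoding $nu$ really pins down the boundary as a degree-$n$ cover of $L_u$ (so that fixing the encoding imposes no restriction beyond a choice of representation of the chain), and then invoke representation-independence of scl on $B_1^H$ to drop that choice. Once $\pscl_A = \scl_A \circ c$ is established, all three ingredients of $\chi_o$ match under $f_\lambda$, so the linear programming problems arising from Lemma \ref{detscl} on the two sides are identical, and the equality $\scl_{*H_\lambda}(c) = \scl_{*G_\lambda}(f(c))$ follows.
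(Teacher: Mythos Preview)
Your overall strategy is exactly the paper's: reduce to finitely many factors, apply Lemma~\ref{detscl} on both sides using the $V'$-variant from Remark~\ref{redun}, and match the three ingredients $|\cdot|$, $\eta$, $\pscl$ of Lemma~\ref{chi0gen} under $f_\lambda$. The handling of $|\cdot|$ and $\eta$ is fine.

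The gap is in your treatment of $\pscl$. The claim that ``a surface $S_A$ has encoding $nu$ precisely when $\partial S_A$ is a degree-$n$ cover of $L_u$'' is false, and with it the identification $\pscl_A(u)=\scl_A(c_u)$ for a single well-defined chain $c_u$. The encoding $v(S_A)$ records only the multiset of $F$-arcs $(\tau,\tau')$, not how they are strung together into boundary loops; different Eulerian decompositions of the same edge-multiset give genuinely different chains in $B_1^H(A)$. For instance, with $T(A)=\{x,y\}$ and $u=(x,x)+(x,y)+(y,x)+(y,y)$, the unique boundary word for $u$ is $x^2y^2$, but a surface with encoding $2u$ can have boundary the single cyclic word $x^3y^2xy^2$, which is neither a cover of the loop $x^2y^2$ nor equal to $2(x^2y^2)$ in $B_1^H(F_2)$. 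So there is no canonical $c_u$, and representation-independence of scl does not rescue the argument.

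The fix is short and is what the paper actually uses. For each $n$, let $\mathcal{B}(nu)$ be the (finite, purely combinatorial) set of chains arising as $\partial S_A$ for surfaces with $v(S_A)=nu$. One checks directly that
\[
\pscl_A(u)=\inf_{n\ge 1}\ \frac{1}{n}\inf_{c\in\mathcal{B}(nu)}\scl_A(c),
\]
the ``$\ge$'' because any such $S_A$ is degree-$1$ admissible for its own boundary chain, and the ``$\le$'' because an efficient surface for $c\in\mathcal{B}(nu)$ of degree $N$ has encoding $Nnu$. Since the sets $\mathcal{B}(nu)$ are determined combinatorially by $u$ and are carried bijectively by $f_\lambda$, and since $f_\lambda$ preserves scl on \emph{every} chain, the two infima agree and $\pscl_{H_\lambda}(u)=\pscl_{G_\lambda}(f_\lambda(u))$. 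With this correction your argument goes through and coincides with the paper's.
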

	\begin{proof}
		The proof is almost the same as that of Corollary \ref{isoembscl0}. First reduce to finite free products, then apply Lemma \ref{detscl} on both sides accordingly with $V_{H_i}'$ and $V_{G_i}'$ instead as in Remark \ref{redun} to avoid assuming that $f_i$ induces an injective map on homology. Injectivity of $f_i$ ensures that $\mathcal{D}_{H_i}$ can be identified with $\mathcal{D}_{G_i}$, and thus $\eta_{H_{i}}$ is identical to $\eta_{G_{i}}$. Since $f_i$ preserves $\scl$, we see $\pscl_{H_i}$ is identical to $\pscl_{G_i}$. Since $V_{H_i}'$ is identified with $V_{G_i}'$ by $f_i$, $\chi_{o,H_i}$ is the same as $\chi_{o,G_i}$ and thus the computation of scl (for rational chains) on either side is obtained by taking the infimum of the same functions on identical spaces, hence $f$ preserves scl.
	\end{proof}
	\begin{remark}
		Alternately, one can prove this theorem in a more direct way. Here is an outline: 
		
		Reduce to finite free products and then to the case $A*B\to A'*B'$ by induction. It suffices to show $\scl(c)\le\scl'(c')$ for integral homologically trivial chains $c$ where $c'=f(c)$, $\scl=\scl_{A*B}$ and $\scl'=\scl_{A'*B'}$. Take any surface $S'$ mapped into $A'*B'$ that approximates $\scl'(c')$ well, decompose it as in Section \ref{sec3} into pieces $S_{A'}$ and $S_{B'}$. Disk components of $S_{A'}$ ``factor through'' $A$ by injectivity of $A\to A'$, the boundary of the union of other components defines a chain on $A'$ that can be pulled back to a homologically trivial (for the same reason explained in Remark \ref{redun}) chain on $A$ with identical $\scl$ by assumption. Then find a surface that approximates the scl of this chain well, take a finite cover if necessary and take the union with multiple copies of the disk components from $S_A'$, and we obtain a surface $S_A$ with $-\chi_o(S_A)\le m(-\chi_o(S_A')+\epsilon)$ and such that $v(S_A)$ corresponds to $mv(S_A')$, using the notation in Section \ref{sec3}. Do the same thing for $B$ then glue up $S_A$ and $S_B$ (after taking suitable finite covers) to get a surface $S$ mapped to $A*B$ that winds around $c$ and is almost as efficient as $S'$, thus $\scl(c)\le\scl'(c')$.
	\end{remark}
	
	\section{Applications: Generalizations and New Proofs of Old Results}\label{app}
	In this section, we apply the isometric embedding theorem and the computational methods we have developed to get generalizations and new proofs of old results. %The proofs of Proposition \ref{prod} and Proposition \ref{selfprod} are very detailed so they serve as concrete examples to help one understand the discussions above, especially Lemma \ref{detscl}.
	
	We start with a simple corollary of Theorem \ref{isoemb}.
	\begin{cor}\label{steal}
		Let $g_\lambda\in G_\lambda$, $G=*_\lambda G_\lambda$, and $f_\lambda:\left<g_\lambda\right>\to G_\lambda$ be the inclusion. Then the induced map $f:*_\lambda\left<g_\lambda\right>\to G$ is an isometric embedding. In particular, if $g_\lambda\in G_\lambda$ has order $k_\lambda$, then the spectrum of $\scl_G$ contains the spectrum of $\scl$ on $*_\lambda(\mathbb{Z}/k_\lambda\mathbb{Z})$. Here $k_\lambda\ge2$ could be $\infty$, in which case $\mathbb{Z}/k_\lambda\mathbb{Z}$ is $\mathbb{Z}$; and the ``spectrum'' could refer to the values scl takes on $B_1^H$ or the commutator subgroups. 
	\end{cor}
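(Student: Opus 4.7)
The plan is to obtain this as a direct consequence of Theorem \ref{isoemb}, applied to the family of inclusions $f_\lambda : \langle g_\lambda\rangle \hookrightarrow G_\lambda$. Once each $f_\lambda$ has been verified to be an isometric embedding in the technical sense (injective and scl-preserving on $B_1^H$), the theorem immediately produces the isometric embedding $f : *_\lambda\langle g_\lambda\rangle \to G$.

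The first step is to check that each $f_\lambda$ is itself an isometric embedding. Injectivity is automatic since $f_\lambda$ is a subgroup inclusion. For the scl-preserving property, observe that $\langle g_\lambda\rangle$ is cyclic, hence abelian and in particular amenable, so by Remark \ref{vanscl} the pseudo-norm $\scl_{\langle g_\lambda\rangle}$ vanishes identically on $B_1^H(\langle g_\lambda\rangle)$. Monotonicity of scl under homomorphisms then gives, for every $c\in B_1^H(\langle g_\lambda\rangle)$,
$$0 \;=\; \scl_{\langle g_\lambda\rangle}(c) \;\geq\; \scl_{G_\lambda}(f_\lambda(c)) \;\geq\; 0,$$
so equality holds and $f_\lambda$ is isometric. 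Feeding this family into Theorem \ref{isoemb} yields the first claim.

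For the spectrum assertion, if $g_\lambda$ has order $k_\lambda$ (with the convention $k_\lambda=\infty$ meaning $\mathbb{Z}$), then $\langle g_\lambda\rangle\cong\mathbb{Z}/k_\lambda\mathbb{Z}$, so the source of $f$ is naturally identified with $*_\lambda(\mathbb{Z}/k_\lambda\mathbb{Z})$. Since $f$ preserves scl on $B_1^H$, the $B_1^H$-spectrum of $*_\lambda(\mathbb{Z}/k_\lambda\mathbb{Z})$ is contained in that of $\scl_G$. For the commutator-subgroup spectrum, note that $f$ carries the commutator subgroup of the source into $[G,G]$, and preservation of scl on chains specializes to preservation of scl on single conjugacy classes, so this version of the spectrum is also contained in that of $\scl_G$.

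There is no real obstacle in this proof: the entire argument is an unpacking of definitions together with an invocation of Theorem \ref{isoemb}, with all the genuine work already done in establishing that theorem.
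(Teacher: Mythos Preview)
Your proof is correct and follows essentially the same route as the paper: verify that each inclusion $f_\lambda$ is an isometric embedding (trivially, because the cyclic side is degenerate), then invoke Theorem~\ref{isoemb}. The only cosmetic difference is that the paper observes directly that $B_1^H(\langle g_\lambda\rangle)=0$ for a cyclic (hence abelian) group, whereas you argue that $\scl$ vanishes on the source by amenability and then use monotonicity to force equality; both observations make the isometry condition vacuous, so the arguments are interchangeable.
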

	\begin{proof}
		Simply note that $f_\lambda$ is an isometric embedding even if $\scl_{G_\lambda}(g_\lambda)>0$: because the definition of an isometric embedding is that $f_i$ induces an isometric map $B_1^H(\left<g_\lambda\right>)\to B_1^H(G_\lambda)$, and $B_1^H(\left<g_\lambda\right>)=B_1^H(\mathbb{Z}/k_\lambda\mathbb{Z})=0$ since $\mathbb{Z}/k_\lambda\mathbb{Z}$ is abelian.
	\end{proof}
	\begin{remark}
		Scl in free products of cyclic groups has been studied in \cite{DFPQL}, \cite{DSSS} and \cite{CWIEND}. The program {\rm \ttfamily scallop} \cite{CWscallop} can compute scl on specific chains.
	\end{remark}
	This allows us to generalize results about the scl spectrum in free groups (or free products of cyclic groups) to general free products. For example, we have
	\begin{cor}
		Let $G=*_\lambda G_\lambda$ ($\lambda\in\Lambda$, $|\Lambda|\ge2$) and suppose at least two $G_\lambda$'s contain elements of infinite order. Then the image of $[G,G]$ under scl contains elements congruent to every element of $\mathbb{Q}$ mod $\mathbb{Z}$. Moreover, it contains a well-ordered sequence of values with ordinal type $\omega^\omega$.
	\end{cor}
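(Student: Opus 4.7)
The plan is to reduce everything to the known case of the free group $F_2$ using Corollary \ref{steal}. By hypothesis, there exist two distinct indices $\lambda_1,\lambda_2\in\Lambda$ and elements $g_1\in G_{\lambda_1}$, $g_2\in G_{\lambda_2}$ each of infinite order. Then $\langle g_1\rangle\cong\mathbb{Z}\cong\langle g_2\rangle$, so the free product $\langle g_1\rangle*\langle g_2\rangle$ sits inside $G$ and is isomorphic to $F_2$. By Corollary \ref{steal}, the inclusion $F_2\hookrightarrow G$ is an isometric embedding for scl, so the spectrum of $\scl_{F_2}$ on $[F_2,F_2]$ is contained in the spectrum of $\scl_G$ on $[G,G]$.

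It then suffices to cite two known facts about scl in $F_2$. First, the scl spectrum of $F_2$ meets every residue class of $\mathbb{Q}/\mathbb{Z}$: for each $p/q\in\mathbb{Q}$ one exhibits an explicit family of words (for instance of the form $a^pb^q$ type, or those appearing in the surgery families of Calegari--Walker) whose scl values realize the given residue modulo $\mathbb{Z}$. Second, the scl spectrum of $F_2$ contains a well-ordered subset of ordinal type $\omega^\omega$; this is one of the main outputs of the sail/ziggurat analysis of Calegari--Walker. Transporting both statements through the isometric embedding established above yields the conclusion.

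The only genuinely delicate step is the reduction itself, namely verifying that Corollary \ref{steal} applies here, which in turn relies on Theorem \ref{isoemb} to handle the possibility that $\scl_{G_{\lambda_i}}(g_i)>0$. As noted in the proof of Corollary \ref{steal}, $B_1^H(\langle g_i\rangle)=0$ since $\langle g_i\rangle\cong\mathbb{Z}$ is abelian, so the inclusion $\langle g_i\rangle\hookrightarrow G_{\lambda_i}$ is vacuously isometric on $B_1^H$, and hence Theorem \ref{isoemb} produces an isometric embedding $F_2\cong\langle g_1\rangle*\langle g_2\rangle\hookrightarrow G$. After that, all the substantive work has already been done in the cited spectral results for $F_2$, so no further geometric input is required.
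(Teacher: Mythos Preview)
Your proposal is correct and follows essentially the same approach as the paper: reduce to $F_2$ via Corollary \ref{steal} and invoke the known spectral results for nonabelian free groups. The paper's proof is simply a terser version of yours, citing Corollary 3.19 of \cite{CWIEND} directly for both assertions about the $F_2$ spectrum rather than describing the mechanisms behind them.
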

	\begin{proof}
		This follows from Corollary 3.19 in \cite{CWIEND}, which states that the conclusion is true for nonabelian free groups, and our Corollary \ref{steal}.
	\end{proof}
%	We can also get explicit formulae for scl on certain families of chains. We only list two examples.
%	\begin{cor}
%		Let $G=A*B$, $a\in A$ and $b\in B$ are torsion free, then\\
%		{\rm(1)} $\scl_G(aba^{-1}b^{-1}ab^{-n}a^{-1}b^n)=1-\frac{1}{2n-2}$\\
%		{\rm(2)} Let $w=a^{-\alpha_1-\alpha_2}+b^{-\beta_1-\beta_2}+a^{\alpha_1}b^{\beta_1}+a^{\alpha_2}b^{\beta_2}$ where the $\alpha_i$ are coprime and similarly for the $\beta_i$, and $\alpha_1>\alpha_1+\alpha_2>0>\alpha_2$, $\beta_1>\beta_1+\beta_2>0>\beta_2$, then $$\scl_{G}(w)=\left\{\begin{array}{ll}
%		1-\frac{1}{2}(\frac{1}{\alpha_1}+\frac{\alpha_1-1}{\alpha_1(\beta_1+\beta_2)})& {\rm if}\  \frac{\alpha_1-1}{\alpha_1}\le\frac{\beta_1+\beta_2}{\beta_1}\\
%		1-\frac{1}{2}(\frac{1}{\beta_1}+\frac{\beta_1-1}{\beta_1(\alpha_1+\alpha_2)})& {\rm if}\  \frac{\beta_1-1}{\beta_1}\le\frac{\alpha_1+\alpha_2}{\alpha_1}\\
%		1-\frac{1}{2}(\frac{1}{\alpha_1}+\frac{1}{\beta_1})& {\rm otherwise}
%		\end{array}\right.$$
%	\end{cor}
%	\begin{proof}
%		Apply Corollary \ref{steal}. The results follow from Example 1.1 and Proposition 4.3 of \cite{DSSS} respectively.
%	\end{proof}
%	\begin{remark}
%		If $a$ or $b$ has torsion, we can get similar results by looking at scl on free products of cyclic groups, which is studied in \cite{AWscylla} and can be computed via the -cyclic mode of {\rm \ttfamily scallop}.
%	\end{remark}
	
	Now we give three examples to illustrate how Lemma \ref{detscl} works. We first deduce the following product formula, which was originally stated not quite correctly in \cite{Bavard} (but the proof is still valid for elements of infinite order) and later corrected and proved in \cite{DSCL} for the general case.
	\begin{prop}[Product Formula]\label{prod}
		Let $G=A*B$, and let $a\in A$, $b\in B$ be nontrivial elements. Suppose $a$ and $b$ are of order $n_a$ and $n_b$ (could be $\infty$, in which case $1/\infty=0$ by convention), then $$\scl_G(ab)=\scl_A(a)+\scl_B(b)+\frac{1}{2}(1-\frac{1}{n_a}-\frac{1}{n_b}).$$
	\end{prop}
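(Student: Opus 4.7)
The plan is to apply Lemma \ref{detscl} and Lemma \ref{chi0gen} directly to the chain $z = ab$. The setup is essentially one-dimensional: with $Z = \{ab\}$ the 1-manifold $L$ is a single circle crossing the wedge point twice, decomposed into one $a$-arc in $K_A$ and one $b$-arc in $K_B$, and neither is a self-loop. Hence $T_2(A) = \{(a,a)\}$ and $T_2(B) = \{(b,b)\}$, so $C_2(A)$ and $C_2(B)$ are one-dimensional. Since $\scl_G(ab)$ being defined forces $a$ to vanish in $H_1(A;\mathbb{R})$ (either $a$ has finite order, or $a \in [A,A]$, and similarly for $b$), the condition $h = 0$ is automatic; so $V_A = \mathbb{R}_{\geq 0}(a,a)$ and $V_B = \mathbb{R}_{\geq 0}(b,b)$. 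The gluing condition forces equal coefficients on the two factors, and the degree map $d$ identifies the resulting diagonal ray linearly with $H_1(L) \cong \mathbb{R}$. Consequently $Y_l$ reduces to the single point $y_0 = ((a,a),(b,b))$, and Lemma \ref{detscl} gives $\scl_G(ab) = -\tfrac{1}{2}\bigl(\chi_{o,A}((a,a)) + \chi_{o,B}((b,b))\bigr)$.

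To compute $\chi_{o,A}((a,a))$ I would apply Lemma \ref{chi0gen}. First, $|(a,a)| = 1$ since this pair encodes an $F$-arc rather than a self-loop boundary component. A disk in $S_A$ encoded as $k(a,a)$ has boundary reading $a^k$ in $A$, so $\mathcal{D}_A = \{k n_a (a,a) : k \in \mathbb{Z}_+\}$ when $n_a < \infty$ and $\mathcal{D}_A = \emptyset$ when $n_a = \infty$; consequently $\eta_A(t(a,a)) = t/n_a$, with the convention that $1/\infty = 0$. The central identification is $\pscl_A(s(a,a)) = s \cdot \scl_A(a)$: collapsing the $F$-arcs of a surface $S_A$ with $v(S_A) = n(a,a)$ (all of which sit over the wedge point) converts it into an admissible surface for $a$ in $A$ of degree $n$, and conversely any admissible surface for $a$ lifts to such an encoding, whence homogeneity of $\pscl_A$ gives the general statement.

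Putting these into Lemma \ref{chi0gen},
\begin{equation*}
\chi_{o,A}((a,a)) = -\tfrac{1}{2} + \sup_{t}\bigl\{-2(1-t)\scl_A(a) + t/n_a\bigr\},
\end{equation*}
the supremum over admissible $t \in [0,1] \cap \mathbb{Q}$. If $n_a = \infty$ only $t = 0$ is allowed and the value is $-\tfrac{1}{2} - 2\scl_A(a)$. If $n_a < \infty$, torsion forces $\scl_A(a) = 0$ (the chain $a$ vanishes in $B_1^H(A)$ because $n_a a = a^{n_a} = 0$ there), and the supremum $1/n_a$ is attained at $t = 1$, giving $-\tfrac{1}{2} + 1/n_a$. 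Both cases are captured uniformly by $\chi_{o,A}((a,a)) = -\tfrac{1}{2} + 1/n_a - 2\scl_A(a)$; the analogous identity holds for $B$, and the product formula follows by summation.

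The main obstacle is the bidirectional identification $\pscl_A((a,a)) = \scl_A(a)$, which requires checking that surfaces encoded combinatorially as in Section \ref{sec3} are in natural correspondence with admissible surfaces for $a$ in the single factor $A$ once the $F$-arcs are collapsed. Everything else—the dimension count for $V_A$, the description of $\mathcal{D}_A$ and $\eta_A$, and the case-by-case optimization—follows directly from the definitions.
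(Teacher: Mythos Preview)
Your proof is correct and follows essentially the same approach as the paper's own proof: both apply Lemma \ref{detscl} and Lemma \ref{chi0gen} to $Z=\{ab\}$, identify $V_A=\mathbb{R}_{\ge 0}(a,a)$, describe $\mathcal{D}_A$ and $\eta_A$ in the same way, use $\pscl_A(t(a,a))=t\cdot\scl_A(a)$, and reduce $Y_l$ to the single point $((a,a),(b,b))$. Your write-up is in fact slightly more careful than the paper's---you spell out the collapsing-of-$F$-arcs justification for $\pscl_A((a,a))=\scl_A(a)$ and the optimization over $t$ explicitly---but the argument is the same.
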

	\begin{proof}
		If the image of $a$ in $H_1(A;\mathbb{R})$ is not zero, then both sides are $\infty$ by convention, and similarly for $b$. Now we assume this is not the case. Using the notations in previous sections, let $Z=\{ab\}$, then $L$ is just an oriented circle, $V_A=\{t(a,a)|t\ge0\}$, and $\pscl_A(t(a,a))=\scl_A(a)$ by linearity of scl. If $n_a$ is finite, then $\scl_A(a)=0$, $\mathcal{D}_A=\{kn_a(a,a)|k\in\mathbb{Z}_+\}$ and thus $\eta_A(t(a,a))=t/n_a$, $\chi_{o,A}(t(a,a))=-t/2+t/n_a$ by Lemma \ref{chi0gen}. If $n_a=\infty$, then $\mathcal{D}_A=\emptyset$ and $\eta_A(t(a,a))=0$, thus $\chi_{o,A}(t(a,a))=-t/2-2t\scl_A(a)$ by Lemma \ref{chi0gen}. Then $\chi_{o,A}(t(a,a))=-t/2-2t\scl_A(a)+t/n_a$ is valid in both case. Similarly we get $\chi_{o,B}$. Now the ``glue-up'' condition on $V_A\times V_B$ simply requires $s=t$ for $(t(a,a),s(b,b))$, thus $Y=\{(t(a,a),t(b,b))|t\ge0\}$. Then the fundamental class $l\in H_1(L)$ corresponds to the chain $ab$, thus $Y_l$ is a singleton $\{((a,a),(b,b))\}$ and $\chi_o(((a,a),(b,b)))=-1-2\scl_A(a)-2\scl_B(b)+1/n_a+1/n_b$. Therefore by Lemma \ref{detscl}, we get $$\scl_G(ab)=-\chi_o(((a,a),(b,b)))/2=\scl_A(a)+\scl_B(b)+\frac{1}{2}(1-\frac{1}{n_a}-\frac{1}{n_b}).$$
	\end{proof}
	
	The following self-product formula is an analog of the product formula. When $B=\mathbb{Z}$ and $t$ is the generator, it is proved in \cite{DSCL}.
	\begin{prop}[Generalized Self-product Formula]\label{selfprod}
		Let $A$ and $B$ be groups, $x,y\in A$ and $t\in B$ be elements of infinite order. Then $$\scl_{A*B}(xtyt^{-1})=\scl_{A}(x+y)+\frac{1}{2}.$$
	\end{prop}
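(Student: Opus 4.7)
The plan is to apply Lemma \ref{detscl} to $z = xtyt^{-1}$ in the same spirit as the proof of Proposition \ref{prod}. Taking $Z = \{xtyt^{-1}\}$, the loop $L$ is a single oriented circle cut by preimages of the wedge point into four arcs $x,t,y,t^{-1}$, so $T(A) = \{x,y\}$ and $T(B) = \{t,t^{-1}\}$ with no self-loops, and each of $T_2(A)$ and $T_2(B)$ has four elements. Writing out $\partial v = 0$ and $h(v) = 0$ on $V_A$ and $V_B$, together with the gluing pairings $(x,x) \leftrightarrow (t^{-1},t)$, $(x,y) \leftrightarrow (t,t)$, $(y,x) \leftrightarrow (t^{-1},t^{-1})$, $(y,y) \leftrightarrow (t,t^{-1})$ and the degree-one condition on each arc of $L$, one finds $Y_l$ is the one-parameter family
\[
v = p[(x,x)+(y,y)] + q[(x,y)+(y,x)], \quad v' = q[(t,t)+(t^{-1},t^{-1})] + p[(t,t^{-1})+(t^{-1},t)],
\]
with $p,q \geq 0$ and $p+q=1$, and $|v| = |v'| = 2$.

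I would then compute $\chi_{o,B}(v')$ using the infinite order of $t$. Any disk $d \in \mathcal{D}_B$ bounds a cyclically trivial word in $\{t,t^{-1}\}$, and its number of sign flips $R(d) \defeq d_{(t,t^{-1})} = d_{(t^{-1},t)}$ is at least $1$. For any decomposition $d_B = \sum c_i d_i$ with $d_B \leq v'$ this forces $\sum c_i R(d_i) = (d_B)_{(t,t^{-1})} \leq p$, hence $\eta_B(d_B) \leq p$; equality is achieved by combining the basic disk $(t,t^{-1}) + (t^{-1},t)$ with longer single-run disks $t^k t^{-k}$ to absorb the $(t,t)$ and $(t^{-1},t^{-1})$ coordinates. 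For such a $d_B$ the residual $v' - d_B$ is a multiple of the cylinder vector $(t,t) + (t^{-1},t^{-1})$, realized in $K_B$ by annuli between $t$-loops and $t^{-1}$-loops, so $\pscl_B(v'-d_B) = 0$ and Lemma \ref{chi0gen} yields $\chi_{o,B}(v') = -1 + p$. An entirely analogous argument on the $A$-side uses that the infinite order of $x$ and $y$ rules out pure $x^k$- or $y^k$-disks, so every disk $d_i$ in a decomposition of $d_A \leq v(p)$ satisfies $R_A(d_i) \defeq (d_i)_{(x,y)} \geq 1$, giving $\eta_A(d_A) \leq (d_A)_{(x,y)} = q - q'$ where $q' \defeq (v-d_A)_{(x,y)}$. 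Combined with a conversion lemma asserting $\pscl_A(v-d_A) \geq \scl_A(x+y) - q'/2$, proved by surgering each off-diagonal pair of $F$-arcs into a diagonal pair at the cost of one $1$-handle, this yields $\sup_{d_A}[-2\pscl_A(v-d_A)+\eta_A(d_A)] \leq q - 2\scl_A(x+y)$ and hence $\chi_{o,A}(v(p)) \leq -p - 2\scl_A(x+y)$.

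Adding the two estimates, Lemma \ref{detscl} gives $-\chi_o(y)/2 \geq \scl_A(x+y) + 1/2$ for every $y \in Y_l$, with equality at $(p,q) = (1,0)$: there $v(1) = (x,x) + (y,y)$ is realized by an optimal surface for $\scl_A(x+y)$ (no disk reduction being available since $x,y$ have infinite order), and $v'(1) = (t,t^{-1}) + (t^{-1},t)$ is realized by the genuine disk in $K_B$ bounded by $tt^{-1}$. The main obstacle is the conversion lemma: one must verify that surgering an off-diagonal $F$-arc pair into a diagonal pair inside $S_A$ attaches exactly one $1$-handle, tracks $\chi^-$ correctly rather than just $\chi$, and can be carried out compatibly with any disk components that realize $\eta_A$.
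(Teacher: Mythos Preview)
Your setup of $Y_l$, the $B$-side computation giving $\chi_{o,B}(v') = -1+p$, and the plan to use a rectangle surgery on the $A$-side are all exactly what the paper does. The only substantive difference is organizational: on the $A$-side you route the estimate through Lemma~\ref{chi0gen} (splitting into $\pscl_A(v-d_A)$ and $\eta_A(d_A)$), whereas the paper uses equation~(\ref{chi0}) directly, bounding $\sup\{\chi(S_A)/n : v(S_A) = nv\}$ by applying the rectangle surgery to the \emph{full} surface $S_A$. This matters because $v$ itself is symmetric in $(x,x)$ and $(y,y)$: after $nq$ surgeries the resulting $S_A'$ has equal $x$- and $y$-degree (namely $n$ each), is admissible for $x+y$ of degree exactly $n$, and has no disk components since $x,y$ have infinite order. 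Hence $-\chi(S_A)/n + q = -\chi^-(S_A')/n \ge 2\scl_A(x+y)$ with no further bookkeeping, and the equality at $p=1$ is immediate.

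Your conversion lemma $\pscl_A(v-d_A) \ge \scl_A(x+y) - q'/2$ is more delicate than it looks, and the obstacles you flag are real. Once you subtract a nonzero $d_A \in \cone(\mathcal{D}_A)$, the residual $v-d_A$ need not have equal $(x,x)$- and $(y,y)$-coordinates: this occurs precisely when $[x]=[y]=0$ in $H_1(A;\mathbb{R})$ and some disk relation uses unequal numbers of $x$'s and $y$'s. The surgered surface is then admissible for $(p_1+q')x+(p_2+q')y$ with $p_1 \ne p_2$, and comparing $\scl_A$ of this chain to $\scl_A(x+y)$ is not automatic. You also, as you note, have to track disk components of a surface realizing $\pscl_A(v-d_A)$, since $\pscl$ uses $\chi^-$; these shift the effective vector being surgered. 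The paper's route avoids all of this: working with $\chi$ (not $\chi^-$) on the full $S_A$ absorbs any disk components into the Euler-characteristic count, and the built-in symmetry of $v$ delivers a clean degree-$n$ surface for $x+y$. Your ``main obstacle'' dissolves if you simply do not split off $d_A$.
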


	\begin{proof}
		Again both sides are $\infty$ if $x+y$ is not $0$ in $H_1(A)$. Thus we assume this is not the case. The result easily follows from the original self-product formula and Theorem \ref{isoemb} by considering ${\rm id}:A\to A$ and the inclusion $i:\left<t\right>\to B$. But we prove it using the computational tool above, which gives a new proof.
		
		We apply Lemma \ref{detscl} to calculate the left hand side. Using notations as before, let $Z=\{xtyt^{-1}\}$, then $L$ is an oriented circle, $C_2(B)$ consists of vectors of the form $v_b=b_{11}(t,t)+b_{12}(t,t^{-1})+b_{21}(t^{-1},t)+b_{22}(t^{-1},t^{-1})$ where we encode the coefficients into a $2\times2$ matrix $b=(b_{ij})$. Then by the definition of $V_B'$ (Remark \ref{redun}), $\partial(v_b)=0$ requires the sum of entries in the $i$-th row equals that of those in the $i$-th column for all $i$, which is $b_{12}=b_{21}$ in this case. Similarly $V_A'=\{u_a|a_{12}=a_{21}, a_{ij}\ge0\}$, where $u_a=a_{11}(x,x)+a_{12}(x,y)+a_{21}(y,x)+a_{22}(y,y)$. If $(u_a,v_b)\in Y'$, the glue-up condition requires $a_{11}=b_{12}$, $a_{12}=b_{11}$, $a_{21}=b_{22}$ and $a_{22}=b_{21}$. In other words, $b$ is the matrix we get by interchanging the columns of $a$. Finally $(u_a,v_b)\in Y_l$ requires in addition that each row of $b$ (and $a$) sums up to $1$. Together with $\partial(v_b)=0$, this implies that $b_{12}=b_{21}$.
		
		In summary, $Y_l=\{(u_{M(\alpha)},v_{M(1-\alpha)})\mid\alpha\in[0,1]\}$ where
		\[
			M(x)=\begin{pmatrix}x& 1-x\\1-x&x\end{pmatrix}
		\]
		
		Now $\pscl_B(v_{M(1-\alpha)})=0$ since all $t$ and $t^{-1}$ will cancel. Since $t$ has infinite order, $\mathcal{D}_B+V_B'=\{(t,t^{-1})+(t^{-1},t)\}+V_B'$ which implies $\eta_B(\beta[(t,t^{-1})+(t^{-1},t)])=\beta$ and further $\eta_B(v_{M(1-\alpha)})=\alpha$. Thus $\chi_{o,B}(v_{M(1-\alpha)})=-1+\alpha$ by Lemma \ref{chi0gen}. For $\chi_{o,A}$, it is more straight forward to use equation (\ref{chi0}). Thus we have $$\chi_o(u_{M(\alpha)},v_{M(1-\alpha)})=-2+\alpha+\sup\{\chi(S_A)/n\mid v(S_A)=n\cdot u_{M(\alpha)}\}.$$
		
		Therefore by Lemma \ref{detscl}, we only need to show $$1+2\scl_A(x+y)=\inf_{\alpha\in[0,1]\cap\mathbb{Q}} \{2-\alpha+\inf\{-\chi(S_A)/n\mid v(S_A)=n\cdot u_{M(\alpha)}\}\}.$$ 
		
		Let $\alpha=1$, then $u_{M(1)}=(x,x)+(y,y)$ and thus $$\inf\{-\chi(S_A)/n\mid v(S_A)=n\cdot u_{M(1)}\}=2\scl_A(x+y)$$ since $S_A$ has no disk components because $x$ and $y$ have infinite order. This gives the ``$\ge$'' direction. 
		
		Conversely, we just need to show that $2\scl_A(x+y)\le 1-\alpha-\chi(S_A)/n$ always holds. In fact, since $v(S_A)=nu_{M(\alpha)}=n(1-\alpha)[(x,y)+(y,x)]+n\alpha[(x,x)+(y,y)]$, there are $2n(1-\alpha)$ edges on the boundary of $S_A$, mapped to the wedge point $*$, that sit in between an $x$ and a $y$. Half of these edges are from $x$ to $y$ (referred to as a $(x,y)$-edge) and the other half are from $y$ to $x$ (referred to as a $(y,x)$-edge). Whenever we have a $(x,y)$-edge and a $(y,x)$-edge that lie on the same boundary component, we glue a rectangle to the surface with one edge glued to the $(x,y)$-edge and its opposite edge glued to the $(y,x)$-edge, and let $f$ map the rectangle to the wedge point. Such a surgery increases $-\chi$ by $1$. Repeating the process we get a new surface $S_A'$ such that $-\chi(S_A')=-\chi(S_A)+n(1-\alpha)$ and each boundary component either winds around $x$ several times or around $y$. This implies that $S_A'$ has no disk components since $x$ and $y$ have infinite order and $\partial S_A'$ winds around each of $x$ and $y$ $n$-times in total. Thus $1-\alpha-\chi(S_A)/n=-\chi^-(S_A')/n\ge\scl_A(x+y)$. This completes the proof.
	\end{proof}
	
	Finally we prove the following formula which was conjectured for free products of cyclic groups and proved for $G=\mathbb{Z}*(\mathbb{Z}/m\mathbb{Z})$ by Alden Walker in \cite{AWscylla}. It was pointed out by Timothy Susse that in the case of free product of cyclic groups, this is equivalent to Proposition 4.1 he proved in \cite{SusseSCL} by considering certain amalgams of abelian groups.
	\begin{prop}\label{abAB}
		Let $G=A*B$, $a\in A\backslash\{id\}$ and $b\in B\backslash\{id\}$, then $$\scl_G([a,b])=\frac{1}{2}-\frac{1}{k}$$ where $2\le k\le+\infty$ is the minimum of the orders of $a$ and $b$.
	\end{prop}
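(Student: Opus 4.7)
The plan is to reduce to the case where $A=\langle a\rangle$ and $B=\langle b\rangle$ are cyclic, then compute the right-hand side of Corollary~\ref{sclineq} explicitly. By Corollary~\ref{steal}, the inclusion $\langle a\rangle * \langle b\rangle \hookrightarrow A*B$ is an isometric embedding, so we may assume $A,B$ are cyclic; scl then vanishes on both factors and Corollary~\ref{sclineq} holds with equality.

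Set $Z = \{[a,b]\}$. The associated 1-manifold $L$ is a single circle cut by $\Gamma^{-1}(*)$ into four arcs $a,b,a^{-1},b^{-1}$ in cyclic order, none of which are self-loops. Encoding $v_A\in V_A'$ as a nonnegative $2\times 2$ matrix $\alpha$ with rows and columns indexed by $T(A)=\{a,a^{-1}\}$ and subject to $\alpha_{12}=\alpha_{21}$ (and similarly $\beta$ for $v_B$), the glue-up conditions inherited from the cyclic adjacency in $L$ read
\[
\alpha_{11} = \beta_{21},\quad \alpha_{12} = \beta_{11},\quad \alpha_{21} = \beta_{22},\quad \alpha_{22} = \beta_{12}.
\]
Together with the $\partial=0$ symmetries these force $\alpha_{11}=\alpha_{22}$ and $\beta_{11}=\beta_{22}$, so after imposing the degree-$1$ condition $Y_l$ collapses to the one-parameter family
\[
\alpha = \begin{pmatrix} 1-p & p \\ p & 1-p \end{pmatrix},\qquad \beta = \begin{pmatrix} p & 1-p \\ 1-p & p \end{pmatrix},\qquad p\in[0,1],
\]
with $|v_A|=|v_B|=2$.

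The central step is then to compute $\kappa_A(v_A)$ on this family. The elementary disk vectors $d_0=(a,a^{-1})+(a^{-1},a)$ (from the trivial word $aa^{-1}$), $d_+=n_a(a,a)$ and $d_-=n_a(a^{-1},a^{-1})$ (from $a^{\pm n_a}=1$) give the admissible decomposition
\[
v_A = p\,d_0 + \tfrac{1-p}{n_a}\,d_+ + \tfrac{1-p}{n_a}\,d_-,
\]
yielding $\kappa_A(v_A)\ge p+2(1-p)/n_a$. For the matching upper bound I would appeal to LP duality, exhibiting the dual functional $\mu=(1/n_a,\,1,\,1/n_a)$ in the coordinates $(\alpha_{11},\alpha_{12},\alpha_{22})$: one has $\mu\cdot v_A = p+2(1-p)/n_a$, and it remains to verify $\mu\cdot d\ge 1$ for every $d\in\mathcal{D}_A$. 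Writing $k_\pm$ for the number of $a^{\pm 1}$ letters in the cyclic word encoded by $d$ and $d_{12}$ for its run count, the identities $d_{11}+d_{12}=k_+$ and $d_{22}+d_{12}=k_-$ give $\mu\cdot d = (k_++k_-)/n_a + d_{12}(1-2/n_a)$. A short case split — either $d_{12}=0$ so $n_a\mid k_++k_-$, or $d_{12}\ge 1$ so $k_++k_-\ge 2d_{12}$ — confirms $\mu\cdot d\ge 1$ in all cases. The infinite-order case is the limit $n_a=\infty$ with $\mu=(0,1,0)$, where disk vectors automatically have $d_{12}\ge 1$. By symmetry $\kappa_B(v_B)=(1-p)+2p/n_b$.

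Substituting into Corollary~\ref{sclineq} and optimizing over $p\in[0,1]$,
\[
2\,\scl_G([a,b]) = \inf_{p\in[0,1]}\bigl(2-\kappa_A(v_A)-\kappa_B(v_B)\bigr) = 1 - 2\sup_{p\in[0,1]}\bigl(\tfrac{1-p}{n_a}+\tfrac{p}{n_b}\bigr) = 1 - \tfrac{2}{\min(n_a,n_b)},
\]
giving $\scl_G([a,b])=\tfrac12-\tfrac1k$. The main obstacle is the dual LP feasibility: one must control $\mu\cdot d$ uniformly over the entire set $\mathcal{D}_A$, which in principle can be intricate (as Example~\ref{diskex} illustrates), but here it reduces to the elementary word-combinatorial identities above.
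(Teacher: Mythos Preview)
Your proof is correct and follows the same route as the paper: reduce to cyclic factors via the isometric embedding theorem, parametrize $Y_l$ as a one-parameter family, compute $\kappa_A$ and $\kappa_B$ along it, and optimize. The only substantive addition is that you actually \emph{prove} the formula $\kappa_A(v_A)=p+2(1-p)/n_a$ via an explicit dual functional and a case split on disk vectors, whereas the paper simply asserts the analogous formula $\kappa_A(u_{M(\alpha)})=1-\alpha+2\alpha/k_a$ without justification; your duality argument is clean and the case analysis is correct.
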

	\begin{proof}
		By Theorem \ref{isoemb}, we may assume $A=\left<a\right>$ and $B=\left<b\right>$. Let $k_a$ and $k_b$ be the orders of $a$ and $b$ respectively.
		
		Similar to the proof of Proposition \ref{selfprod}, we have $$Y_l=\{(u_{M(\alpha)},v_{M(1-\alpha)})\mid \alpha\in[0,1]\}$$ where 
		\[
		M(x)=\begin{pmatrix}x& 1-x\\1-x&x\end{pmatrix}\quad \]
		and we get
		$$\kappa_A(u_{M(\alpha)})=1-\alpha+\frac{2\alpha}{k_a}, \quad \kappa_B(v_{M(1-\alpha)})=\alpha+\frac{2(1-\alpha)}{k_b}.$$
		Therefore $$-\chi_o(u_{M(\alpha)},v_{M(1-\alpha)})=2-1-\frac{2\alpha}{k_a}-\frac{2(1-\alpha)}{k_b}=1-\frac{2\alpha}{k_a}-\frac{2(1-\alpha)}{k_b},$$
		which has maximum $1-2/k$ for $\alpha\in[0,1]$, thus $$\scl_G([a,b])=\frac{1}{2}-\frac{1}{k}.$$
	\end{proof}
%	\begin{remark}
%		Other formulas conjectured in \cite{AWscylla} can be proved by the same method but the computations are more complicated.
%	\end{remark}
	\section{Walker's Conjecture}\label{wconj}
	Fix a rational chain $c$ in $F_n$, for any $\bm{o}=(o_1,o_2,\ldots,o_n)$, with $o_i\ge2$, let $c_{\bm{o}}$ be the image of $c$ under the natural homomorphism $\phi:F_n\to *_i\mathbb{Z}/o_i\mathbb{Z}$. It is natural to ask: how does $\scl(c_{\bm{o}})$ depend on $\bm{o}$?
	
	Based on computer experiments, Alden Walker conjectured in \cite{AWscylla} the following formulas:
	\[
	\begin{array}{lll}
		c=aba^{-2}b^{-2}+ab& \scl(c_{\bm{o}})=2/3-\{2/3,1/2\}/\min(o_1,o_2)&\text{if }\min(o_1,o_2)\ge2\\
		c=aba^{-3}b^{-3}& \scl(c_{\bm{o}})=3/4-1/o_1-1/o_2&\text{if }\min(o_1,o_2)\ge7\\
		c=a^2ba^{-1}b^{-1}a^{-2}bab^{-1}& \scl(c_{\bm{o}})=1/2-\{2,1\}/o_1&\text{if }\min(o_1,o_2)\ge3\\
		c=aba^2b^2a^3b^3a^{-5}b^{-5}& \scl(c_{\bm{o}})=1-\frac{1}{2o_1}-\frac{1}{2o_2}&\text{if }\min(o_1,o_2)\ge6\\
	\end{array}
	\]
	where $o_1$ and $o_2$ are the orders of $a$ and $b$ respectively, and brackets indicate that the coefficients depend on congruence classes: for example, $\{2,1\}/o_1$ means $2/o_1$ if $o_1\equiv 0$ mod $2$ and $1/o_1$ if $o_1\equiv 1$ mod $2$.
	
	Motivated by this, Walker proposed the following conjecture:
	\begin{conj}[Walker \cite{AWscylla}]\label{conj}
		For any fixed chain $c$ in $F_n$, $\scl(c_{\bm{o}})$ is piecewise quasilinear in $1/o_i$, i.e. there are some $p\in\mathbb{Z}_+$, and a finite partition of $\mathbb{Z}_{\ge 2}^n$, such that on each piece, fixing any congruence class of each $o_i$ mod $p$, $\scl(c_{\bm{o}})$ is linear in $1/o_i$.
	\end{conj}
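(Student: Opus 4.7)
The plan is to push the parametric viewpoint of Corollary \ref{sclineq} as far as possible. Since $\scl$ vanishes on every cyclic group, that corollary gives
$$2\,\scl(c_{\bm{o}}) \;=\; \min_{y=(v_i)\in Y_l} \sum_{i=1}^n\Bigl[\tfrac{1}{2}\lvert v_i\rvert - \kappa_{G_i}(v_i)\Bigr],$$
in which the rational polyhedron $Y_l\subset \prod_i V_{G_i}'$, the 1-manifold $L$, and the class $l$ depend only on $c$, so only the concave functions $\kappa_{G_i}$ feel the orders $o_i$. The conjecture therefore splits into two tasks: (i) give an $o_i$-uniform description of each $\kappa_{G_i}$, and (ii) propagate that description through a parametric LP to obtain piecewise quasilinearity of the optimal value in the parameters $1/o_i$.

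For task (i), fix the finite set $T(G_i)\subset\mathbb{Z}$ of exponents of the generator $a_i$ forced by the letters of $c$. A non-negative integer $v\in V_{G_i}$ is a disk vector precisely when it is realisable as a cyclic word in $\{a_i^k : k\in T(G_i)\}$ whose total exponent is divisible by $o_i$; the realisability part is already $\partial v=0$ and is independent of $o_i$, while the only $o_i$-dependent condition is the single linear congruence $\sum_j k_j\,v_j\equiv 0\pmod{o_i}$. By Corollary \ref{keyob} the cone $\conv(\mathcal{D}_{G_i})+V_{G_i}$ has finitely many facets, and the target is to show that, after restricting $o_i$ to a fixed congruence class modulo some chain-dependent period $p=p(c)$, each facet is supported by a linear functional whose coefficients are affine in $1/o_i$. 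The mechanism I hope for is that the vertices of $\conv(\mathcal{D}_{G_i})+V_{G_i}$ are generated, up to rays inside $V_{G_i}$, by finitely many ``short'' relations (bounded in terms of $c$ alone) together with bulk single-letter relations of the form $o_i\cdot(a_i^k,a_i^k)$; when scaled, the latter produce facet normals of the shape $\alpha + \beta/o_i$.

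Given such an $o_i$-uniform presentation of each $\kappa_{G_i}$ as a minimum of finitely many linear functionals with coefficients affine in $1/o_i$, task (ii) is standard parametric linear programming. Introducing slack variables rewrites the infimum as a genuine LP whose constraint matrix and right-hand side are fixed (determined by $Y_l$) while the objective coefficients are affine in $(1/o_1,\dots,1/o_n)$; sensitivity analysis then partitions $\mathbb{Z}_{\ge 2}^n$ into finitely many cells, refined by congruence classes mod $p$, on each of which a single dual basis is optimal and the optimal value is linear in $(1/o_1,\dots,1/o_n)$. The main obstacle is the facet-slope claim in (i): Example \ref{diskex} already shows that the disk-vector set in a single non-cyclic factor can be bounded by quadratic curves, and even in the cyclic case one must either rule out, or absorb into a finer congruence partition, facets of $\conv(\mathcal{D}_{G_i})+V_{G_i}$ whose slopes depend genuinely nonlinearly on $1/o_i$. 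If a facet with such a slope is ever active at an optimum of the LP above, the conjecture as stated must fail and only a quasi-rational analogue can survive; the bulk of the work is therefore a careful analysis of extremal cyclic words with letter exponents in the fixed set $T(G_i)$, using the single congruence mod $o_i$ to pin down the active facets and their $1/o_i$-dependence.
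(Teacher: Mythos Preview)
The statement you are attempting to prove is a \emph{conjecture}, and the paper does not prove it---it \emph{disproves} it. Immediately after stating Conjecture~\ref{conj}, the paper exhibits the chain $c=aba^{-2}b^{-2}a^2b^2a^{-1}b^{-1}$ and verifies (Proposition following the conjecture) that for $o_1=6K+3$ and $o_2$ sufficiently large,
\[
\scl(c_{\bm{o}})=1-\frac{3(o_1-1)}{o_1(o_1+1)},
\]
which is not linear in $1/o_1$ on any congruence class. The paper then proves only the weaker Theorem~\ref{weakWconj}: $\scl(c_{\bm{o}})$ is piecewise quasi-\emph{rational} in $\bm{o}$, not quasilinear in $1/o_i$.

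Your own proposal already locates the failure point precisely. You correctly reduce everything to the question of whether the facets of $\conv(\mathcal{D}_{G_i})+V_{G_i}$ are supported by functionals whose coefficients are affine in $1/o_i$, and you explicitly flag that ``if a facet with such a slope is ever active at an optimum of the LP above, the conjecture as stated must fail and only a quasi-rational analogue can survive.'' That is exactly what happens. The paper's Lemma~\ref{diskfamily} (built on Lemma~\ref{std} and the Calegari--Walker integer-hull lemma) shows that the \emph{vertices} of $\conv(\mathcal{D}_{G_i})+V_{G_i}$ depend linearly on $o_i$ within congruence classes---but this makes the facet functionals, and hence $\kappa_{G_i}$, have coefficients in $\mathbb{Q}(o_i)$ rather than affine in $1/o_i$. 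Carrying this through the LP yields quasi-rationality, which is Theorem~\ref{weakWconj}, and the counter-example above shows one cannot do better. So your task~(i) fails as stated, and your conditional clause is the actual theorem.
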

	
	Computer experiments suggest that this conjecture is false.
	
	\begin{example}
		The following formula holds experimentally for $n=2$ and $c=aba^{-2}b^{-2}a^2b^2a^{-1}b^{-1}$ with $o_2/2>o_1>10$:
		\[
		\scl(c_{\bm{o}})=\left\{\begin{array}{ll}
		1-\frac{3(o_1-1)}{o_1(o_1+1)}& o_1\equiv 1, 3, 5 \mod 6\\
		1-\frac{3}{o_1}& o_1\equiv 0 \mod 6\\
		1-\frac{15}{5o_1+8}& o_1\equiv 2 \mod 6\\
		1-\frac{3}{o_1+2}& o_1\equiv 4 \mod 6
		\end{array}\right.
		\] 
		This is verified by the computer program {\ttfamily scallop} \cite{CWscallop} for $o_2=100$ and $10<o_1<50$. We see from this example that the denominator could be higher degree polynomial in $o_1$, and even when it is linear in $o_1$, it could be inhomogeneous.
	\end{example}
	
	To seriously disprove the conjecture, it suffices to verify a special case of the formula above:
	\begin{prop}
		For $n=2$ and $c=aba^{-2}b^{-2}a^2b^2a^{-1}b^{-1}$, there exist constants $r,s\ge1$ such that when $o_1=6K+3$, $o_2=6L+3$ with $L\ge rK$ and $K\ge s$, we have 
		$$\scl(c_{\bm{o}})=1-\frac{3(o_1-1)}{o_1(o_1+1)}.$$
	\end{prop}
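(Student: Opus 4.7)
The plan is to reduce the computation of $\scl(c_{\bm o})$ to an explicit linear program via Corollary \ref{sclineq}. Since $\scl$ vanishes on both $\mathbb{Z}/o_1\mathbb{Z}$ and $\mathbb{Z}/o_2\mathbb{Z}$, that corollary gives the equality
$$2\scl(c_{\bm o})=\inf_{(v_A,v_B)\in Y_l}\left[\tfrac12|v_A|-\kappa_A(v_A)+\tfrac12|v_B|-\kappa_B(v_B)\right],$$
where $A=\mathbb{Z}/o_1\mathbb{Z}$, $B=\mathbb{Z}/o_2\mathbb{Z}$, and by Corollary \ref{keyob} both $\kappa_A$ and $\kappa_B$ are pointwise minima of finitely many rational linear functionals. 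Thus the problem becomes a finite LP on a rational polyhedron.

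First I would set up the combinatorics explicitly. Label the four $a$-segments of $c$ as $P_1=a$, $P_2=a^{-2}$, $P_3=a^2$, $P_4=a^{-1}$ and the four $b$-segments as $Q_1=b$, $Q_2=b^{-2}$, $Q_3=b^2$, $Q_4=b^{-1}$, so that the cyclic order in $L$ is $P_1Q_1P_2Q_2P_3Q_3P_4Q_4$. The coordinates of $v_A$ are the multiplicities of pairs $(P_i,P_j)\in T_2(A)$, analogously for $v_B$. The glue-up condition induced by the cyclic order, together with $\partial v_A=\partial v_B=0$ and the boundary-degree condition $d(v_A,v_B)=l$, cuts $Y_l$ out as a polytope governed by a modest number of free parameters.

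For the upper bound $\scl(c_{\bm o})\le 1-3(o_1-1)/(o_1(o_1+1))$, I would exhibit an explicit admissible surface of degree proportional to $o_1(o_1+1)$, whose $A$-side consists mostly of disks whose boundary words sum to $0$ in $\mathbb{Z}/o_1\mathbb{Z}$ and whose $B$-side, taking advantage of $o_2$ being large, is much more efficient. The divisibility $3\mid o_1$ is essential for realizing the asserted count of disks. For the lower bound I would identify the optimal vertex of $Y_l$ for the LP and verify that its value matches the claimed formula.

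The hard part is the lower bound, which requires a sufficiently precise description of the disk vectors $\mathcal{D}_A$ for $\mathbb{Z}/o_1\mathbb{Z}$ with $T(A)=\{a,a^{-2},a^2,a^{-1}\}$ to pin down the LP optimum. The extremal rays of $\conv(\mathcal{D}_A)+V_A$ and the binding linear pieces of $\kappa_A$ depend on $o_1\bmod 6$; only in the regime $o_1\equiv 3\pmod 6$ with $K\ge s$ (so that sporadic small disks do not interfere) does the optimum assume the stated form. The hypothesis $L\ge rK$ is used to guarantee that $o_2$ is large enough for the $B$-side constraints to be slack, so the optimum is governed effectively by a one-parameter optimization on the $A$-side; matching the resulting extremal value with $1-3(o_1-1)/(o_1(o_1+1))$ is then a direct but intricate computation.
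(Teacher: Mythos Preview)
Your plan is the same reduction the paper uses: invoke the equality case of Corollary~\ref{sclineq} to turn $\scl(c_{\bm o})$ into the optimization of $\sum(\tfrac12|v|-\kappa)$ over $Y_l$, then handle the two inequalities separately. The qualitative explanation of why $o_1\equiv 3\pmod 6$, $K\ge s$, and $L\ge rK$ enter is also in line with the paper.

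Where your proposal stays schematic, the paper is concrete. For the upper bound it does not build a surface directly; it writes down an explicit pair $(v_A,v_B)\in Y_l$ together with explicit decompositions of $v_A$ and $v_B$ into disk vectors (seven brackets on the $A$-side, four on the $B$-side), reads off lower bounds for $\kappa_A(v_A)$ and $\kappa_B(v_B)$, and checks that the resulting value equals $1-\tfrac{3(o_1-1)}{o_1(o_1+1)}$. Your ``exhibit an admissible surface of degree proportional to $o_1(o_1+1)$'' would amount to the same thing once unwound, but the vector/disk-decomposition bookkeeping is what actually makes the computation tractable.

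For the lower bound, your suggestion to ``identify the optimal vertex of $Y_l$'' is not quite the paper's route and hides the real work. The objective is $\kappa_A+\kappa_B$, which is itself defined by an inner maximization; the paper first \emph{linearizes} by enumerating all extremal disk vectors (there are $169$ on the $A$-side, each depending linearly on $K$), obtaining a genuine LP in variables indexed by simple loops and extremal disks. It then certifies optimality not by locating a primal vertex but by LP duality: a dual feasible $y_0$ (guessed from computer experiments at small $K,L$) is produced with $C^T y_0\ge f$ and complementary slackness against the explicit primal $(v_A,v_B)$. The constants $r,s$ arise because the inequality $C^T y_0\ge f$ is only checked for $K$ and $L/K$ large. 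If you pursue your primal approach you will still need the full enumeration of extremal disks to know $\kappa_A$ exactly, and you will still need a certificate (dual or otherwise) that your candidate is optimal; so in practice you would end up doing what the paper does.
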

	We prove the ``$\le$'' direction and give an outline of the proof for the other direction.
	\begin{proof}
		Follow the notations in Section \ref{sec4} and apply our method to $G=A*B$ with $A=\mathbb{Z}/o_1\mathbb{Z}$ and $B=\mathbb{Z}/o_2\mathbb{Z}$. Then $T(A)=\{a,a^{-2},a^2,a^{-1}\}$. Let
		\begin{eqnarray*}
		v_A&=&\frac{1}{3K+2}[(a,a^{-1})+(a^{-1},a)]+\frac{1}{3K+2}[(a^2,a^{-2})+(a^{-2},a^2)]\\
		&+&\frac{K}{(2K+1)(3K+2)}[(6K+3)(a,a)]+\frac{1}{(6K+3)(3K+2)}[(6K+3)(a^{-1},a^{-1})]\\
		&+&\frac{1}{3K+2}[(a,a^2)+3K(a^2,a^2)+(a^2,a)]\\
		&+&\frac{1}{3K(3K+2)}[(a^{-1},a^{-2})+3K(a^{-2},a^{-2})+(a^{-2},a^{-1})]\\
		&+&\frac{9K^2-1}{K(3K+2)(6K+3)}[(2K+1)(a^{-2},a^{-1})+(2K+1)(a^{-2},a^{-1})],
		\end{eqnarray*}
		where each bracket is a disk vector. In particular, we know $v_A\in V_A$ and
		\begin{eqnarray*}
		\kappa_{A}(v_A)&\ge&\frac{3}{3K+2}+\frac{K}{(2K+1)(3K+2)}+\frac{1}{(6K+3)(3K+2)}\\
		&+&\frac{1}{3K(3K+2)}+\frac{9K^2-1}{K(3K+2)(6K+3)}\\
		&=&\frac{30K+12}{(3K+2)(6K+3)}.
		\end{eqnarray*}
		
		Similarly, let
		\begin{eqnarray*}
		v_B&=&\frac{3K}{3K+2}[(b,b^{-1})+(b^{-1},b)]+\frac{3K}{3K+2}[(b^2,b^{-2})+(b^{-2},b^2)]\\
		&+&\frac{1}{3K+2}[(b^2,b^{-1})+(b^{-1},b^{-1})+(b^{-1},b^2)]\\
		&+&\frac{1}{3K+2}[(b,b^{-2})+(b^{-2},b^{-2})+(b^{-2},b)+(b,b^2)+(b^2,b)]\in V_B,
		\end{eqnarray*}
		where each bracket is a disk vector and
		$$\kappa_B(v_B)\ge 2-\frac{2}{3K+2}.$$
		
		One can check that $v_A$ and $v_B$ satisfy the gluing condition and $(v_A,v_B)\in Y_l$ where $l$ is the fundamental class of the loop representing the chain $c$. Therefore by Corollary \ref{sclineq} (the equality part), for any $K,L\ge0$, we have  $$\scl(c_{\bm{o}})\le \frac{1}{2}[2-\kappa_{A}(v_A)]+\frac{1}{2}[2-\kappa_B(v_B)]\le1-\frac{18K+6}{(6K+3)(6K+4)}=1-\frac{3(o_1-1)}{o_1(o_1+1)}.$$
		
		For the other direction, we only need to show that $(v_A,v_B)$ constructed above achieves the maximum of the optimization problem
		$$(P_0)\quad \text{maximize: }\kappa_A(u)+\kappa_B(w)\quad \text{subject to: } (u,w)\in Y_l,$$ and that the estimates for $\kappa_{A}(v_A)$ and $\kappa_B(v_B)$ above are sharp. The key idea is to use duality of linear programming. Here is an outline:
		
		\begin{enumerate}
			\item We linearize this optimization problem $(P_0)$ in a way similar to \cite{CFL}. On the ``$A$'' side, consider the directed graph (as in \cite{DSSS}) with vertex set $T(A)$ and directed edge set $T(A)^2$. Let $SL_A$ be the set of directed simple (i.e. visiting each vertex at most once) loops. Each directed loop cyclically visiting vertices $a_1,\ldots,a_n$ corresponds to a vector $\sum_{i=1}^{n}(a_i,a_{i+1})$ in $V_A'$. Then disk vectors can be written (not uniquely) as linear combinations of simple loops with non-negative integral coefficients. One can enumerate disk vectors that are \emph{extremal}, i.e. cannot be written as a convex combination of other disk vectors plus a non-negative linear combination of simple loops. It turns out that there are finitely many ($169$) extremal disk vectors and each depends linearly on $K$, which is compatible with Lemma \ref{diskfamily} below. Denote the set of extremal disk vectors by $ED_A$. Obtain $SL_B$ and $ED_B$ on the ``$B$'' side simply by substituting $a$ and $K$ by $b$ and $L$ respectively since two sides have the same structure. Then $(P_0)$ can be linearized as:
			$$(P)\quad \text{maximize: }f^{T}x \quad \text{subject to: } Cx=b \text{ and } x\ge\bm{0} \text{ (entrywise)},$$
			where $x=(x_i)$ and $f=(f_i)$ are indexed by $SL_A\sqcup ED_A\sqcup SL_B\sqcup ED_B$, $f_i=1$ if $i\in ED_A\sqcup ED_B$, $f_i=0$ otherwise, and the constraint $Cx=b$ corresponds to gluing and normalization conditions.
			
			\item The way we decompose $v_A,v_B$ into disk vectors gives rise to a feasible solution $x_0$ to $(P)$. Our goal is to show that $x_0$ achieves the maximum. To accomplish this, it suffices to find $y_0$ such that $$C^{T}y_0\ge f \text{ (entrywise)} \text{ and }x_0^{T}C^{T}y_0=x_0^{T}f.$$ This proves the maximality because $$f^{T}x=x^{T}f\le x^{T}C^{T}y_0=b^{T}y_0=x_0^{T}C^{T}y_0=x_0^{T}f=f^{T}x_0.$$
			One such $y_0$ (in an explicit formula involving $K$ and $L$) can be guessed out via results found by computers for small values of $K$ and $L$ ($v_A$ and $v_B$ are also found in this way). The constants $r$ and $s$ come into the statement because the author only checked $C^{T}y_0\ge f$ when $L/K$ and $K$ are large enough.
		\end{enumerate}
	We omit the details since it is tedious and takes too much space to enumerate the extremal disk vectors and check $C^{T}y_0\ge f$.
	\end{proof}

	Nevertheless, a weaker version of Walker's conjecture is true:
	
	\begin{theorem}\label{weakWconj}
		For any fixed rational chain $c$ in $F_n$, $\scl(c_{\bm{o}})$ is piecewise quasi-rational in $\bm{o}$, i.e. there are some $p\in\mathbb{Z}_+$, and a finite partition of $\mathbb{Z}_{\ge 2}^n$, such that on each piece, fixing any congruence class of each $o_i$ mod $p$, $\scl(c_{\bm{o}})$ is in $\mathbb{Q}(\bm{o})$.
	\end{theorem}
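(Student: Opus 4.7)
The plan is to realize $\scl(c_{\bm o})$ as the optimum of a linear program whose data depends quasi-affinely on $\bm o$, and then apply parametric linear programming. Fix a rational chain $c$ and apply the construction of Section~\ref{sec3} to $G = *_i A_i$ with $A_i = \mathbb{Z}/o_i\mathbb{Z}$. The 1-manifold $L$, the linear functional $|\cdot|$, and the polyhedron $Y_l$ are read off from $c$ alone and are independent of $\bm o$ (at least for $o_i$ larger than the exponents appearing in $c$; the finitely many small cases can be handled separately). Since $\scl$ vanishes on each $A_i$, the equality case of Corollary~\ref{sclineq} gives
\[
2\,\scl(c_{\bm o}) \;=\; \inf_{(v_{A_i})\in Y_l} \sum_i \Bigl[\tfrac{1}{2}\lvert v_{A_i}\rvert - \kappa_{A_i}(v_{A_i})\Bigr],
\]
so all $\bm o$-dependence is concentrated in the concave functions $\kappa_{A_i}$.

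The heart of the argument is a \emph{disk family lemma}: there exist $p \in \mathbb{Z}_+$ and a finite collection $\{v_\alpha(o)\}_{\alpha \in \mathcal{A}}$ of rational affine vector-valued functions of $o$ such that, for each residue class of $o$ mod $p$, the vertices of $\conv(\mathcal{D}_{A_i}) + V_{A_i}$ lie among $\{v_\alpha(o) : \alpha \in \mathcal{A}_o\}$, where $\mathcal{A}_o$ depends only on the residue class. A disk vector for $\mathbb{Z}/o\mathbb{Z}$ is encoded by a connected Eulerian multigraph on the fixed vertex set $T(A_i) = \{a^{k_1}, \ldots, a^{k_r}\}$ whose total exponent $L(v) = \sum_\tau k_\tau \cdot \deg_v(\tau)$ is a positive multiple of $o$. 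Classify extremal disk vectors by the underlying skeleton (the set of edges with positive multiplicity); each skeleton spans a simplicial sub-cone of $V_{A_i}$ on which the divisibility condition $o \mid L(v)$ cuts out an affine sublattice. Applying Lemma~\ref{keylemma} to this finite collection of sub-cones together with their integer disk-vector subsets produces a finite list of templates, each giving an affine family $v_\alpha(o) = u_\alpha + o\, w_\alpha$ valid on a residue class mod some $p_\alpha$; setting $p = \mathrm{lcm}\{p_\alpha\}$ yields the claim. By Corollary~\ref{keyob}, each $\kappa_{A_i}(v)$ is thereby expressible, on each residue class of $o_i$ mod $p$, as the minimum of finitely many linear functions of $v$ whose coefficients are rational affine functions of $o_i$.

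With this in hand, introduce slack variables $s_i$ so that the infimum in Lemma~\ref{detscl} becomes a standard linear program: minimize $\sum_i \bigl(\tfrac{1}{2}\lvert v_{A_i}\rvert - s_i\bigr)$ over $(v_{A_i})\in Y_l$ and $s_i$ subject to $s_i \le L^{(o_i)}_{F_i}(v_{A_i})$ for every linear piece $F_i$ of $\kappa_{A_i}$. On each cell of the partition of $\mathbb{Z}_{\ge 2}^n$ determined by residue classes mod $p$, and further refined by which pieces $F_i$ are tight at the optimum, the LP has fixed constraint geometry and coefficients that are affine in $\bm o$; standard parametric LP theory then shows the optimal value is a rational function of $\bm o$ on each such cell, proving the theorem. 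The main obstacle is the disk family lemma: Corollary~\ref{keyob} guarantees finitely many vertices for each fixed $o$, but the crux is producing a \emph{uniform} quasi-affine description as $o$ varies, which requires controlling how the divisibility constraint $o \mid L(v)$ interacts with the Eulerian/connectedness structure. The counter-example in the preceding proposition shows that a stronger quasi-linear statement for $\scl(c_{\bm o})$ itself fails, so the passage through the parametric LP (which can manufacture genuinely rational behaviour from affine inputs) is essential.
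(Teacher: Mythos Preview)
Your overall architecture matches the paper's proof exactly: reduce to the equality case of Corollary~\ref{sclineq}, isolate the $\bm o$-dependence in $\kappa_{A_i}$, prove a ``disk family lemma'' asserting that the vertices of $\conv(\mathcal{D}_{A_i})+V_{A_i}$ move affinely in $o_i$ on residue classes, and finish with parametric linear programming. You also correctly identify the disk family lemma as the crux.

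The gap is in your justification of that lemma. You write that ``applying Lemma~\ref{keylemma} to this finite collection of sub-cones together with their integer disk-vector subsets produces a finite list of templates, each giving an affine family $v_\alpha(o)=u_\alpha+o\,w_\alpha$.'' But Lemma~\ref{keylemma} is a statement about a \emph{fixed} cone $C$ and a \emph{fixed} lattice subset $D$: it returns a finite $D'$ with $D+C=D'+C$, with no control whatsoever on how $D'$ varies if $D$ depends on a parameter. Here $D_k=\{v\in V_A\cap\mathbb{Z}^N:\supp(v)\text{ connected},\ \tilde h(v)\in k\mathbb{Z}\}$ genuinely depends on $k$, and nothing in Lemma~\ref{keylemma} (or Corollary~\ref{keyob}) tells you that the resulting vertex sets $D_k'$ organize into affine families. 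This is exactly the ``uniform quasi-affine description'' that you yourself flag as the main obstacle, and your proposal does not surmount it.

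What the paper actually does here is substantially more work. It first decomposes $V_A$ into \emph{unimodular} simplicial open cones via Barvinok's theorem (your skeleton decomposition need not be simplicial), and then proves a separate technical lemma (Lemma~\ref{std}) analyzing, for a linear functional $f$ on $\mathbb{R}^d_{>0}$, the vertices of $\conv(E_k)+V_k$ where $E_k$ is the set of integer points in $f^{-1}(k)\cap\mathbb{R}^d_{>0}$. The proof of that lemma splits into cases according to the signs of the coefficients of $f$, constructs explicit compact polytopes $Q_k$ whose vertices are visibly affine in $k$, and then invokes the Calegari--Walker integer hull theorem (Lemma~\ref{QIQ}): the integer hull of a polytope whose vertices depend affinely on $k$ has vertices that depend affinely on $k$ on residue classes. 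This last result is the real engine behind the quasi-affine behaviour, and it is entirely absent from your sketch. Without it (or an equivalent), the step from ``finitely many vertices for each $o$'' to ``affine families of vertices in $o$'' is an assertion, not an argument.
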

	
	Timothy Susse (\cite{SusseSCL}, Corollary 4.14) proved the same result by considering a fixed chain in a family of amalgamations of free abelian groups, whose projection to the free product of cyclic groups preserves scl. Our proof is independent and new.
	
	We focus on a single factor $A=\mathbb{Z}/k\mathbb{Z}$. Using notations as in Section \ref{sec4}, the key is to show that the vertices of $\conv(\mathcal{D}_A+V_A)$ behave nicely as $k$ varies in congruence classes (see Lemma \ref{diskfamily}). Since $H_1(A;\mathbb{R})$=0, we have $h=0$, thus $V_A$ consists of non-negative vectors in $C_2(A)\cap\{\partial=0\}$ and does not depend on $k$. However $\mathcal{D}_A$ typically depends on $k$, and we denote it by $D_k$ to emphasize the dependence.
	
	We first describe $D_k$. For simplicity, we assume $n=2$, and $c=a_1b_1\ldots a_mb_m$ is a single word, but the proof of lemmas are the same for the general case. Consider the directed graph $X(A)$ with vertex set $T(A)$ and edge set $T_2(A)$. Then each $v\in V_A$ defines non-negative weights on the directed edges, and its support, $\supp(v)$, is the subgraph of $X(A)$ consisting of edges with positive weights.
	
	Let $a,b$ be the generators of $F_2$ giving the free product structure. Then each $a_i=a^{t_i}$ for some $t_i\in\mathbb{Z}\backslash\{0\}$. Let $\tilde{h}: C_2(A)\to\mathbb{R}$ be the linear map such that for any $(a_i,a_j)\in T_2(A)$, $\tilde{h}(a_i,a_j)=(t_i+t_j)/2$.
	
	Then it is easy to see that $D_k$ is the set of integer vectors $v$ in $V_A$ such that $\tilde{h}(v)\in k\mathbb{Z}$ and $\supp(v)$ is connected and nonempty (see \cite{DSSS} for details).
	
	We can decompose $V_A$ into finitely many simplicial rational open faces, i.e. each is of the form $$\left\{\left.\sum_{i=1}^{d} t_iv_i\right|t_i>0 \right\},$$ for some $d\ge1$ and a set of linearly independent rational vectors $v_i$. Moreover, each simplicial rational cone can be decomposed into finitely many \emph{unimodular} cones, i.e. where we can take the set of $v_i$ to be unimodular, by Barvinok's theorem (\cite{BIntpt}, Chapter 16). So we first prove the following key lemma leading to Lemma \ref{diskfamily} and Theorem \ref{weakWconj}. %Thus it suffices to prove Lemma \ref{diskfamily} for a single open unimodular cone. This case follows from the second part of Lemma \ref{std} by considering $f=\tilde{h}$ and $-\tilde{h}$. 
	
	\begin{lemma}\label{std}
		Let $V=\mathbb{R}^d_{>0}$ and $f(x)=\sum a_ix_i$ ($a_i\in\mathbb{Q}$) be a rational linear function. Let $V_k=f^{-1}(k)\cap V$ and $E_k$ be the set of integer points in $V_k$. Then there are $M, p\in\mathbb{Z}_+$ such that:
		\begin{enumerate}
			\item for each congruence class mod $p$, there are finitely many points $v_j(k)\in V$ that depend linearly on $k$ such that $\conv(E_k+V_k)=\conv(\{v_j(k)\}+V_k)$ for any $k>M$ in this given congruence class;
			\item for each congruence class mod $p$, there is a finite set $F_k$ of points depending linearly on $k$, such that $$\conv(\cup_{t\in\mathbb{Z}_+} E_{tk}+V)=\conv(F_k+V)$$ for any $k>M$ in the given congruence class mod $p$. More precisely, we can take $F_k=\cup_{t=1}^{p}\{v_j(tk)\}$ for any $k>M$.
		\end{enumerate}
	\end{lemma}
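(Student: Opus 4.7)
The plan is to reduce, via the Barvinok decomposition into unimodular cones invoked immediately before the statement, to the case $V=\mathbb{R}^d_{>0}$ with standard integer lattice $\mathbb{Z}^d$ and $f(x)=\sum a_ix_i$ having integer coefficients (after clearing denominators). In this reduced setup $V_k$ is a rational polytope in the hyperplane $\{f=k\}$ whose recession cone $V\cap\{f=0\}$ is independent of $k$ and whose bounded vertices, the points $(k/a_i)e_i$ with $a_i>0$, move linearly with $k$. The extreme points of the unbounded polyhedron $\conv(E_k+V_k)$ all lie in $E_k$ and cluster near the vertices of $V_k$.

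For part (1), the plan is to show that near each bounded vertex $(k/a_i)e_i$ of $V_k$ the local integer structure depends only on $k$ modulo a suitable period $p$: once $k\bmod p$ is fixed and $k>M$, the lattice configuration within a bounded neighborhood of the vertex shifts with $k$ purely by integer translation. Concretely, for each of the finitely many directions $c\in V^*$ that can support a facet of $\conv(E_k+V_k)$, the optimum $\min\{\langle c,x\rangle:x\in E_k\}$ is a parametric integer program with affine parameter $k$, and by classical results (Kannan; Barvinok) its optimal value and an optimizer are quasi-linear in $k$. Collecting one such optimizer per facet direction gives the vertex set $\{v_j(k)\}$ of part (1), after taking $p$ to be a common period and $M$ large enough that the combinatorial type of $\conv(E_k+V_k)$ stabilizes inside each residue class.

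For part (2), $S_k=\bigcup_{t\in\mathbb{Z}_+}E_{tk}$ sits in $V\cap\mathbb{Z}^d$, and Lemma \ref{keylemma} already guarantees that $\conv(S_k+V)$ is a rational polyhedron with finitely many vertices, all drawn from $S_k$. The plan is to argue that every such vertex already comes from $E_{tk}$ with $1\le t\le p$, so that $F_k=\bigcup_{t=1}^{p}\{v_j(tk)\}$ works. Granting part (1), the dominance argument is short: within each residue class of $k\bmod p$ every $v_j(k)$ equals the restriction of a single affine function $\alpha_j+\beta_jk$, and $\beta_j$ must be coordinatewise non-negative because $v_j(k)\in V$ for all large $k$ in the class; then for any $t>p$, periodicity of $tk\bmod p$ yields some $1\le t'\le p$ with $t'<t$ and $tk\equiv t'k\pmod p$, so that $v_j(tk)-v_j(t'k)=\beta_j(t-t')k\in\overline{V}$ and $v_j(tk)$ is dominated, hence cannot be a vertex of $\conv(S_k+V)$.

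The main obstacle will be the parametric integer programming step in part (1): establishing a uniform period $p$ and threshold $M$ across all facet directions, and verifying that no new facet direction appears as $k$ grows. The cleanest route seems to be via Barvinok's short rational generating function for $E_k$, which packages the quasi-linear structure of its vertices into a single short rational function of $k$ and makes their dependence on $k$ transparent.
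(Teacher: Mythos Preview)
Your argument for part~(2) is essentially the paper's: once (1) is established, the dominance $v_j(tk)\in v_j(t'k)+\overline V$ for $t>t'$ in the same residue class shows that only $1\le t\le p$ contributes vertices. No issue there.

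Part~(1) is where you diverge from the paper, and where the gap lies. You propose to enumerate the facet directions of $\conv(E_k+V_k)$ and, for each such direction $c$, invoke parametric integer programming to get a quasi-linear optimizer. But you correctly flag the obstacle yourself: you need the set of facet directions to be \emph{finite and eventually independent of $k$}, and neither Kannan's parametric IP nor Barvinok's rational generating functions gives this directly. Indeed, the number of facets of an integer hull of a dilating polytope can vary with $k$; the fact that it eventually stabilizes within residue classes is precisely the content of the Calegari--Walker result (Lemma~\ref{QIQ} in the paper), not a corollary of generic parametric IP. So your plan defers the actual difficulty to a black box that is not quite the right shape.

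The paper's route is more hands-on and avoids this circularity. After reducing to all $a_i\neq 0$, it splits into cases. If all $a_i>0$, then $V_k$ is a bounded simplex whose vertices $ke_i/a_i$ are linear in $k$; the integer points of $V_k$ coincide with those of the compact polytope $\{x_i\ge 1\}\cap\conv\{ke_i/a_i\}$, whose vertices are explicitly linear in $k$, and then Lemma~\ref{QIQ} applies directly. In the mixed-sign case the paper writes $V_k=\conv\{\xi_i:i\in P\}+V_0$ and covers it by the translated cones $\xi_i+V_0$ (whose integer hulls are rigid translates once $k$ is fixed mod a suitable $p$) together with a residual \emph{compact} region $Q_k$ whose vertices are computed explicitly and shown to be linear in $k$; Lemma~\ref{QIQ} then handles $Q_k$. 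The point is that the paper never needs to know the facet directions of the integer hull in advance: it reduces to compact polytopes with linearly-moving vertices and cites Calegari--Walker as a black box for exactly that situation.
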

	Lemma \ref{std} is similar in spirit to the following special case of the main theorem of \cite{CWInthull}, which we will use in our proof.
	
	\begin{lemma}[Calegari--Walker \cite{CWInthull}]\label{QIQ}
		Let $\{\xi_i(k)\}$ be a finite set of points depending linearly on $k$, and then there are $M,p\in\mathbb{Z}_+$ such that the vertices (finitely many) of the integer hull of $\conv(\xi_i(k))$ depend linearly on $k>M$ in each congruence class mod $p$. 
	\end{lemma}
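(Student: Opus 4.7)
The plan is to reduce the integer-hull question for $P_k \defeq \conv(\xi_i(k))$ to a local analysis at each vertex, and then to the quasi-periodic structure of lattice points in translates of a fixed rational pointed cone. First I would argue that for all $k$ larger than some threshold (which I absorb into $M$), the combinatorial type of $P_k$ stabilizes: which $\xi_i(k)$ are vertices, the face lattice, and the outer normal fan are all constant. Each incidence is encoded by the sign of an affine polynomial in $k$, and such signs become eventually constant. Consequently, every vertex $v_i(k) = \alpha_i + \beta_i k$ has a fixed rational pointed tangent cone $C_i$, and for large $k$ the pairwise distances between distinct $v_i(k)$'s grow linearly in $k$, so near each $v_i(k)$ the polytope $P_k$ coincides with $v_i(k) + C_i$ out to a radius growing linearly in $k$.

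Therefore, for $k > M$, the vertices of the integer hull of $P_k$ partition into contributions, one per $v_i(k)$, and each contribution consists of those vertices of the integer hull of $v_i(k) + C_i$ lying within a suitably large neighborhood of $v_i(k)$. It thus suffices to show that the vertices of the integer hull of $v + C$ depend quasi-linearly on $v$ when $C$ is a fixed rational pointed cone. I would decompose $C$ (up to lower-dimensional boundary) into finitely many half-open unimodular rational subcones $u_j + U_j$ via Barvinok's theorem \cite{BIntpt}, where each $U_j = \cone(e_{j,1}, \dots, e_{j,d_j})$ with $e_{j,\ell}$ part of a $\mathbb{Z}$-basis of the relevant sublattice. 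For each such piece, the lattice points in $v + u_j + U_j$ are a translated copy of $U_j \cap \mathbb{Z}^d$ anchored at a unique corner point $(v+u_j)^{\star}$ that depends only on the residue of $v + u_j$ modulo $\mathbb{Z}\langle e_{j,1}, \dots, e_{j,d_j}\rangle$.

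Choose $p$ to be a common multiple of all relevant lattice indices, taken over the finitely many unimodular pieces in the Barvinok decompositions of all tangent cones $C_i$. Then within each congruence class of $k \bmod p$, each corner integer point $(v_i(k)+u_j)^{\star}$ is an affine function of $k$. Taking the finite union over vertices $v_i(k)$ and over pieces in the decomposition yields a finite set of candidate integer-hull vertices, each affine in $k$ on each residue class; for $k > M$ this set contains every actual vertex of the integer hull of $P_k$. The main obstacle is verifying that the corner points arising from Barvinok's decomposition are genuinely vertices of the integer hull and not merely a generating set for the lattice points; for this I would use the corner-polyhedron viewpoint, arguing that the true vertices are the minimal elements of $(v + C) \cap \mathbb{Z}^d$ under the $C$-order, and that these minimals are captured (after discarding dominated pieces of the decomposition) by the finitely many $(v+u_j)^{\star}$, whose quasi-linear dependence on $k$ then promotes to the integer hull as a whole.
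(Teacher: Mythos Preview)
The paper does not prove this lemma at all: it is stated as a special case of the main theorem of \cite{CWInthull} and simply invoked. So there is no in-paper argument to compare against; I can only assess your sketch on its own merits.

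Your argument rests on the claim that for $k\gg 0$ the outer normal fan of $P_k=\conv(\xi_i(k))$ is constant and each vertex $v_i(k)$ has a \emph{fixed} rational tangent cone $C_i$. This is false in general. The edge vectors at $v_i(k)$ are $\xi_j(k)-\xi_i(k)=(\alpha_j-\alpha_i)+(\beta_j-\beta_i)k$, whose directions genuinely vary with $k$ whenever $\beta_j\ne\beta_i$ and $\alpha_j-\alpha_i$ is not parallel to $\beta_j-\beta_i$. Hence the tangent cones, the facet normals, and the normal fan all move with $k$; the face lattice stabilizes, but the metric/conical data you need does not. A concrete example: take $\xi_1(k)=(0,\tfrac12)$, $\xi_2(k)=(k,-\tfrac12)$, $\xi_3(k)=(2k,\tfrac12)$. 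The tangent cone at $\xi_1$ is spanned by $(k,-1)$ and $(1,0)$, which rotates as $k$ grows. The integer hull of $P_k$ here is the segment from $(\lceil k/2\rceil,0)$ to $(\lfloor 3k/2\rfloor,0)$; its vertices are at distance $\sim k/2$ from every vertex of $P_k$, so your ``partition into contributions near each $v_i(k)$'' breaks down as well. The quasi-linearity in this example is real (period $p=2$), but it is not explained by a fixed corner-polyhedron analysis.

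To make a direct proof work you would need to track how the integer hull of a cone $v_i(k)+C_i(k)$ behaves when \emph{both} the apex and the cone vary affinely in $k$ (equivalently, analyze the Klein polyhedron/sail of a $k$-dependent cone), which is exactly the content of the Calegari--Walker theorem and not something that drops out of a Barvinok decomposition of a fixed cone. As written, the proposal has a genuine gap at the ``fixed tangent cone'' step; the rest of the outline depends on it and does not go through.
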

	
	\begin{proof}[Proof of Lemma \ref{std}] We first prove (2) modulo (1). Notice that each $v_j(k)$ depends linearly on $k$ and stays in $V$, thus if $k'>k>M$ and $k'\equiv k$ mod $p$, then $v_j(k')\in v_j(k)+V$. Also notice that any $tk$ is congruent to some $t_0k$ with $1\le t_0\le p$. Hence the vertices of $\conv(\cup_{t\in\mathbb{Z}_+} E_{tk}+V)$ are contained in $F_k=\cup_{t=1}^{p}\{v_j(tk)\}$, and the assertion holds.
		
	Now we prove (1). We may assume all $a_i$'s are non-zero, otherwise we can do a dimension reduction. Let $P$ and $N$ be the set of indices such that $a_i$ is positive or negative respectively. If $P=\emptyset$, then $E_k=\emptyset$ and the problem is trivial, so we also assume $P\neq\emptyset$ in the sequel. Let $\{e_i\}$ be the standard basis of $\mathbb{R}^d$. For any $i\in P$, let $\xi_i(k)=ke_i/a_i\in V_k$.
		
	If $N=\emptyset$, i.e all $a_i>0$, then $V_k$ is the interior of the simplex with vertices $\{\xi_i(k)\}$ and its set of integer points $E_k$ coincides with that of the polyhedron $$\Delta_k\defeq \{(x_1,x_2,\ldots,x_d)\mid x_i\ge1 \}\cap\conv\{\xi_i(k)\}.$$ When $k>\sum_i a_i$, $\Delta_k$ is the (compact) simplex with vertices $$\left\{\sum_{j\neq i}e_j+\frac{k-\sum_{j\neq i}a_j}{a_i}e_i\right\}_{i=1}^d$$ depending linearly on $k$, so our assertion follows from Lemma \ref{QIQ}.
		
	Now also suppose $N\neq\emptyset$, then $V_k=\conv\{\xi_i\mid i\in P\}+V_0$. We first deal with integer points in each $\xi_i+V_0$. Pick $p$ such that $p/a_i\in\mathbb{Z}$, then for $k=tp+k_0$ with $0\le k_0\le p-1$ fixed, $\xi_i+V_0$ is $t(p/a_i)\cdot e_i+C$ where $C=(k_0/a_i)\cdot e_i+V_0$ is a translate of $V_0$ which does not depend on $k$, therefore in this congruence class, the integer hull of $\xi_i+V_0$ is just that of $C$ translated by $t(p/a_i)e_i$, a vector depending linearly on $k$.
		
	If $x\in V_k$ is not contained any $\xi_i+V_0$ (this does not happen for $|P|=1$, so we assume $|P|\ge2$ below), then for each $i\in P$, $x_i\le k/a_i$, hence $x$ lies in $$C_k\defeq V_k\cap\left(\bigcap_{i\in P}\{x\mid x_i\le k/a_i\}\right).$$ The set of integer points in $C_k$ coincides with that in $$Q_k\defeq\{x\in V_k|x_i\ge1,\forall i,\text{ and } x_i\le k/a_i,\forall i\in P\}.$$ $Q_k$ is compact since $1\le x_i\le k/a_i$ for any $i\in P$ and $x\in V_k$ implies $$x_j\le k(|P|-1)/(-a_j)\quad \forall j\in N.$$ To see the vertices of $Q_k$, consider its decomposition into the following level sets: $$Q_k^{(t)}\defeq\left\{x\in Q_k\left|\sum_{i\in P}a_i x_i=t\right.\right\}.$$
	When $k(|P|-1)\ge\sum_{j\in N}(-a_j)$ and $k\ge \sum_{i=1}^d a_i$, the set $Q_k^{(t)}$ is non-empty if and only if $k-\sum_{j\in N}a_j\le t\le k|P|$. For such $t$, one can see that $Q_k^{(t)}$ is the product of
	$$\left\{(x_i)_{i\in P}\left|1\le x_i\le k/a_i,\forall i\in P, \sum_{i\in P} a_ix_i=t\right.\right\}\ \text{(combinatorially a level set of a cube)}$$
	and $$\left\{(x_j)_{j\in N}\left|x_j\ge1,\forall j\in N, \sum_{j\in N}(-a_jx_j)=t-k\right.\right\}\quad \text{(a simplex)}.$$
	From this, we can see that the vertices of $Q_k$ are of the form 
	\begin{eqnarray*}
	&\left.\left.\phantom{\frac{k}{a_i}}\right\{x\ \right|&x_i=1\text{ or }\frac{k}{a_i},\forall i\in P,\text{and }\\
	&&\left.\exists l\in N, s.t.\ x_j=1,\forall j\in N-\{l\}, x_l=\frac{\sum_{i\neq l}a_ix_i-k}{-a_l}\ge1\right\}
	\end{eqnarray*}
	or 
	\begin{eqnarray*}
	&\left.\left.\phantom{\frac{k}{a_i}}\right\{x\ \right|& x_j=1, \forall j\in N,\text{ and }\\
	&&\left.\exists l\in P\ s.t.\ x_i=1\text{ or }\frac{k}{a_i}, \forall i\in P-\{l\}, x_l=\frac{k-\sum_{i\neq l}a_ix_i}{a_l}\in\left[1,\frac{k}{a_l}\right]\right\},
	\end{eqnarray*}
	each depending linearly on $k$, so Lemma \ref{QIQ} applies. Since $V_k$ is the union of $\xi_i+V_0$ ($i\in P$) and $C_k$, and the integer hull of each part has vertices depending linearly on $k\gg1$ in a congruence class, so our assertion follows.
	\end{proof}
	
	Now we can prove
	\begin{lemma}\label{diskfamily}
		There are $M, p\in\mathbb{Z}_+$ such that for each congruence class
		mod $p$, we can find finitely many points $v_j\in V_A$, each depending \emph{linearly} on $k$, such that $\conv(D_k+V_A)=\conv(\{v_j\}+V_A)$ for any $k>M$ in this given congruence class.
	\end{lemma}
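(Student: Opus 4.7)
My plan is to stratify $V_A$ by combinatorial support, decompose each stratum into unimodular rational simplicial cones via Barvinok's theorem, and apply Lemma \ref{std} in the resulting coordinate patches. For each nonempty connected subgraph $E' \subseteq T_2(A)$, let $F(E') = \{v \in V_A : \supp(v) \subseteq E'\}$, a rational polyhedral face of $V_A$ that is independent of $k$; every disk vector has graph-support equal to exactly one such $E'$, so $D_k = \bigsqcup_{E'} D_k(E')$ where $D_k(E') = \mathrm{relint}(F(E')) \cap \mathbb{Z}^N \cap \tilde{h}^{-1}(k\mathbb{Z})$ and the union is over connected nonempty $E'$. Only the divisibility condition $\tilde{h}(v) \in k\mathbb{Z}$ depends on $k$; everything else is fixed combinatorial data.

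Fix such an $E'$ and decompose $F(E')$ into finitely many unimodular rational simplicial cones via Barvinok's theorem, then consider all faces (of all dimensions) of these sub-cones. For each face $C^*$, the generators of $C^*$ determine a subgraph $E^{**} \subseteq E'$ (the union of their supports), and every point in $\mathrm{relint}(C^*)$ has graph-support exactly $E^{**}$. If $E^{**}$ is connected and nonempty, change to the unimodular coordinates of $C^*$ so that $C^*$ becomes $\mathbb{R}^d_{\geq 0}$ with integer lattices identified bijectively, and let $f$ be $\tilde{h}$ in these coordinates. Apply Lemma \ref{std}(2) three times --- once with $f$ to capture $f > 0$, once with $-f$ to capture $f < 0$, and observing that $\{f = 0\} \cap \mathbb{R}^d_{\geq 0}$ is a rational sub-cone whose integer hull has $k$-independent vertices --- to obtain a finite list of candidate lattice points depending linearly on $k$ within each residue class modulo some period $p_{C^*}$, each such candidate lying in $D_k(E^{**}) \subseteq D_k$. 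If $E^{**}$ is disconnected or empty, skip $C^*$ entirely, since no interior point there is a disk vector.

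Collecting all candidates across all connected nonempty $E'$ and all relevant faces $C^*$ yields a finite list $\{v_j\} \subseteq D_k$, each depending linearly on $k$ in its residue class modulo $p := \mathrm{lcm}\,p_{C^*}$ for $k > M := \max M_{C^*}$. Since $\{v_j\} \subseteq D_k$, the inclusion $\conv(\{v_j\} + V_A) \subseteq \conv(D_k + V_A)$ is immediate. Conversely, every $v \in D_k$ lies in $\mathrm{relint}(C^*)$ for some $C^*$ with connected nonempty support, and Lemma \ref{std}(2) ensures $v \in \conv(\{v_j\} + C^*) \subseteq \conv(\{v_j\} + V_A)$, so $D_k \subseteq \conv(\{v_j\} + V_A)$ and hence $\conv(D_k + V_A) \subseteq \conv(\{v_j\} + V_A)$. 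Both inclusions give the claimed equality. The main technical subtlety --- and the place that requires the most care --- is handling the support condition consistently across the unimodular decomposition: one must refine down to all faces (not only top-dimensional sub-cones) to reach every disk vector, and must identify which faces correspond to connected supports so as to avoid polluting $\{v_j\}$ with vectors that are not disk vectors. With the Barvinok decomposition and the finiteness of the number of subgraphs in hand, this is a bookkeeping matter but is the only step beyond a direct invocation of Lemma \ref{std}.
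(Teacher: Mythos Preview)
Your proof is correct and takes essentially the same approach as the paper: decompose $V_A$ into open unimodular simplicial pieces on which the support is constant, apply Lemma~\ref{std} with $f=\pm\tilde h$ (together with the $k$-independent slice $\tilde h=0$) on each piece with connected support, and take the union. The only difference is organizational---you stratify by connected subgraphs $E'$ first and then run Barvinok inside each $F(E')$ (re-examining all faces $C^*$), whereas the paper decomposes $V_A$ directly into open pieces $V(i)$ on which support is automatically constant; your route incurs some harmless redundancy (a face $C^*$ with connected support $E^{**}\subsetneq E'$ is processed once for each connected $E'\supseteq E^{**}$) but the argument and the key lemma invoked are identical.
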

	\begin{proof}
		According to the discussion ahead of Lemma 6.4, we can express $V_A$ as the union of top-dimensional faces (denote them by $V(i)$) of finitely many simplicial \emph{unimodular} (Barvinok's theorem) rational cones, and the intersection of $D_k$ with each $V(i)$ is either empty (when the support is disconnected) or exactly the integer points in $V(i)\cap\tilde{h}^{-1}(k\mathbb{Z})$. Apply Lemma \ref{std} to each $V(i)$ with $f=\tilde{h}$ and $f=-\tilde{h}$ respectively (together with the set $V(i)\cap\tilde{h}^{-1}(0)$ that does not depend on $k$), we see that there are $M,p\in\mathbb{Z}_+$ such that for each congruence class mod $p$, we can find finitely many points $v_j(i)$ such that $\conv(D_k\cap V(i)+V(i))=\conv(\{v_j(i)\}+V(i))$ for any $k>M$ in this given congruence class. This completes the proof by taking the union since there are only finitely many $i$'s.
	\end{proof}
	
	Now we prove Theorem \ref{weakWconj}.
	
	\begin{proof}[Proof of Theorem \ref{weakWconj}]
		It follows from Lemma \ref{diskfamily} that for $k\gg1$ in a fixed congruence class mod $p$, $\kappa_A$ is the minimum of finitely many linear functions each having coefficients in $\mathbb{Q}(k)$. Here $A$ can be any factor group and $k$ is the corresponding $o_i$. Therefore, if we fix the congruence classes of $o_i\gg1$, combining the proof of Theorem \ref{rationality}, $\scl(c_{\bm{o}})$ is determined by minimizing, on a fixed compact convex set $C$, the maximum of finitely many linear functions $f_j$ each having coefficients in $\mathbb{Q}(\bm{o})$. Thus we can find a finite polyhedral decomposition of $C$ with vertices having $\mathbb{Q}(\bm{o})$ coordinates, and $\max_j\{f_j\}$ linear on each piece. It follows that $\scl$ is the minimum of finitely many functions in $\mathbb{Q}(\bm{o})$, i.e. the values of $\max_j\{f_j\}$ on these finitely many vertices, hence $\scl$ is piecewise $\mathbb{Q}(\bm{o})$.
	\end{proof}
		
	\section{Appendix}
	Here we give a proof of equation (\ref{Suv}). For convenience, we use $\#_{s}(w)$ to denote the number of subwords $s$ inside $w$. Let $W_{u,v}$ be the set of cyclic words $w$ in $a,b,c$ such that $w$ contains $u$ copies of each of $ab$, $bc$, $ca$ and $v$ copies of $ac$, $cb$, $ba$ as subwords.
		
	For each $w\in W_{u,v}$, let $f(w)$ be the unique integer such that $w$ can be written as $(abc)^k[a,b]^{f(w)}$ by moving letters around and using $[a,b]=[b,c]=[c,a]$. In Example \ref{diskex}, we defined $S_{u,v}$ to be the image of $W_{u,v}$ under $f$.
	
	In order to prove the equation inductively, we first introduce a way to reduce the computation of $S_{u,v}$ to that of smaller indices. 
	
	For each $w\in W_{u,v}$, the letter $a$ appears $u+v$ times in $w$. For convenience, we make the following:
	\begin{definition}
		An $a$-connecting subword of $w$ is the subword between two consecutive $a$'s in $w$.
	\end{definition}
	For example, if $abcba$ is a subword of $w$, then $bcb$ is an $a$-connecting subword of $w$. We classify all $a$-connecting subwords and divide them into three categories:
	\begin{enumerate}
		\item degree $1$: $b(cb)^k c$ with $k\ge0$;
		\item degree $0$: $b(cb)^k$ or $c(bc)^k$ with $k\ge0$;
		\item degree $-1$: $c(bc)^k b$ with $k\ge0$. 
	\end{enumerate} 
	
	\begin{lemma}\label{reduction}
		If there are two degree $1$ $a$-connecting subwords in $w\in W_{u,v}$, then we can find $w_1,w_2\in W_{u-1,v}$ such that $$f(w_1)\le f(w)\le f(w_2).$$
	\end{lemma}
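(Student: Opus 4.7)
The strategy will be to identify surgeries on the cyclic word $w$ that reduce it to an element of $W_{u-1,v}$ while changing the statistic $f$ in a controlled way. The central algebraic input is that $abc = z^m$ is central in $\mathcal{H}_3(\mathbb{Z})$: deleting three cyclically consecutive letters of $w$ that spell a cyclic shift of $abc$ produces a cyclic word $w'$ satisfying $w = z^m w'$ as group elements, which combined with the normal forms $w = (abc)^{u+v}[a,b]^{f(w)}$ and $w' = (abc)^{u+v-1}[a,b]^{f(w')}$ forces $f(w') = f(w)$. Hence this basic move preserves $f$ exactly, provided the resulting adjacency does not create a forbidden $aa$, $bb$, or $cc$.

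Next I would associate to each degree-1 $a$-connecting subword $T = b(cb)^k c$, together with its flanking $a$'s in $w$, two candidate deletions: remove the $abc$ at the left edge of $aTa$, or remove the $bca$ at its right edge (these coincide when $k=0$). A short case analysis on the last letter of the preceding $a$-connecting subword and the first letter of the following one shows that at least one of these candidates is admissible, the unique exception being the configuration where $k \ge 1$, the previous subword ends in $b$, and the next begins with $c$. In every case with at least one admissible candidate at each of the two given degree-1 subwords, setting $w_1, w_2$ to any two such reductions gives $f(w_1) = f(w_2) = f(w)$, and the desired inequality is immediate.

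The main obstacle, and the heart of the lemma, is handling configurations where both degree-1 subwords are doubly blocked. Here the plan is to combine the basic three-letter deletion with a compensating local rewrite swapping a nearby pair of adjacent non-$a$ letters. Such a rewrite introduces exactly one extra $P$-type or $N$-type adjacency, multiplying the represented element by $z^{\pm n}$ and hence shifting $f$ by $\pm 1$, while the accompanying deletion restores the correct subword counts for $W_{u-1,v}$. The two degree-1 subwords provide two independent opportunities to perform such a correction; by choosing one correction in the $f$-increasing direction and the other in the $f$-decreasing direction, we obtain $w_1, w_2 \in W_{u-1,v}$ with $f(w_1) = f(w) - 1 \le f(w) \le f(w) + 1 = f(w_2)$. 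The delicate point, and the main technical step, is verifying that both signs are realizable in every sub-configuration — which amounts to a finite case check based on the types (degree $-1$, $0$, or $1$) of the $a$-connecting subwords immediately flanking each of the two degree-1 subwords in question.
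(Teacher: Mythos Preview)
Your observation that deleting three consecutive letters spelling a cyclic shift of $abc$ preserves $f$ (and, when no double letter results, lands in $W_{u-1,v}$) is correct and dispatches every case in which at least one of the two degree-$1$ subwords admits such a deletion. The gap is the doubly-blocked case. You defer to an unperformed ``finite case check'' for your swap-and-delete mechanism, but the obvious swaps fail: inside $ab(cb)^kca$ every adjacent non-$a$ pair is flanked by matching letters, so any swap creates a double. Concretely, take the cyclic word $w=(ab\cdot abcbc\cdot ac)^2\in W_{4,2}$: both degree-$1$ subwords are blocked on both sides, every deletion of an $abc$, $bca$, or $cab$ from $w$ produces a forbidden $aa$, $bb$, or $cc$, and every swap of an adjacent $bc$ or $cb$ does likewise. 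It is not clear what local move you intend here, nor why the two subwords --- whose local configurations are identical by symmetry --- would furnish corrections of opposite sign.

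The paper sidesteps this by using both degree-$1$ blocks \emph{together} rather than one at a time. Writing $w=ab(cb)^kc\cdot R\cdot ab(cb)^lc\cdot T$ (with $R,T$ empty or beginning with $a$), one commutes the second block past $R$; since $ab(cb)^lc=a^{-l}(abc)^{l+1}$ is central up to a power of $a$, this costs exactly a power $[a,b]^{\pm l(\#_bR-\#_cR)}$. The two blocks are now adjacent, and the $cab$ between them is \emph{always} safely deletable (it lies between a $b$ and a $c$), yielding $w_1=ab(cb)^{k+l}c\,RT\in W_{u-1,v}$. The symmetric move (commuting the first block rightward past $R$) gives $w_2=R\,ab(cb)^{k+l}c\,T$ with $f$-shift of the opposite sign (coefficient $k$ rather than $l$), so one of $f(w_1),f(w_2)$ lies at or above $f(w)$ and the other at or below, with no case analysis needed.
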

	\begin{proof}
		Up to a cyclic permutation, $w=ab(cb)^k cRab(cb)^l cT$ where $R$ and $T$ are empty words or subwords starting with $a$, and $k,l\ge0$. Recall that $abc$ is in the center and $a$ commutes with $bc$, thus $ab(cb)^l c=a^{-l}(abc)^{l+1}$ and
		$$[ab(cb)^l c,R]=[a^{-l},R]=[a,b]^{l(\#_c(R)-\#_b(R))}.$$
		
		Thus 
		$$w=ab(cb)^kcab(cb)^lcRT\cdot [ab(cb)^l c,R]^{-1}=ab(cb)^kcab(cb)^lcRT\cdot [a,b]^{l(\#_b(R)-\#_c(R))}.$$
		Notice that $ab(cb)^k\underline{cab}(cb)^lcRT$ is still in $W_{u,v}$, and removing the underlined $cab$ which is followed by $c$, we will get a word $w_1=ab(cb)^{k+l}cRT\in W_{u-1,v}$ and $f(w_1)=f(w)+l(\#_b(R)-\#_c(R))$.
		
		Similarly $$w=Rab(cb)^k\underline{cab}(cb)^lcT\cdot [a,b]^{k(\#_c(R)-\#_b(R))},$$
		so $w_2=Rab(cb)^{k+l}cT\in W_{u-1,v}$ and $f(w_2)=f(w)-k(\#_b(R)-\#_c(R))$.
		
		Hence if $\#_b(R)-\#_c(R)\le0$, we are done; otherwise switch $w_1$ and $w_2$.
	\end{proof}
	
	\begin{proof}[Proof of equation (\ref{Suv})]
		Notice the following symmetry: Reading a word $w\in W_{u,v}$ in reverse order gives a word $r(w)\in W_{v,u}$ and $f(r(w))=-f(w)-(u+v)$, thus $$S_{u,v}=-S_{v,u}-u-v.$$
		
		According to Lemma \ref{reduction}, if $w\in W_{u,v}$ has two degree $1$ $a$-connecting subwords in $w$, then $f(w)\in S_{u-1,v}$ assuming that $S_{u-1,v}$ consists of integers in an interval. Similarly by the symmetry above, if $w\in W_{u,v}$ has two degree $-1$ $a$-connecting subwords in $w$, then $f(w)\in S_{u,v-1}$ assuming that $S_{u,v-1}$ consists of integers in an interval.
		
		First assume we have proved the equation for $u=v+1$. We induct over $u-v$ to show that $S_{u,v}=S_{v+1,v}$ whenever $u\ge v+1$. Suppose $w\in W_{u,v}$ with $u>v+1$, notice that an $a$-connecting subword $w_0$ has degree $d$ if and only if 
		$$[\#_{ab}(aw_0a)+\#_{ca}(aw_0a)]-[\#_{ac}(aw_0a)+\#_{ba}(aw_0a)]=2d.$$
		
		Also notice that if we sum the left hand side of the equation above, over all $a$-connecting subwords, we will get $2(u-v)\ge4$. Hence we conclude that there exist two degree $1$ $a$-connecting subwords in $w$, thus $f(w)\in S_{u-1,v}$ since $S_{u-1,v}$ consists of integers in an interval by induction hypothesis. This shows that $S_{u,v}\subset S_{u-1,v}$, but the other inclusion is obvious: adding a copy of $abc$ ahead of a letter $a$ in $w\in S_{u-1,v}$ will result in a new word $w'\in S_{u,v}$ with $f(w')=f(w)$.
		
		Therefore, using the symmetry, we only need to prove the equation for $S_{u,v}$ with $|u-v|\le1$, and we induct on $u+v$. The base cases are easy to check. We now show $$S_{v+1,v}=\left[-\frac{v(v+1)}{2},\frac{v(v-1)}{2}\right]\cap\mathbb{Z}$$
		assuming (\ref{Suv}) holds for all $S_{u',v'}$ with $u'+v'<2v+1$ and $|u'-v'|\le1$.
		
		Consider the following family of words in $W_{v+1,v}$: $$w_k=a(bc)^{k+1}ac(bc)^{v-k}(ac)^{v-1}(ab)^v,\ 0\le k\le v.$$
		A direct computation shows that $f(w_k)=v(v-3)/2+k$. This together with arguments before shows that $S_{v,v}\cup [v(v-3)/2,v(v-1)/2]\subset S_{v+1,v}$, hence by induction hypothesis, $$\left[-\frac{v(v+1)}{2},\frac{v(v-1)}{2}\right]\cap\mathbb{Z}\subset S_{v+1,v}.$$
		
		So we only need to show $$\max(S_{v+1,v})\le v(v-1)/2 \text{ and }\min(S_{v+1,v})\ge -v(v+1)/2.$$ Suppose $w\in W_{v+1,v}$ achieves $\max(S_{v+1,v})\ge f(w_v)=v(v-1)/2$, we see that
		\begin{enumerate}
			\item $w$ does not contain subwords $abca$, $bcab$ or $cabc$, otherwise $f(w)\in S_{v,v}$, which has maximum $v(v-3)/2<v(v-1)/2$ by induction;
			\item $w$ does not contain subwords $acba$, $bacb$ or $cbac$, otherwise $f(w)\in S_{v+1,v-1}$, and $S_{v+1,v-1}=S_{v,v-1}$ has maximum $(v-1)(v-2)/2-v<v(v-1)/2$ by induction;
			\item $w$ does not contain the subword $abaca$, since it can be replaced by $acaba$ to get a new word $w'\in W_{v+1,v}$ with $f(w')>f(w)$;
			\item only one $a$-connecting subword in $w$ has degree $1$, others have degree $0$, otherwise there will be at least two degree $1$ subwords (since the sum of degrees is $1$), which implies (by Lemma \ref{reduction} and induction hypothesis) $f(w)\le\max(S_{v,v})$ contradicting maximality.
 		\end{enumerate}
 		
 		Therefore $w$ must be of the form (up to replacing it by another that also achieves the max) $$w=abc(bc)^kac(bc)^{p_1}a\cdots ac(bc)^{p_s}acabab(cb)^{q_1}ab(cb)^{q_t},$$ where $s,t\ge0$, $k\ge0$ and $p_i,q_j\ge0$. Since $w\in W_{v+1,v}$, we see $s=t=v-1$ and $k+\sum p_i+\sum q_j=v$. A direct computation shows $$w=(abc)(ab)^v(bc)^v(ab)^v[a,b]^e,$$ where $e=vk+\sum (v-i)p_i+\sum jq_j$. Maximizing $f(w)$ is the same as maximizing $e$, which requires $p_i=q_j=0$ and $k=v$. Therefore $w=w_v$ as we constructed and $\max(S_{v+1,v})\le f(w_v)=v(v-1)/2$.
 		
 		Similarly we can show $\min(S_{v+1,v})\ge-v(v+1)/2$, hence (\ref{Suv}) holds for $S_{v+1,v}$, and for $S_{v,v+1}$ by symmetry. 
 		The inductive step for $S_{v,v}$ is completely similar, so we omit it. This completes the proof. 
	\end{proof}
	\bibliographystyle{amsplain}

\begin{thebibliography}{10}
	
	\bibitem{BIntpt} A. Barvinok, \textit{Integer points in polyhedra}, Vol. 452, European Mathematical Society, 2008.
	
	\bibitem{Bavard} C. Bavard, \textit{Longeur stable des commutateurs}, L’Enseign. Math. \textbf{37} (1991), 109--150.
	
	\bibitem{DFPQL} D. Calegari, \textit{Stable commutator length is rational in free groups}, Jour. AMS \textbf{22} (2009), no. 4, 941--961.
	
	\bibitem{DSSS} D. Calegari, \textit{Scl, sails and surgery}, Jour. Topology \textbf{4} (2011), no. 2, 305--326.
	
	\bibitem{DSCL} D. Calegari, \textit{scl}, MSJ Memoirs, \textbf{20}. Mathematical Society of Japan, Tokyo, 2009.
	
	\bibitem{DPL} D. Calegari, \textit{Stable commutator length in subgroups of {\rm PL}$^+(I)$}, Pacific J. Math. \textbf{232} (2007), no.3, 257--262.
	
	\bibitem{CWInthull} D. Calegari and A. Walker, \textit{Integer hulls of linear polyhedra and scl in families}, Trans. Amer. Math. Soc. \textbf{365} (2013), no. 10, 5085--5102.
	
	\bibitem{CWIEND} D. Calegari and A. Walker, \textit{Isometric endomorphisms of free groups}, New York J. Math. \textbf{17} (2011), 713--743.
	
	\bibitem{CWscallop} D. Calegari and A. Walker, {\rm \ttfamily scallop}, computer program available from the authors' webpages, and from computop.org.
	
	\bibitem{SCLGAP} L. Chen, \textit{Spectral gap of scl in free products}, preprint: arXiv:1611.07936.
	
	\bibitem{CFL} M. Clay, M. Forester and J. Louwsma, \textit{Stable commutator length in Baumslag-Solitar groups and quasimorphisms for tree actions}, Trans. Amer. Math. Soc., \textbf{368} (2016), 4751--4785.
	
	\bibitem{DH} A. Duncan and J. Howie, \textit{The genus problem for one-relator products of locally indicable groups}, Math. Z. \textbf{208} (1991), no. 2, 225--237.
	
	\bibitem{Gro} M. Gromov, \textit{Asymptotic invariants of infinite groups}, Geometric group theory, Vol. 2 (Sussex, 1991), LMS Lecture note series, Cambridge Univ. Press, Cambridge, \textbf{182} (1993), 1--295.
	
	\bibitem{SusseSCL} T. Susse, \textit{Stable commutator length in amalgamated free products}, Journal of Topology and Analysis, \textbf{7} (2015), no. 4, 693--717.
	
	\bibitem{Susse} T. Susse, unpublished computations.
	
	\bibitem{AWscylla} A. Walker, \textit{Stable commutator length in free products of cyclic groups}, Experimental Math \textbf{22} (2013), no. 3, 282--298.
	
	\bibitem{ZIRRSCL} D. Zhuang, \textit{Irrational stable commutator length in finitely presented groups}, J. Mod. Dyn. \textbf{2} (2008), no. 3, 497--505.
\end{thebibliography}

\end{document}